\newtheorem{prop}{Proposition}[section]
\newtheorem{teo}[prop]{Theorem}
\newtheorem{lem}[prop]{Lemma}
\newtheorem{cor}[prop]{Corollary}
\theoremstyle{definition}
\newtheorem{defi}[prop]{Definition}
\newtheorem{example}[prop]{Example}
\newtheorem{rmk}[prop]{Remark}
\renewenvironment{itemize}{
  \begin{list}{}{
    \setlength{\leftmargin}{2em}
    \setlength{\itemsep}{0.25em}
    \setlength{\parskip}{0pt}
    \setlength{\parsep}{0.25em}
  }
}{
  \end{list}
}
\newcommand{\jb}{\begin{picture}(-1,1)(1,-3)\circle*{2.5}\end{picture}}
\newcommand{\Ho}{\mathrm{Ho}}
\newcommand{\Hom}{\mathrm{Hom}}
\newcommand{\Ker}{\mathrm{Ker }}
\newcommand{\Coker}{\mathrm{Coker }}
\newcommand{\Img}{\mathrm{Im }}
\newcommand{\Dec}{\mathrm{Dec}}
\newcommand{\pb}{\ar@{}[dr]|{\mbox{\LARGE{$\lrcorner$}}}}
\newcommand{\xra}[1]{\xrightarrow{#1}}
\newcommand{\lra}{\longrightarrow}
\newcommand{\du}{\vee}
\newcommand{\wt}{\widetilde}
\newcommand{\wh}{\widehat}
\newcommand{\ov}[1]{\overline{#1}}
\newcommand{\gris}{\cellcolor{black!8}}
\newcommand{\pdga}[1]{\widehat{\mathcal{P}}\mathsf{CDGA}_{#1}}
\newcommand{\pga}[1]{\widehat{\mathcal{P}}\mathsf{CGA}_{#1}}
\newcommand{\GMpdga}[1]{{\mathcal{P}}\mathsf{CDGA}_{#1}}
\newcommand{\CC}{\mathbb{C}}
\newcommand{\PP}{\mathbb{P}}
\newcommand{\QQ}{\mathbb{Q}}
\newcommand{\ZZ}{\mathbb{Z}}
\newcommand{\Aa}{\mathcal{A}}
\newcommand{\Ii}{\mathcal{I}}
\newcommand{\Jj}{\mathcal{J}}
\newcommand{\Mm}{\mathcal{M}}
\newcommand{\Pp}{\mathcal{P}}
\newcommand{\Vv}{\mathcal{V}}
\newcommand{\Xx}{\mathcal{X}}
\newcommand{\kk}{\mathbf{k}}
\title
[MHS on the intersection homotopy type of varieties with isolated singularities]
{Mixed Hodge structures on the intersection homotopy type of complex varieties with isolated singularities}
\author{David Chataur}
\address[D. Chataur]{Laboratoire Ami\'{e}nois de Math\'{e}matique Fondamentale et Appliqu\'{e}e\\  Universit\'{e} de Picardie Jules Verne\\ 
33, rue Saint-Leu 80039 Amiens Cedex 1, France}
\email{david.chataur@u-picardie.fr}
\author{Joana Cirici}
\address[J. Cirici]{
Fachbereich Mathematik und Informatik\\
Freie Universit\"{a}t Berlin\\  Arnimallee 3\\ 
14195 Berlin, Germany}
\email{jcirici@math.fu-berlin.de}
\thanks{The second-named author would like to acknowledge financial support from
the German Research Foundation (SPP-1786) and partial support from
the Spanish Ministry of Economy and Competitiveness (MTM2013-42178-P)}
\subjclass[2010]{55P62, 55N33, 32S35.}
\keywords{Intersection cohomology, rational homotopy,
mixed Hodge theory, weight spectral sequence,
formality, isolated singularities.}
\begin{document}
\maketitle

\begin{abstract}
A homotopical treatment of intersection 
cohomology recently developed by Chataur-Saralegui-Tanr\'{e} 
associates a \textit{perverse algebraic model} to every topological pseudomanifold,
extending Sullivan's presentation of rational homotopy to intersection cohomology.
In this context, there is a notion of \textit{intersection-formality}, 
measuring the vanishing of Massey products in intersection cohomology.
In the present paper, we study the perverse algebraic model of 
complex projective varieties with isolated singularities.
We endow such invariant with natural mixed Hodge structures.
This allows us to prove some intersection-formality results
for large families of complex projective varieties, 
such as isolated surface singularities and varieties of arbitrary dimension with
ordinary isolated singularities.
\end{abstract}

\section{Introduction}
The intersection cohomology 
of a complex projective variety
enjoys many of the good properties of the
ordinary cohomology of a smooth variety,
collectively known as the \textit{K\"{a}hler package} 
(Poincar\'{e} duality, weak and hard Lefschetz,
Hodge decomposition and Hodge signature theorem).
These properties deal primarily with the
intersection cohomology group 
that has attracted most of the attention from algebraic topologists and geometers:
the middle-perversity intersection cohomology group.
However, there is additional geometric information carried by other
intersection cohomology groups, as well as
by cohomological operations that are defined when allowing other perversities than the middle one
(such as cup products or Steenrod squares).
It is in this context, that Goresky raised the following question in the introduction of \cite{Goresky}:
\begin{quotation}
{\small ``It remains as open question whether there is an intersection 
homology-analogue
to the rational homotopy theory of Sullivan. For example,
one would like to know when Massey triple products are defined
in intersection homology
and whether they always vanish on a (singular) projective algebraic variety''.}
\end{quotation}
The first part of Goresky's question has been answered by Chataur-Saralegi-Tanr\'{e}
in the foundational work \cite{CST} on rational intersection homotopy theory,
where the \textit{perverse algebraic model} 
of a topological pseudomanifold is introduced. This is 
a perverse commutative differential graded algebra (perverse cdga for short) defined over the rationals,
whose cohomology is isomorphic to the rational
intersection cohomology with all perversities
and is, when forgetting its multiplicative structure, quasi-isomorphic to the intersection cochains
originally introduced by
Goresky and MacPherson \cite{GMP1,GMP2}. In general, the perverse algebraic model 
contains more information than the intersection cohomology ring (for instance, it contains the Massey products) and
gives rise to a well-defined notion of intersection-formality for 
topological pseudomanifolds, analogously to the notion of formality 
appearing in the classical rational homotopy theory of Sullivan \cite{Su}.

Other significant contributions in this direction are the homotopy theory of perverse cdga's
developed by Hovey \cite{Hov2} within the context of Quillen model categories,
the works of Friedman \cite{Friedman} and Friedman-McClure \cite{FMC} 
on intersection pairings and cup products in intersection cohomology respectively
and Banagl's theory of intersection spaces \cite{Banagl}.

The present work draws its main motivation from the second
part of Goresky's question, which is almost equivalent to asking whether
singular complex projective varieties are intersection-formal.
This question is legitimated by a well-known application of the Hodge decomposition to topology: the Formality Theorem 
of Deligne-Griffiths-Morgan-Sullivan \cite{DGMS}, which states that
the rational homotopy type of a compact K\"{a}hler manifold
is entirely determined by its cohomology ring.

In general, the Hodge decomposition on the intersection cohomology of a singular projective variety
fails for perversities other than the middle one. Instead, 
each intersection cohomology group carries a mixed Hodge structure. Since the perverse algebraic model
depends on all perversities, we do not
expect an intersection-analogous statement of the Formality Theorem for singular projective varieties,
but of a generalization of this statement involving the weight spectral sequence.

In this paper, we study the rational intersection homotopy type
of complex projective varieties with only isolated singularities,
via mixed Hodge theory.
\\

We next explain the contents and main results of this paper. For the rest of this introduction, let $X$ be a complex
projective variety with only isolated singularities.

In Section $\ref{Section_RIHT}$, we
collect preliminary definitions and results
on intersection cohomology and on the homotopy theory of perverse cdga's.
Following \cite{CST}, we
describe the perverse algebraic 
model $I\Aa_{\ov\bullet}(X)$ of $X$.
This can be computed from the morphism
of rational algebras of piece-wise linear forms $\Aa_{pl}(X_{reg})\to \Aa_{pl}(L)$
induced by the inclusion $L\hookrightarrow X_{reg}$
of the link $L$ of the singularities into the regular part of $X$.

Section $\ref{Section_MHS}$ is the core of this paper. 
In this section, we endow the perverse algebraic model $I\Aa_{\ov\bullet}(X)$
of $X$ with natural mixed Hodge structures 
(this result is stated in a more technical form in Theorem $\ref{MHSmodel}$). Our proof relies, first, on the existence of 
mixed Hodge structures on the rational homotopy types of $X_{reg}$ and $L$ due to Morgan \cite{Mo} and 
Durfee-Hain \cite{DH} respectively, and second, on the existence of relative models of mixed Hodge diagrams
proven by Cirici-Guill\'{e}n in \cite{CG1}. 
As an important application of the existence of mixed Hodge structures on the perverse algebraic model,
we study the \textit{perverse weight spectral sequence} 
$IE_{1,\ov \bullet}^{*,*}(X)$,
a perverse differential bigraded algebra
whose cohomology computes the weight filtration on the intersection cohomology:
$IE_{2,\ov \bullet}^{*,*}(X):=H^{*,*}(IE_{1,\ov \bullet}(X))\cong Gr_{\bullet}^WIH^*_{\ov\bullet}(X;\QQ)$.
In Theorem $\ref{IE1formality}$, we prove that 
the complex intersection homotopy type of $X$ is a direct consequence of 
its perverse weight spectral sequence. In other words: there is a string of quasi-isomorphisms
of perverse cdga's from
$I\Aa_{\ov\bullet}(X)\otimes\CC$ to 
$IE_{1,\ov \bullet}(X)\otimes\CC$. This result descends to the rationals for perverse cdga's of finite type 
and is the intersection-analogue of the main result of \cite{CG1},
which in turn is the generalization to singular varieties,
of the Formality Theorem of \cite{DGMS}. 
As in the classical setting,
the perverse weight spectral sequence can be described 
in terms of the cohomologies of varieties associated with a resolution of singularities of $X$.
Hence Theorem $\ref{IE1formality}$ implies that the complex intersection homotopy type of
$X$ has a finite-dimensional model, determined by
cohomologies of smooth projective varieties.

The last two sections contain applications of Theorem $\ref{IE1formality}$.
In Section $\ref{Section_OIS}$, we prove
that if $X$
admits a resolution of singularities 
in such a way that the exceptional divisor is smooth, and if the link
of each singular point is $(n-2)$-connected, where $n$ is the complex dimension of $X$, then $X$ is GM-intersection-formal
over $\CC$ (the prefix GM accounts for Goresky-MacPherson,
since we consider finite perversities only).
The main class of examples to which this result applies are varieties with 
ordinary multiple points, but it also applies to a large family of
hypersurfaces and more generally, to complete
intersections
admitting  a resolution of singularities with smooth exceptional divisor. 
This extends a result of \cite{CST}, where it is shown that any
nodal hypersurface of $\CC\PP^4$ is intersection-formal.
Likewise, in Section $\ref{Section_OIS}$ we prove GM-intersection-formality over $\CC$
for every isolated surface singularity.
If a variety is (GM)-intersection-formal,
then its normalization is formal in the classical sense.
We remark that these results generalize our previous work
\cite{ChCi1}, where we study the (classical) rational homotopy type of complex 
projective varieties with normal isolated singularities,
using the multiplicative weight spectral sequence.

\section{Rational intersection homotopy types}\label{Section_RIHT}
In this preliminary section, we recall the description 
of the intersection cohomology of a complex projective variety with only isolated singularities
appearing in \cite{GMP1}, as well as its main properties. Then, we
introduce the notion of rational intersection homotopy equivalence and study its relation with
the classical notion of rational homotopy equivalence.
Lastly, we collect the necessary definitions and results
on the homotopy theory of perverse differential graded algebras,
such as the intersection-analogous notions of quasi-isomorphism and formality,
and describe the perverse algebraic model of a complex
projective variety with only isolated singularities, following \cite{CST}.

\subsection{Intersection cohomology}\label{sectionintersectioncohomology}
Intersection cohomology is defined for any topological pseudomanifold and
depends on the choice of a multi-index called \textit{perversity},
which measures how far cycles are allowed to deviate from transversality.
For a complex projective variety of dimension $n$ having only isolated singularities,
a perversity $\ov p$ is determined by a single integer $p$ such that
$0\leq p\leq 2n-2$. We will denote by $\Pp$ the totally ordered set of such perversities.
There are three distinguished elements in $\Pp$ that we shall refer to:
the $\ov{0}$-perversity $\ov{0}=0$, the middle perversity $\ov{m}=n-1$
and the top perversity $\ov{t}=2n-2$. The complementary perversity of $\ov p\in\Pp$ is given by $\ov{t}-\ov{p}:=2n-2-p$.
Note that the middle perversity is complementary to itself.
We enlarge the set of perversities $\widehat{\Pp}=\Pp\cup \{\ov{\infty}\}$ 
by adjoining the $\ov{\infty}$-perversity. We define
the sum of two perversities $\ov p$ and $\ov q$ in $\widehat{\Pp}$ by letting
$\ov p+\ov q:=\ov{p+q}$ if $p+q\leq 2n-2$ and $\ov p+\ov q:=\ov\infty$ otherwise.

Let $X$ be a complex projective variety of dimension $n$ with only isolated singularities. Denote by
$\Sigma$ the singular locus of $X$ and by $X_{reg}:=X-\Sigma$ its regular part.
The intersection cohomology of $X$ with perversity $\ov p$ (and coefficients in a commutative ring $R$) is given by
(see $\S$6.1 of \cite{GMP1})
$$
IH^k_{\ov{p}}(X;R)=\left\{
\begin{array}{ll}
H^k(X_{reg};R)&\text{ if }k\leq p\\
\Img\left(H^k(X;R)\lra H^k(X_{reg};R)\right)&\text{ if }k=p+1\\
H^k(X;R)&\text{ if }k>p+1
\end{array}
\right..
$$

For the $\ov{0}$-perversity we have an isomorphism of graded algebras $IH^*_{\ov0}(X;R)\cong H^*(\overline{X};R)$, 
where $\overline{X}\to X$ is a normalization of $X$
(see $\S$4 of \cite{GMP1}). 
For the $\ov\infty$-perversity we recover the cohomology ring $IH^*_{\ov{\infty}}(X;R)\cong H^*({X}_{reg};R)$ 
of the regular part of $X$ (see \cite{CST}).
A main feature of intersection cohomology is that, when $R=\QQ$,
for every finite perversity $\ov p\in \Pp$ we have a Poincar\'{e} duality isomorphism (see $\S$3.3 of \cite{GMP1})
$$IH^k_{\ov{p}}(X;\QQ)\cong (IH^{2n-k}_{\ov{t}-\ov{p}}(X;\QQ))^\du:=\Hom(IH^{2n-k}_{\ov{t}-\ov{p}}(X;\QQ),\QQ).$$

The graded objects $IH_{\ov p}^*(X;R)$ together with the morphisms
$IH_{\ov p}^*(X;R)\lra IH_{\ov q}^*(X;R)$ for every pair $\ov p\leq \ov q$,
and the products
$IH_{\ov p}(X;R)\otimes IH_{\ov q}(X;R)\lra IH_{\ov p+\ov q}(X;R)$ 
induced by the cup products of $H^*(X;R)$ and $H^*(X_{reg};R)$
for any pair $\ov p,\ov q\in \widehat \Pp$,
constitute the prototypical example of a \textit{perverse commutative graded $R$-algebra}:
this is a commutative monoid in the category
of functors from $\widehat\Pp$ to the category of graded $R$-modules.

Denote by $\Vv_\CC$ the category whose objects are complex projective varieties with only isolated singularities 
and whose morphisms $f:X\lra Y$ satisfy $f(X_{reg})\subset Y_{reg}$.
The above formula defines a contravariant functor $IH_{\ov\bullet}^*(-;R):\Vv_\CC\lra \pga{R}$
with values in the category of perverse commutative graded $R$-algebras.

\subsection{Intersection homotopy equivalence}
The consideration of the intersection cohomology ring with all perversities
leads to a natural notion of rational intersection homotopy equivalence.

\begin{defi}Let
$f:X\lra Y$ be a morphism between simply connected topological pseudomanifolds, such that $f(X_{reg})\subset Y_{reg}$.
Then $f$ is said to be a \textit{rational intersection homotopy equivalence}
if it induces an isomorphism of perverse graded algebras $f^*:IH^*_{\ov \bullet}(Y;\QQ)\lra IH^*_{\ov \bullet}(X;\QQ)$.
\end{defi}

If $f:X\to Y$ is a rational intersection homotopy equivalence then the morphism induced on
the normalizations $\overline f:\overline X\lra \overline Y$ is a rational
homotopy equivalence. The following result exhibits how the notion of rational
intersection homotopy equivalence is stronger than the classical notion of rational homotopy equivalence.

\begin{prop}\label{proj_cones_htp_equiv}
Let $S$ and $S'$ be two simply connected smooth projective surfaces of $\CC\PP^n$. 
Denote by $\PP_cS$ and $\PP_cS'$ the projective cones of $S$ and $S'$ respectively.
Then:
\begin{enumerate}[(1)]
 \item   $\PP_cS$ and $\PP_cS'$ are rationally homotopy equivalent if and only if $\Xx(S)=\Xx(S')$.
 \item  $\PP_cS$ and $\PP_cS'$ are rationally intersection homotopy equivalent if and only if $S$ and $S'$ 
 are rationally homotopy equivalent.
\end{enumerate}
\end{prop}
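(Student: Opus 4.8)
The plan is to express everything through the regular part of $\PP_cS$ and the link of its vertex. For a smooth surface $S\subset\CC\PP^n$ (with $S$ not a linear subspace, so that $\PP_cS$ is genuinely singular), $\PP_cS\subset\CC\PP^{n+1}$ has a single isolated singularity at the vertex $v$; its regular part $(\PP_cS)_{reg}=\PP_cS\smallsetminus\{v\}$ is the total space of a line bundle $\mathcal{L}$ on $S$ with $c_1(\mathcal{L})$ a nonzero multiple of the hyperplane class $h\in H^2(S;\QQ)$, hence deformation retracts onto the zero section $S$; the link $L$ of $v$ is a circle bundle over $S$ whose Euler class is again a nonzero multiple of $h$; and the inclusion $L\hookrightarrow(\PP_cS)_{reg}$ is homotopic to the bundle projection $\pi\colon L\to S$. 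In particular $\PP_cS$ is simply connected (van Kampen), so both notions of equivalence apply, and by the description in Section~\ref{Section_RIHT} the perverse algebraic model $I\Aa_{\ov\bullet}(\PP_cS)$ is built from $\pi^*\colon\Aa_{pl}(S)\to\Aa_{pl}(L)$; since $S$ is formal (\cite{DGMS}) this morphism may be replaced by the inclusion $H^*(S)\hookrightarrow\bigl(H^*(S)\otimes\Lambda(t),D\bigr)$, $|t|=1$, $Dt=h$, the standard relative model of $\pi$.

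For part~(1) I would first compute $H^*(\PP_cS;\QQ)$: using that $\PP_cS$ is homotopy equivalent to the Thom space of $\mathcal{L}$ together with the Thom isomorphism, one obtains, as graded rings, $H^*(\PP_cS;\QQ)\cong\QQ[\eta]/(\eta^4)\oplus V$, where $\eta$ has degree $2$, $V\subset H^4$ is a complement of $\langle\eta^2\rangle$ of dimension $b_2(S)-1$, and $V\cdot H^{>0}=0$. As $\Xx(S)=b_2(S)+2$ for a simply connected surface, this ring is determined by $\Xx(S)$; in particular $\PP_cS\simeq_\QQ\PP_cS'$ forces $\Xx(S)=\Xx(S')$. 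For the converse I would produce a rational equivalence $\CC\PP^3\vee\bigvee_{\Xx(S)-3}S^4\xrightarrow{\ \sim\ }\PP_cS$, built from a map out of $\CC\PP^3$ realizing $\eta$ and maps $S^4\to\PP_cS$ realizing a basis of $V$ (these exist because $\pi_4(\PP_cS)\otimes\QQ$ maps onto the indecomposables of $H^4$, all higher Massey products vanishing for degree reasons); it induces an isomorphism on $\QQ$-cohomology between simply connected spaces. Hence $\PP_cS$ is formal with rational homotopy type depending only on $\Xx(S)$, which proves~(1).

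For part~(2), the forward implication uses only the identification $IH^*_{\ov\infty}(X;\QQ)\cong H^*(X_{reg};\QQ)$: a rational intersection homotopy equivalence $f\colon\PP_cS\to\PP_cS'$ restricts to a map $(\PP_cS)_{reg}\to(\PP_cS')_{reg}$ which, under the isomorphism of perverse graded algebras evaluated at $\ov\infty$, induces a $\QQ$-cohomology isomorphism $H^*(S';\QQ)\to H^*(S;\QQ)$; since $S$ and $S'$ are simply connected this is a rational homotopy equivalence, and a zig-zag of intersection equivalences likewise yields $S\simeq_\QQ S'$. For the converse I would start with a rational homotopy equivalence $g\colon S\to S'$ and use that, $S$ being formal, every graded-algebra automorphism of $H^*(S;\QQ)$ is realized by a rational self-equivalence, these automorphisms acting on $H^2(S;\QQ)$ through the similitude group of the intersection form. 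Composing $g$ with a suitable self-equivalence, one arranges $g^*h'=\mu h$ for some $\mu\in\QQ^\ast$; then $g$ is compatible (after rescaling $t$) with the defining diagrams $\{L\to(\PP_cS)_{reg}\}$ and $\{L'\to(\PP_cS')_{reg}\}$, hence induces a weak equivalence of perverse algebraic models and in particular an isomorphism of perverse graded algebras $IH^*_{\ov\bullet}(\PP_cS')\cong IH^*_{\ov\bullet}(\PP_cS)$.

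The step I expect to be the main obstacle is this last point of part~(2): showing that the lines spanned by $g^*h'$ and by $h$ lie in a common orbit of the action of $\mathrm{Aut}(H^*(S;\QQ))$ on $\PP(H^2(S;\QQ))$. This requires genuine input on simply connected projective surfaces — the classification of their intersection forms together with Witt's theorem for quadratic forms over $\QQ$ — and it is precisely the point at which the intersection statement is strictly stronger than the classical one, the ordinary rational homotopy type of $\PP_cS$ (a wedge $\CC\PP^3\vee\bigvee S^4$) having already forgotten the hyperplane class.
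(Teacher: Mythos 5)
Your parts (1) and (2)$(\Rightarrow)$ follow the paper's route. For (1) the paper makes the same Thom--space computation, obtaining $H^*(\PP_cS;\QQ)\cong\QQ\langle Th\rangle\oplus V'$ with $Th^4=0$, $Th\cup V'=0$ and $\dim V'=b_2(S)-1=\Xx(S)-3$; your map from $\CC\PP^3\vee\bigvee S^4$ supplies the formality of $\PP_cS$ that the paper leaves implicit (one can also get it from purity of the weight filtration on $H^*(\PP_cS;\QQ)$ via the criterion of Section~\ref{Section_MHS}). For (2)$(\Rightarrow)$ you evaluate the perverse algebra at $\ov\infty$ to recover the ring $H^*(X_{reg};\QQ)\cong H^*(S;\QQ)$, whereas the paper recovers the intersection form of $S$ from the product $IH^2_{\ov m}\otimes IH^2_{\ov m}\to IH^4_{\ov t}\cong H^4(S;\QQ)$; both arguments then use formality of simply connected surfaces, and both are correct.

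The genuine gap is exactly where you put it, in (2)$(\Leftarrow)$, and the Witt--theorem plan you sketch does not close it. An isomorphism of perverse graded algebras must commute with the poset map $IH^2_{\ov 0}(\PP_cS)=\QQ\langle Th\rangle\to IH^2_{\ov m}(\PP_cS)=H^2(S;\QQ)$, $Th\mapsto w$, and with the product into $IH^4_{\ov t}$; it therefore amounts to a similitude $\Psi\colon H^2(S;\QQ)\to H^2(S';\QQ)$ of some factor $\lambda$ with $\Psi(w)=\alpha w'$, forcing $\lambda\deg S=\alpha^2\deg S'$. Witt's theorem moves anisotropic vectors of \emph{equal} norm, but it cannot change the square class of $q(w)$; and when $b_2(S)$ is odd the determinant shows every similitude factor between isomorphic forms is a square, so one would need $\deg S'/\deg S\in(\QQ^*)^2$ --- a condition with no relation to the rational homotopy type of $S$. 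Concretely, the degree $7$ del Pezzo surface ($b_2=3$) embedded anticanonically and the same surface embedded by $|{-K}+H|$ (degree $14$) give cones whose perverse graded algebras are \emph{not} isomorphic, although the two base surfaces coincide. So the orbit problem you isolate is not merely technical: as stated, the converse requires either an extra hypothesis comparing the hyperplane classes of the two embeddings, or a weakening of the equivalence that forgets the poset map from perversity $\ov 0$. Be aware that the paper's own proof of (2) only substantiates the forward implication (the perverse algebra determines the intersection form of $S$) and is silent on this converse, so you have located a real weak point rather than an omitted routine step.
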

\begin{proof}
Let $w\in H^2(S;\QQ)$ denote the Poincar\'{e} dual of the
hyperplane section of $S\subset\CC\PP^n$. Since $w^2\neq 0$, using
Poincar\'{e} duality we obtain an orthogonal decomposition 
$H^2(S;\QQ)\cong\QQ\langle w\rangle \oplus^\bot V.$
The projective cone $\PP_cS$ of $S$ is isomorphic to
the Thom space of the restriction $S(1)$ of the hyperplane bundle on $\CC\PP^n$ to $S$.
The rational cohomology algebra of $\PP_cS$ 
can be written as $H^*(\PP_cS;\QQ)\cong \QQ\langle Th\rangle \oplus V'$,
where $Th$ has degree $2$ and satisfies $Th^4=0$ and $V'$ is a vector space of degree 4.
Thom's isomorphism $\cup Th: H^*(S;\QQ)\to \wt H^*(\PP_cS;\QQ)$ identifies $w$ with $Th^2$
and $V$ with $V'$. Furthermore, $Th\cup V'=0$. This proves (1).
The intersection cohomology of $\PP_cS$ can be written as:
\begin{equation*}
IH^s_{\ov p}(\PP_cS;\QQ)\cong
\def\arraystretch{1.4}
\begin{array}{| c | c | c | }
\multicolumn{1}{c}{\text{\tiny{$\ov p=\ov 0$}}}&\multicolumn{1}{c}{\text{\tiny{$\ov p=\ov m$}}}&\multicolumn{1}{c}{\text{\tiny{$\ov p=\ov t$}}}\\
\hline
\QQ\langle Th^3\rangle   &\QQ\langle Th^3\rangle&\QQ\langle Th^3\rangle  \\ \hline
0&0&0\\ \hline
\QQ\langle Th^2\rangle\oplus V'  &\QQ\langle Th^2\rangle\oplus V'  & H^4(S;\QQ)   \\ \hline
0&0&0  \\ \hline
\QQ\langle Th\rangle  &\QQ\langle w\rangle \oplus V&   \QQ\langle w\rangle \oplus V                   \\\hline
0  &0&0                     \\\hline
\QQ  &\QQ&\QQ                         \\\hline
\end{array}
\def\arraystretch{1.4}
\begin{array}{ l }
\\
\text{\tiny{$s=6$}}\\
\text{\tiny{$s=5$}}\\
\text{\tiny{$s=4$}}\\
\text{\tiny{$s=3$}}\\
\text{\tiny{$s=2$}}\\
\text{\tiny{$s=1$}}\\
\text{\tiny{$s=0$}}
\end{array}
\end{equation*}

where the product $IH_{\ov m}^2(\PP_cS;\QQ)\otimes IH_{\ov m}^2(\PP_cS;\QQ)\lra IH_{\ov t}^4(\PP_cS;\QQ)\cong H^4(S;\QQ)=\QQ$
corresponds to the product on $H^2(S;\QQ)$ and determines the signature of $S$.
This proves (2).
\end{proof}

\begin{example}
Let $S$ be a K3-surface and let $S'$ be the projective plane blown-up at 19 
points. Then $\Xx(S)=\Xx(S')=24$, $Sign(S)=(3,19)$ and $Sign(S')=(1,21)$.
Therefore $\PP_cS$ and $\PP_cS'$ are rationally homotopy equivalent, but not rationally 
intersection homotopy equivalent.
\end{example}

\subsection{Integral intersection cohomology}
We prove an analogous statement of Proposition $\ref{proj_cones_htp_equiv}$
for intersection cohomology with integer coefficients.

\begin{prop}\label{proj_cone_homeo}
Let $S$ and $S'$ be two simply connected smooth projective surfaces of $\CC\PP^n$. 
Then their projective cones $\PP_cS$ and $\PP_cS'$ are homeomorphic if and only if
$IH^*_{\ov \bullet}(\PP_cS;\ZZ)$ and $IH^*_{\ov \bullet}(\PP_cS';\ZZ)$ are isomorphic as perverse graded algebras.
\end{prop}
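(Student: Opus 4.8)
The plan is to extract from the perverse graded algebra $IH^*_{\ov\bullet}(\PP_cS;\ZZ)$ exactly the pair consisting of the graded ring $H^*(S;\ZZ)$ and the hyperplane class $w=c_1(S(1))\in H^2(S;\ZZ)$, where $S(1)$ is the restriction to $S$ of the hyperplane bundle of $\CC\PP^n$, up to isomorphism; and then to turn an isomorphism of such pairs into a homeomorphism of cones by means of the classification of simply connected topological $4$-manifolds. The ``only if'' implication is formal: intersection cohomology, with its perversity structure maps and cup products, is a topological invariant of pseudomanifolds (\cite{GMP1,FMC}), and a homeomorphism $\PP_cS\cong\PP_cS'$ carries manifold points to manifold points, hence restricts to a homeomorphism of regular parts and induces an isomorphism of perverse graded algebras over $\ZZ$.

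For the ``if'' implication I would start, as in the proof of Proposition~\ref{proj_cones_htp_equiv}, from the identification $\PP_cS\cong Th(S(1))$. Then $X_{reg}=\PP_cS\setminus\{v\}$, with $v$ the cone point, is the total space of the line bundle $S(1)$ and deformation retracts onto its zero section $S$ --- unless $\PP_cS$ is smooth, which occurs only when $S=\CC\PP^2$ is linearly embedded, in which case $\PP_cS\cong\CC\PP^3$ and the statement is elementary; this exceptional case is itself detected by the algebra, since $\PP_cS$ is singular if and only if $IH^6_{\ov\infty}(\PP_cS;\ZZ)=0$. Assuming both cones singular, a computation parallel to the table in Proposition~\ref{proj_cones_htp_equiv}, carried out over $\ZZ$, gives: $IH^*_{\ov\infty}(\PP_cS;\ZZ)\cong H^*(X_{reg};\ZZ)\cong H^*(S;\ZZ)$ as graded rings, recovering the intersection form $Q_S$ of $S$; $IH^2_{\ov 0}(\PP_cS;\ZZ)=H^2(\PP_cS;\ZZ)\cong\ZZ$, generated by the Thom class $Th$; and the perverse structure map $IH^2_{\ov 0}(\PP_cS;\ZZ)\to IH^2_{\ov\infty}(\PP_cS;\ZZ)$ is the restriction $H^2(\PP_cS;\ZZ)\to H^2(X_{reg};\ZZ)$, which sends $Th$ to the Euler class of $S(1)$, namely to $w$ under $X_{reg}\simeq S$. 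Hence any isomorphism $IH^*_{\ov\bullet}(\PP_cS;\ZZ)\cong IH^*_{\ov\bullet}(\PP_cS';\ZZ)$ of perverse graded algebras sends a generator of $IH^2_{\ov 0}(\PP_cS;\ZZ)$ to a generator of $IH^2_{\ov 0}(\PP_cS';\ZZ)$, so, by naturality and the $\ov\infty$-identification, it restricts to a graded ring isomorphism $H^*(S;\ZZ)\cong H^*(S';\ZZ)$ carrying $w$ to $\pm w'$; in particular, on second cohomology it matches $Q_S$ with $\pm Q_{S'}$ and $w$ with $\pm w'$.

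To conclude I would invoke Freedman's classification of simply connected closed topological $4$-manifolds (the Kirby--Siebenmann invariants vanish, both $S$ and $S'$ being smooth), in the form that any isomorphism of second cohomology carrying one intersection form to $\pm$ the other is realised by a homeomorphism of the underlying $4$-manifolds. This produces a homeomorphism $f\colon S\to S'$ with $f^*w'=\pm w$. Since complex line bundles on a CW complex are classified by their first Chern class, $f^*S'(1)$ is isomorphic either to $S(1)$ or to its conjugate $\overline{S(1)}$; in either case it has the same underlying real plane bundle as $S(1)$, so $f$ induces a homeomorphism of Thom spaces $\PP_cS=Th(S(1))\cong Th(S'(1))=\PP_cS'$.

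I expect the essential point, rather than any computation, to be the following observation: the ordinary cohomology ring of $\PP_cS$ only remembers $w$ together with the linear functional $a\mapsto w\cdot a$ on $H^2(S;\ZZ)$ --- this is Proposition~\ref{proj_cones_htp_equiv}(1) --- whereas the full intersection form of $S$ becomes visible only through the $\ov\infty$-perversity, where $IH^*_{\ov\bullet}$ collapses to $H^*(X_{reg};\ZZ)$. The residual difficulties are minor bookkeeping: isolating the smooth exceptional case, assembling Freedman's theorem with the line-bundle/Thom-space dictionary, and checking that the sign and orientation ambiguities (in $w$, in the generator of $H^4(S;\ZZ)$, and in the choice of orientations needed to speak of an intersection form) do no harm, since the Thom space, hence $\PP_cS$, sees only the underlying unoriented data, and conjugating $S(1)$ --- equivalently reversing the orientation of $S$ --- leaves $\PP_cS$ unchanged.
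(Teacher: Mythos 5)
Your proof is correct and follows essentially the same route as the paper's: extract from the perverse algebra the intersection form of $S$ together with the class $\pm w$, realise the resulting isometry by a homeomorphism via Freedman's theorem, and conclude through the identification $\PP_cS\cong Th(S(1))$ and the fact that conjugate line bundles have homeomorphic Thom spaces. The only cosmetic difference is that you read off $H^*(S;\ZZ)$ and $w$ from the $\ov\infty$-perversity (the regular part), whereas the paper uses the middle perversity and the pairing $IH^2_{\ov m}\otimes IH^2_{\ov m}\to IH^4_{\ov t}$; your handling of the ``only if'' direction and of the degenerate smooth case is merely more explicit than the paper's.
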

\begin{proof}
We follow the notation of the proof of Proposition $\ref{proj_cones_htp_equiv}$.
The intersection cohomology algebra of $\PP_cS$ is given by:
\begin{equation*}
IH^s_{\ov p}(\PP_cS;\ZZ)\cong
\def\arraystretch{1.4}
\begin{array}{| c | c | c | }
\multicolumn{1}{c}{\text{\tiny{$\ov p=\ov 0$}}}&\multicolumn{1}{c}{\text{\tiny{$\ov p=\ov m$}}}&\multicolumn{1}{c}{\text{\tiny{$\ov p=\ov t$}}}\\
\hline
\ZZ\langle T\rangle , Th^3=\deg(S)\cdot T&\ZZ\langle T\rangle&\ZZ\langle T\rangle  \\ \hline
0&0&0\\ \hline
H^4(\PP_cS;\ZZ)\cong H^2(S;\ZZ) &H^4(\PP_cS;\ZZ)\cong H^2(S;\ZZ) & H^4(S;\ZZ)\cong \ZZ   \\ \hline
0&0&0  \\ \hline
\ZZ\langle Th\rangle  &H^2(S;\ZZ)&     H^2(S;\ZZ)    \\\hline
0  &0&0                     \\\hline
\ZZ  &\ZZ&\ZZ                         \\\hline
\end{array}
\def\arraystretch{1.4}
\begin{array}{ l }
\\
\text{\tiny{$s=6$}}\\
\text{\tiny{$s=5$}}\\
\text{\tiny{$s=4$}}\\
\text{\tiny{$s=3$}}\\
\text{\tiny{$s=2$}}\\
\text{\tiny{$s=1$}}\\
\text{\tiny{$s=0$}}
\end{array}
\end{equation*}
The morphism 
$$H^2(\PP_sS;\ZZ)\cong IH_{\ov 0}^2(\PP_sS;\ZZ)\lra IH_{\ov m}^2(\PP_sS;\ZZ)\cong H^2(S;\ZZ)$$
determines up to sign a class $\pm w\in H^2(S;\ZZ)$ given by the image of a generator of
$H^2(\PP_sS;\ZZ)$. We get line bundles $L_s^+$ and $L_s^-$ over $S$
satisfying $c_1(L_S^\pm)=\pm w$. Since these two bundles are isomorphic as rank 2 vector bundles,
their Thom spaces $Th(L_S^\pm)\cong \PP_cS$ are homeomorphic.

Assume that we have an isomorphism $\Psi:IH_{\ov\bullet}(\PP_cS;\ZZ)\lra IH_{\ov\bullet}(\PP_cS';\ZZ)$.
Then the intersection forms of $S$ and $S'$ are equivalent, and it follows form Freedman's 
Theorem that $S$ and $S'$ are homeomorphic.
From the commutative diagram
$$
\xymatrix{
\ar[d]_\Psi IH_{\ov 0}^2(\PP_cS;\ZZ)\ar[r]&IH_{\ov m}^2(\PP_cS;\ZZ)\ar[d]^\Psi\\
IH_{\ov 0}^2(\PP_cS';\ZZ)\ar[r]&IH_{\ov m}^2(\PP_cS';\ZZ)
}
$$
we deduce that $\PP_cS$ and $\PP_cS'$ are homeomorphic.
\end{proof}

\begin{example}
Let $S$ be a surface of degree $4$ in $\CC\PP^3$, let $S'$ be the intersection of a quadric
 and a cubic in $\CC\PP^4$, and let $S''$ be the intersection of three quadrics in $\CC\PP^5$.
All three surfaces are examples of K3-surfaces with different intersection cohomology algebras.
Hence their projective cones are non-homeomorphic.
\end{example}

Let $S$ be a simply-connected 4-dimensional
smooth manifold and let $w\in H^2(S;\ZZ)$.
To such a pair $(S,\pm w)$  one can associate two homeomorphic Thom spaces
$Th(L_w^\pm)$.
The proof of Proposition $\ref{proj_cone_homeo}$ is easily generalized 
to this setting. We have:

\begin{prop}\label{equiv_pairs}
 Let $(S,\pm w)$ and $(S',\pm w')$ be two pairs. The following are equivalent:
 \begin{enumerate}[(1)]
  \item   The pairs are topologically equivalent: there is a homeomorphism $\phi:S\to S'$ 
such that $\phi^*(w)=w'$. 
  \item  The line bundles $L_w^\pm$ and $L_{w'}^\pm$ are isomorphic as real vector bundles.
  \item The Thom spaces $Th(L_w^\pm)$ and $Th(L_{w'}^\pm)$ are homeomorphic.
  \item  The integral intersection cohomologies $IH^*_{\ov\bullet}(Th(L_w^\pm);\ZZ)$ and
  $IH^*_{\ov\bullet}(Th(L_{w'}^\pm);\ZZ)$ are isomorphic as perverse graded algebras.
 \end{enumerate}
\end{prop}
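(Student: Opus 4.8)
The plan is to prove the cycle of implications $(1)\Rightarrow(2)\Rightarrow(3)\Rightarrow(4)\Rightarrow(1)$, running parallel to the proof of Proposition~\ref{proj_cone_homeo}. The first two implications are formal: complex line bundles over the finite CW-complex $S$ are classified by their first Chern class, so a homeomorphism $\phi\colon S\to S'$ with $\phi^*w'=w$ pulls $L_{w'}^+$ back to $L_w^+$, and since $S$ is simply connected the unoriented real $2$-plane bundle underlying $L_w^\pm$ is determined by $\pm w$; moreover isomorphic vector bundles have canonically homeomorphic Thom spaces. For $(3)\Rightarrow(4)$, intersection cohomology with all perversities, together with the perverse graded algebra structure recalled in \S\ref{sectionintersectioncohomology}, is a topological invariant.

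The content of the statement is $(4)\Rightarrow(1)$. I would first compute $IH^*_{\ov\bullet}(Th(L_w^\pm);\ZZ)$ by copying the computation in the proof of Proposition~\ref{proj_cone_homeo} with $\PP_cS$ replaced by $Th(L_w^\pm)$: the regular part of this $6$-dimensional pseudomanifold is the total space of $L_w^\pm$, homotopy equivalent to $S$, so the formula of \S\ref{sectionintersectioncohomology} (with $n=3$, whence $\ov 0=0$, $\ov m=2$, $\ov t=4$) expresses $IH^*_{\ov\bullet}$ in terms of $H^*(S;\ZZ)$, $H^*(Th(L_w^\pm);\ZZ)$ and the restriction between them; the Thom isomorphism with Thom class $u$ gives $\wt H^{*}(Th(L_w^\pm);\ZZ)\cong H^{*-2}(S;\ZZ)$ with $u|_S=\pm w$ and $u^3=\langle w\cup w,[S]\rangle\,T$ for a generator $T$ of $H^6$. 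The only features of the resulting table that I need are: $IH^2_{\ov m}(Th(L_w^\pm);\ZZ)\cong H^2(S;\ZZ)$; the product $IH^2_{\ov m}\otimes IH^2_{\ov m}\to IH^4_{\ov t}\cong\ZZ$ is the intersection form of $S$; and the perverse structure map $IH^2_{\ov 0}\to IH^2_{\ov m}$ carries a generator of $IH^2_{\ov 0}\cong\ZZ$ to $\pm w$.

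Hence an isomorphism $\Psi$ of perverse graded algebras as in $(4)$ restricts in bidegree $(\ov m,2)$ to an isomorphism $\psi\colon H^2(S;\ZZ)\to H^2(S';\ZZ)$ that intertwines the two intersection forms up to an overall sign (the sign by which $\Psi$ acts on $IH^4_{\ov t}\cong\ZZ$) and carries $\pm w$ to $\pm w'$; in other words $\psi$ realizes an isometry $Q_S\cong\pm Q_{S'}$ matching the two classes up to sign. Since $S$ and $S'$ are closed simply connected smooth $4$-manifolds their Kirby--Siebenmann invariants vanish, so Freedman's classification of simply connected $4$-manifolds --- in the form that every isometry of the intersection form is induced by a homeomorphism --- yields a homeomorphism $\phi\colon S\to S'$ with $\phi^*w'=\pm w$. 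Reading $(1)$ for the pairs $(S,\pm w)$ and $(S',\pm w')$ (where the sign of the class is immaterial), this is the asserted topological equivalence, closing the cycle.

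The main obstacle is this last step. Reading $\bigl(H^2(S;\ZZ),\,Q_S,\,\pm w\bigr)$ off the perverse algebra and checking that $\Psi$ preserves exactly this data is routine once the table is in hand; the nontrivial input is Freedman's theorem, used here not just to conclude that $S$ and $S'$ are homeomorphic (as in Proposition~\ref{proj_cone_homeo}) but in its stronger realization form for isometries of the intersection form. The care needed concerns the two sign ambiguities --- the orientation of the line bundle, which exchanges $w$ and $-w$, and the unoriented homeomorphism classification, which does not see the orientation of $S$ --- and this is precisely what forces both sides of the statement to be stated with $\pm$.
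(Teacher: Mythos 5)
Your proof is correct and follows essentially the same route as the paper, which gives no separate argument for this proposition but simply declares it a generalization of the proof of Proposition~\ref{proj_cone_homeo}: read off $(H^2(S;\ZZ),Q_S,\pm w)$ from the perverse algebra via $IH^2_{\ov 0}\to IH^2_{\ov m}$ and the product into $IH^4_{\ov t}$, then apply Freedman. Your write-up is in fact more careful than the paper's on the one point that matters, namely that one needs the \emph{realization} form of Freedman's theorem (an isometry of intersection forms matching $w$ with $\pm w'$ is induced by a homeomorphism), together with the two sign ambiguities that force the $\pm$ in the statement.
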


\subsection{Perverse differential graded algebras}\label{perversecdgas}
As in the classical rational homotopy theory of Sullivan \cite{Su}, the study of rational intersection homotopy 
types is closely related to the homotopy theory of
perverse differential graded algebras. We next recall
the main definitions. Given our interest in varieties with only isolated singularities, we 
restrict to the particular case where perversities 
are given by a single integer, and refer \cite{Hov2} and \cite{CST} for the general definitions, 
in which perversities are given by multi-indexes.
For the rest of this section we let $\kk$ be a field of characteristic 0.

\begin{defi}\label{pcdga_def}
A \textit{perverse commutative differential graded algebra} \textit{over $\kk$}
 is a commutative monoid in the category of functors from $\widehat\Pp$
to the category $C^+(\mathbf{Vect_\kk})$ of cochain complexes of $\kk$-vector spaces:
this is a bigraded $\kk$-vector 
space $A_{\ov\bullet}^*=\{A^i_{\ov{p}}\}$, with $i\geq 0$ and $\ov{p}\in \widehat\Pp$,
together with a linear differential $d:A^i_{\ov{p}}\to A^{i+1}_{\ov{p}}$,
an associative product $\mu:A^i_{\ov{p}}\otimes A^j_{\ov{q}}\to A^{i+j}_{\ov{p}+\ov{q}}$ with unit $\eta:\kk\to A^0_{\ov{0}}$ 
and a poset map $A^i_{\ov{q}}\to A^i_{\ov{p}}$ for every $\ov{q}\leq \ov{p}$. 
Products and differentials satisfy the usual graded commutativity and graded Leibnitz rules, 
and are compatible with poset maps:
for all $\ov{p}\leq \ov{p}'$ and $\ov{q}\leq \ov{q}'$ the following diagrams commute:
$$\xymatrix{
A_{\ov p}\otimes A_{\ov q}\ar[d]\ar[r]^\mu& A_{\ov {p}+\ov{q}}\ar[d]\\
A_{\ov p'}\otimes A_{\ov q'}\ar[r]^\mu& A_{\ov{p}'+ \ov{q}'}
}\,\,\,\,\,\,;\,\,\,\,\,\,
\xymatrix{
A_{\ov p}\ar[d]\ar[r]^d& A_{\ov {p}}\ar[d]\\
A_{\ov p'}\ar[r]^d& A_{\ov {p'}}
}.
$$
\end{defi}
The cohomology of a perverse cdga naturally inherits the structure of a perverse commutative graded algebra.
Denote by $\pdga{\kk}$ the category of perverse cdga's over $\kk$.

\begin{defi}
A morphism of perverse cdga's $f:A_{\ov\bullet}\to B_{\ov\bullet}$ is called \textit{quasi-isomorphism}
if for every perversity $\ov p\in\widehat\Pp$
the induced map $H^*(A_{\ov p})\to H^*(B_{\ov q})$ is an isomorphism.
\end{defi}

The category $\pdga{\kk}$ admits a Quillen model structure with quasi-isomorphisms as weak equivalences and
surjections as fibrations (see \cite{Hov2}). The existence and uniqueness of minimal models of perverse cdga's
\`{a} la Sullivan
is proven in \cite{CST}.
Denote by $\Ho(\pdga{\kk})$ the homotopy category of perverse cdga's, defined by inverting quasi-isomorphisms.

\begin{defi}
 A perverse cdga $A_{\ov\bullet}$ is said to be \textit{intersection-formal} if there is an isomorphism in $\Ho(\pdga{\kk})$
from $A_{\ov\bullet}$ to $H^*(A_{\ov \bullet})$.
\end{defi}
Note that if a perverse cdga $A_{\ov{\bullet}}$ is intersection-formal, then both $A_{\ov{0}}$ and $A_{\ov{\infty}}$
are formal cdga's.

We shall consider the following weaker notion of intersection-formality, which excludes the infinite perversity.
Denote by $\GMpdga{\kk}$ the category of \textit{GM-perverse cdga's} defined by replacing $\widehat\Pp$ by $\Pp$ 
in Definition $\ref{pcdga_def}$.
Note that for a GM-perverse cdga $A_{\ov\bullet}$ the products
$A_{\ov p}\otimes A_{\ov q}\lra A_{\ov p + \ov q}$ need only be defined whenever $\ov p+\ov q<\ov \infty$.
The prefix ``GM'' accounts for Goresky-MacPherson, since only finite perversities are involved.
Denote by $\mathrm{U}:\pdga{\kk}\lra \GMpdga{\kk}$ the forgetful functor.

\begin{defi}
A perverse cdga $A_{\ov\bullet}$ is said to be \textit{GM-intersection-formal} if
there is an isomorphism in $\Ho(\GMpdga{\kk})$ from $A_{\ov\bullet}$ to $H^*(A_{\ov \bullet})$.
\end{defi}
Note that if a $A_{\ov{\bullet}}$ is GM-intersection-formal, then $A_{\ov{0}}$ is formal, but $A_{\ov{\infty}}$ 
need not be formal.
We remark that intersection-formality implies the vanishing of Massey products in intersection cohomology,
while GM-intersection-formality implies the vanishing of Massey products in $\mathrm{U}(IH^*_{\ov\bullet}(A))$.
We refer to $\S$3 of \cite{CST} for a proof of these statements and further discussion on (GM)-intersection-formality.

\subsection{Perverse algebraic model}
We next describe the perverse algebraic model of a complex projective variety with only isolated singularities,
as introduced in $\S$3.2 of \cite{CST}.

Let us first fix some notation.
Denote by $\Lambda(t,dt)=\kk(t,dt)$ the free cdga over $\kk$ generated by $t$ in degree 0 and $dt$ in degree 1.
For $\lambda\in\kk$ denote by $\delta_\lambda:\Lambda(t,dt)\to \kk$ the evaluation map defined 
by $t\mapsto \lambda$ and $dt\mapsto 0$.
Given a perversity $\ov p\in \widehat{\Pp}$, we will denote by
$\xi_{\leq \ov p}A(t,dt)$
the truncation 
of $A(t,dt)=A\otimes \Lambda(t,dt)$
by perverse degree $\ov p$, given in degree $k$ by:
$$\xi_{\leq \ov p}A(t,dt)^k=\left\{
\begin{array}{ll}
A^k\otimes\Lambda(t)\oplus A^{k-1}\otimes\Lambda(t)\otimes dt&,\text{ if } k<p\\
\Ker (d^k)\oplus A^k\otimes\Lambda(t)\otimes t\oplus A^{k-1}\otimes\Lambda(t)\otimes dt&,\text{ if } k=p\\
A^k\otimes\Lambda(t)\otimes t\oplus A^{k-1}\otimes\Lambda(t)\otimes dt&,\text{ if } k>p
\end{array}
\right..
$$
This truncation is compatible with differentials, products and poset maps: 
$$d(\xi_{\leq \ov p})\subseteq \xi_{\leq \ov {p}}\text{ and }\xi_{\leq \ov p}\times \xi_{\leq \ov q}\subseteq \xi_{\leq \ov {p}+\ov{q}}
\text{ for all }\ov p,\ov q\in\widehat\Pp, \text{ and }
\xi_{\leq \ov {q}}\subseteq \xi_{\leq \ov p}\text{ for all }\ov q\leq \ov p.$$

\begin{defi}\label{Pullbackpervers}
Let $f:A\to B$ be a morphism of cdga's over $\kk$. Given a perversity $\ov p\in\widehat\Pp$, consider the pull-back
in the category of complexes of $\kk$-vector spaces:
$$
\xymatrix{
\pb\ar[d]
\Ii_{\ov{p}}(f)\ar[r]&\xi_{\leq \ov p}B(t,dt)\ar[d]^{\delta_1}\\
A\ar[r]^{f}&B
}.
$$
Since $\xi_{\leq\ov p}$ is compatible with differentials, products and poset maps, $\Ii_{\ov{\bullet}}(f)$ with the products
and differentials
defined component-wise, is a perverse cdga, called the \textit{perverse cdga associated with $f$}.
\end{defi}

Let $X$ be a complex projective variety with only isolated singularities.
Let $T$ be a closed algebraic neighborhood of the singular locus $\Sigma$ in $X$
(in such a way that the inclusion $\Sigma\subset T$ is a homotopy equivalence, see \cite{Durfee2}).
Then the link of $\Sigma$ in $X$ is $L:=\partial T\simeq T^*:=T-\Sigma$.
The inclusion $\iota:L\hookrightarrow X_{reg}$ of the link into
the regular part of $X$ induces a morphism $\iota^*:\Aa_{pl}(X_{reg})\to \Aa_{pl}(L)$
of cdga's over $\QQ$, between the rational algebras of piecewise linear forms of $X_{reg}$ and $L$. 

\begin{defi}
The \textit{perverse algebraic model for $X$} is the rational perverse cdga
$I\Aa_{\ov{\bullet}}(X):=\Ii_{\ov{\bullet}}(\iota^*)$ associated with the morphism $\iota^*$. It is
given by the pull-back diagrams
$$
\xymatrix{
\pb\ar[d]
I\Aa_{\ov{p}}(X)\ar[r]&\xi_{\leq \ov p}\Aa_{pl}(L)(t,dt)\ar[d]^{\delta_1}\\
\Aa_{pl}(X_{reg})\ar[r]^{\iota^*}&\Aa_{pl}(L)
}.
$$
\end{defi}
We have an isomorphism of perverse commutative graded algebras $H^*(I\Aa_{\ov \bullet}(X))\cong IH^*_{\ov\bullet}(X;\QQ)$.
For the $\ov{0}$-perversity
we have a quasi-isomorphism of cdga's $I\Aa_{\ov0}(X)\simeq \Aa_{pl}(\overline{X})$, where $\overline{X}\to X$ is a normalization of $X$.
For the $\ov\infty$-perversity we recover the rational homotopy type
$I\Aa_{\ov{\infty}}(X)\simeq \Aa_{pl}({X}_{reg})$ of the regular part of $X$.

The above construction defines a contravariant functor $I\Aa_{\ov\bullet}:\Vv_\CC\lra \Ho(\pdga{\QQ})$
from the category $\Vv_\CC$ of complex projective varieties with only isolated singularities 
and stratified morphisms, to the the homotopy category of perverse cdga's over $\QQ$.

\begin{defi}Let $\QQ\subset \mathbf{K}$ be a field.
A complex projective variety $X$ with isolated singularities is called \textit{(GM)-intersection-formal
over $\mathbf{K}$} if and only if $I\Aa_{\ov\bullet}(X)\otimes \mathbf{K}$ is (GM)-intersection-formal.
\end{defi}

\section{Mixed Hodge Structures and Perverse Weight Spectral Sequence}\label{Section_MHS}
In this section, we endow the perverse algebraic model of a complex projective variety $X$ with only isolated singularities,
with natural mixed Hodge structures.
We then study the perverse weight spectral sequence of $X$
and prove that
the complex intersection homotopy type of $X$ is a direct consequence of 
its perverse weight spectral sequence.
Lastly, we describe the perverse weight spectral sequence in terms of the cohomologies
of the varieties associated with a resolution of $X$.

\subsection{Mixed Hodge structures on intersection cohomology}
Deligne showed that the rational cohomology ring
of every complex algebraic variety $X$ is endowed with \textit{mixed Hodge structures}:
for every $k\geq 0$, there is an increasing filtration
$W$ of the rational cohomology $H^k(X;\QQ)$,
called the \textit{weight filtration}, together with a decreasing filtration 
$F$ of the complex cohomology $H^k(X;\CC)$, called the \textit{Hodge filtration}, 
in such a way that the filtration induced by $F$ and its complex conjugate $\overline{F}$ on
the graded objects
$Gr_m^WH^k(X;\CC)\cong Gr_m^WH^k(X;\QQ)\otimes\CC$
define a Hodge decomposition of pure weight $m$.
Furthermore, these filtrations are functorial and
compatible with products of varieties (we refer to \cite{DeHII}, \cite{DeHIII} or the book \cite{PS} for details).

If $X$ is a complex projective variety with only isolated singularities,
the compatible mixed Hodge structures on the cohomologies
of $X$ and $X_{reg}$ define canonical mixed Hodge structures on
 $IH^k_{\ov{p}}(X;\QQ)$, which are compatible with products and
poset maps. In particular, for every $k\geq 0$ the morphism
 $IH^k_{\ov{0}}(X;\QQ)\to IH^k_{\ov{\infty}}(X;\QQ)$ induced by the inclusion
$X_{reg}\hookrightarrow \ov{X}$ preserves mixed Hodge structures.

A well-known result on the mixed Hodge theory of projective varieties with isolated singularities
is that for the middle perversity, 
the weight filtration $W$ on $IH^k_{\ov{m}}(X;\QQ)$ is pure of weight $k$, for all $k\geq 0$, that is:
$0=W_{k-1}\subset W_k=IH^k_{\ov{m}}(X;\QQ).$
This is a consequence of Gabber's purity theorem and the decomposition theorem of
intersection homology (see \cite{Ste}. A direct proof using Hodge theory appears in \cite{Na2}).
We next give the bounds on the weight filtration $W$ for an arbitrary perversity.
\begin{lem}
Let $X$ be a complex projective variety of dimension $n$ with only isolated singularities.
\begin{enumerate}[(1)]
\item  If $\ov p<n-1$ then $0=W_{k-1}\subset W_k\subset \cdots \subset W_{2k}=IH^k_{{\ov p}}(X;\QQ)$. If in addition,
 $k\leq \ov p+1$ or $k>n$, then the weight filtration $W$ on $IH^k_{{\ov p}}(X;\QQ)$ is pure of weight $k$.
 \item  If $\ov p=n-1$ then the weight filtration $W$ on $IH^k_{{\ov p}}(X;\QQ)$ is pure of weight $k$ for, all $k\geq 0$.
\item If $\ov p>n-1$ then $0=W_{-1}\subset W_0\subset \cdots \subset W_{k}=IH^k_{{\ov p}}(X;\QQ)$.
If in addition, $k<n$ or $k\geq \ov p+1$, then the weight filtration $W$ on $IH^k_{{\ov p}}(X;\QQ)$ is pure of weight $k$.
\end{enumerate}
\end{lem}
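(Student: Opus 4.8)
The plan is to read each of the three cases directly off the formula for $IH^k_{\ov p}(X;\QQ)$ recalled in \S\ref{sectionintersectioncohomology}, using Deligne's weight estimates together with the purity statement for the middle perversity recalled just above the lemma. The two estimates I would use systematically are: $H^k(U;\QQ)$ of a smooth (possibly non-compact) variety $U$ has weights in $[k,2k]$, i.e. $W_{k-1}H^k(U;\QQ)=0$; and $H^k(Z;\QQ)$ of a proper (possibly singular) variety $Z$ has weights in $[0,k]$, i.e. $W_kH^k(Z;\QQ)=H^k(Z;\QQ)$ (see \cite{DeHII,DeHIII,PS}). As the restriction $\iota^*\colon H^*(X;\QQ)\to H^*(X_{reg};\QQ)$ is a morphism of mixed Hodge structures, hence strict for $W$, these two estimates applied to source and target force $IH^{\ov p+1}_{\ov p}(X;\QQ)=\Img\bigl(H^{\ov p+1}(X;\QQ)\to H^{\ov p+1}(X_{reg};\QQ)\bigr)$ --- a quotient of a structure of weights $\le\ov p+1$ and a subobject of one of weights $\ge\ov p+1$ --- to be pure of weight $\ov p+1$.

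Next I would bootstrap the middle-perversity purity of (2) into purity of the two building blocks in suitable ranges: since $IH^k_{\ov m}(X;\QQ)$ equals $H^k(X_{reg};\QQ)$ for $k\le n-1$ and $H^k(X;\QQ)$ for $k\ge n+1$, (2) gives that $H^k(X_{reg};\QQ)$ is pure of weight $k$ whenever $k\le n-1$ and that $H^k(X;\QQ)$ is pure of weight $k$ whenever $k\ge n+1$. Granting this, cases (1) and (3) fall out by separating the three regimes in the formula: for $k\le\ov p$ the group is $H^k(X_{reg};\QQ)$, with weights $\ge k$ and pure of weight $k$ once $k\le n-1$; for $k=\ov p+1$ it is pure of weight $k$ by the first paragraph; for $k>\ov p+1$ it is $H^k(X;\QQ)$, with weights $\le k$ and pure of weight $k$ once $k\ge n+1$. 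Comparing these regimes with the hypotheses $\ov p<n-1$ (resp.\ $\ov p>n-1$) and with the extra conditions $k\le\ov p+1$ or $k>n$ (resp.\ $k<n$ or $k\ge\ov p+1$) accounts for all of the assertions except for the weights of $H^k(X;\QQ)$ in the band $\ov p+1<k\le n$ in (1), and of $H^k(X_{reg};\QQ)$ in the band $n\le k\le\ov p$ in (3). Case (2) itself requires no argument, being exactly the recalled statement; and the Poincar\'e duality isomorphism $IH^k_{\ov p}(X;\QQ)\cong\bigl(IH^{2n-k}_{\ov t-\ov p}(X;\QQ)\bigr)^{\du}$ --- which upgrades to an isomorphism of mixed Hodge structures after a Tate twist by $\QQ(-n)$, hence carries weights in $[a,b]$ to weights in $[2n-b,2n-a]$ --- interchanges (1) and (3), so one of the two suffices.

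The hard part is precisely that remaining band. There the elementary Deligne estimate pins the weights down on one side only, so obtaining the claimed interval requires feeding the middle-perversity purity back in through the poset maps $IH^k_{\ov p}(X;\QQ)\to IH^k_{\ov m}(X;\QQ)$ (which, being induced by the poset maps of $X_{reg}$ and $X$, are strict morphisms of mixed Hodge structures for the structures in play): for instance, in (1) with $\ov p+1<k\le n$ one is reduced to bounding from below the weights of the kernel of $H^k(X;\QQ)\to IH^k_{\ov m}(X;\QQ)$, which in turn rests on the weight estimates for $H^*(L)$ of Durfee--Hain \cite{DH} and, ultimately, on Gabber's purity theorem and the decomposition theorem, exactly as in the proofs of the middle-perversity statement in \cite{Ste,Na2}. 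This is the step I expect to cost the most. The remainder is routine bookkeeping: matching the numerical inequalities on $k$ and $\ov p$ against the three regimes and keeping the normalisations $IH^*_{\ov 0}(X)\cong H^*(\ov X)$ and $IH^*_{\ov\infty}(X)\simeq H^*(X_{reg})$ straight throughout.
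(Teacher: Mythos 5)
Your overall strategy coincides with the paper's: both arguments rest on the elementary weight bounds $W_{k-1}H^k(X_{reg};\QQ)=0$ and $W_kH^k(X;\QQ)=H^k(X;\QQ)$, on strictness of morphisms of mixed Hodge structures forcing $\Img\bigl(H^{p+1}(X;\QQ)\to H^{p+1}(X_{reg};\QQ)\bigr)$ to be pure of weight $p+1$, and on the semi-purity statement that $H^k(X_{reg};\QQ)$ is pure for $k<n$ and $H^k(X;\QQ)$ is pure for $k>n$. The paper quotes the latter as Theorem 1.13 of \cite{Ste}, whereas you bootstrap it from the middle-perversity purity through the formula for $IH^k_{\ov m}(X;\QQ)$; this is the same content, and your Poincar\'e duality remark reducing (3) to (1) is a legitimate economy.

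The gap is in what you call the hard part, and it is not a gap you could have closed. In the band $\ov p+1<k\le n$ of case (1) the group is $H^k(X;\QQ)$, whose weights are $\le k$ but need not be $\ge k$: the lower bound $W_{k-1}=0$ you set out to establish there is false, as the paper's own surface computations in \S\ref{Section_ISS} show explicitly ($IH^2_{\ov 0}(X;\QQ)$ may have weights $(0,1,2)$; in the cusp example $Gr_0^WIH^2_{\ov 0}(X;\QQ)\cong\QQ$ with $n=2$, $\ov p=0$, $k=2$). Dually, the upper bound $W_k=IH^k_{\ov p}(X;\QQ)$ fails in the band $n\le k\le\ov p$ of (3) ($IH^2_{\ov 2}(X;\QQ)$ may have weights $(2,3,4)$). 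The displayed filtration chains in (1) and (3) are evidently interchanged in the statement as printed; once they are swapped, the two bands require nothing beyond the elementary Deligne estimates you already quoted in your first paragraph, and the paper's ``matter of verification'' closes immediately. Your proposed route for the band --- bounding from below the weights of $\Ker\bigl(H^k(X;\QQ)\to IH^k_{\ov m}(X;\QQ)\bigr)$ via Durfee--Hain and Gabber purity --- is therefore a dead end: that kernel is precisely where the low-weight classes of $H^k(X;\QQ)$ live. A sanity check you missed: if the weights of $H^k(X;\QQ)$ in that band really were bounded below by $k$, combining with the trivial bound $\le k$ would give purity there, rendering the lemma's ``if in addition'' clause vacuous.
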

\begin{proof}
The weight filtration on the cohomologies of $X$ and $X_{reg}$ is bounded respectively by:
$$0=W_{-1}\subset W_0\subset\cdots\subset W_k=H^k(X;\QQ)\text{ and }0=W_{k-1}\subset W_k\subset\cdots\subset W_{2k}=H^k(X_{reg};\QQ).$$
By Theorem 2.3.5 of \cite{DeHII} the category of mixed Hodge structures is abelian and morphisms of mixed Hodge
structures are strictly compatible with filtrations.
Hence $\Img(H^k(X;\QQ)\lra H^k(X_{reg};\QQ))$ carries a pure Hodge structure of weight $k$, for all $k\geq 0$.
Since $X$ has only isolated singularities, for $k>n$, the filtration $W$ on $H^k(X;\QQ)$ is pure of weight $k$,
while for $k<n$, the filtration $W$ on $H^k(X_{reg};\QQ)$ is pure of weight $k$ (see Theorem 1.13 of \cite{Ste}).
The result is now a matter of verification.
\end{proof}

\subsection{Mixed Hodge perverse cdga's}
We next introduce mixed Hodge perverse cdga's. These are perverse cdga's carrying
compatible mixed Hodge structures in each perversity and degree.
Using Deligne's splitting of mixed Hodge structures we show that, over the
complex numbers, certain mixed Hodge perverse cdga's are isomorphic
to the perverse cdga defined by the first term
of the spectral sequence associated with the weight filtration.
\begin{defi}
A \textit{filtered perverse cdga} $(A_{\ov\bullet},W)$ is a perverse cdga $A_{\ov\bullet}$ together with a filtration
$\{W_{m}A_{\ov\bullet}\}$ indexed by the integers and satisfying: 
\begin{enumerate}[(i)]
 \item  $W_{m-1}A_{\ov p}\subset W_mA_{\ov p}$ and
$d(W_{m}A_{\ov p})\subset W_mA_{\ov p}$ for all $m\in\ZZ$ and all $\ov{p}\in \widehat\Pp$,
\item $W_mA_{\ov p}\cdot W_nA_{\ov q}\subset W_{m+n}A_{\ov p+\ov q}$ for all $m,n\in\ZZ$ and all $\ov{p},\ov{q}\in\widehat\Pp$,
\item $W_mA_{\ov q}\subset W_mA_{\ov p}$, for all $m\in\ZZ$ and all $\ov q\leq \ov p$,
\item For all $n\geq 0$ and all $\ov{p}\in\widehat \Pp$ there exists integers $m$ and $l$ such that
$W_mA^n_{\ov p}=0$ and $W_lA^n_{\ov p}=A^n_{\ov p}$.
\end{enumerate}
\end{defi}

The spectral sequence associated with a filtered perverse cdga $(A_{\ov\bullet},W)$ is compatible with the multiplicative
structure. Hence for all $r\geq 0$, the term $E_r(A_{\ov\bullet},W)$ is a perverse differential 
bigraded algebra with differential $d_r$ of bidegree $(r,1-r)$.

\begin{defi}
A \textit{mixed Hodge perverse cdga} is a filtered perverse cdga $(A_{\ov\bullet},W)$ over $\QQ$,
together with a filtration $F$ on $A_{\ov\bullet}\otimes\CC$, such that for each $n\geq 0$ and each $\ov p\in\widehat \Pp$ the triple
$(A^n_{\ov p},\Dec W,F)$
is a mixed Hodge structure and the differentials $d:A^n_{\ov p}\to A^{n+1}_{\ov p}$, products
$A^n_{\ov p}\times A^m_{\ov q}\to A^{n+m}_{\ov p+\ov q}$ and poset maps $A^n_{\ov {q}}\to A^n_{\ov p}$
are compatible with $W$ and $F$.
\end{defi}
By an abuse of notation, we shall denote such a mixed Hodge perverse cdga by a triple $(A_{\ov\bullet},W,F)$, noting that the second filtration $F$
is not defined over $A_{\ov\bullet}$, but on its complexification $A_{\ov\bullet}\otimes\CC$.
The filtration $\Dec W$ denotes Deligne's d\'{e}calage of the weight filtration $W$ (see Definition 1.3.3 of \cite{DeHII}),
given by $\Dec W_p A_{\ov\bullet}^n:= W_{p-n}A^{n}_{\ov\bullet} \cap d^{-1}(W_{p-n-1}A^{n+1}_{\ov\bullet})$.
By forgetting the perversities we recover the notion of mixed Hodge cdga appearing in \cite{CG1}.
This differs form Morgan's original definition (see \cite{Mo}) by a shift, which we introduce to make it compatible with 
Deligne's mixed Hodge complexes.

\begin{lem}\label{splittingQ}
Let $(A_{\ov{\bullet}},W,F)$ be a mixed Hodge perverse cdga such that $d(W_pA_{\ov{\bullet}})\subset W_{p-1}A_{\ov{\bullet}}$.
There is an isomorphism of complex perverse cdga's
$A_{\ov{\bullet}}\otimes\CC\cong E_1(A_{\ov\bullet}\otimes\CC,W)$.
\end{lem}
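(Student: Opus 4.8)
The plan is to use Deligne's canonical splitting of a mixed Hodge structure, applied simultaneously in every degree and every perversity, and to check that this splitting is multiplicative and compatible with the differential and the poset maps when the extra hypothesis $d(W_pA_{\ov\bullet})\subset W_{p-1}A_{\ov\bullet}$ holds. Recall that for a single mixed Hodge structure $(V,W,F)$ Deligne constructs a canonical bigrading $V_\CC=\bigoplus I^{p,q}$ with $W_m\otimes\CC=\bigoplus_{p+q\le m}I^{p,q}$ and $F^p=\bigoplus_{p'\ge p}I^{p',q}$, functorial for morphisms of mixed Hodge structures and compatible with tensor products (so that the splitting of a tensor product is the tensor product of the splittings). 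Setting $\Gr^W_m V_\CC:=\bigoplus_{p+q=m}I^{p,q}$, this gives a \emph{natural} isomorphism $V_\CC\cong \bigoplus_m \Gr^W_m V_\CC$ of complex vector spaces, natural in the MHS $(V,W,F)$.

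The first step is to apply this to each piece $A^n_{\ov p}$, equipped with its mixed Hodge structure $(A^n_{\ov p},\Dec W,F)$, obtaining complex-linear isomorphisms $A^n_{\ov p}\otimes\CC\cong \bigoplus_m \Gr^{\Dec W}_m A^n_{\ov p}\otimes\CC$. Because the products $A^n_{\ov p}\times A^m_{\ov q}\to A^{n+m}_{\ov p+\ov q}$ and poset maps $A^n_{\ov q}\to A^n_{\ov p}$ are morphisms of mixed Hodge structures, functoriality of Deligne's splitting makes the resulting isomorphism automatically compatible with products and poset maps; that is, it is an isomorphism of perverse \emph{commutative graded} algebras onto $\Gr^{\Dec W}A_{\ov\bullet}\otimes\CC$. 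The second step is to identify $\Gr^{\Dec W}A_{\ov\bullet}\otimes\CC$ with $E_1(A_{\ov\bullet}\otimes\CC,W)$: by the general décalage identity (Proposition 1.3.4 of \cite{DeHII}), one has $E_0^{p,q}(A_{\ov\bullet},\Dec W)\cong E_1^{\cdot,\cdot}(A_{\ov\bullet},W)$ up to reindexing, and the extra hypothesis $d(W_pA_{\ov\bullet})\subset W_{p-1}A_{\ov\bullet}$ forces the differential on $\Gr^W A_{\ov\bullet}$ to be $E_1$, equivalently the $d_0$ differential on $\Gr^{\Dec W}$ to vanish, so that $\Gr^{\Dec W}$ is already $E_1$. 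Thus the differential induced on the splitting is precisely $d_1$ on $E_1(A_{\ov\bullet}\otimes\CC,W)$.

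The main point to verify carefully — and the step I expect to be the only real obstacle — is that the \emph{differential} is respected, i.e.\ that under Deligne's splitting the induced map $\Gr^{\Dec W}A^n_{\ov p}\to\Gr^{\Dec W}A^{n+1}_{\ov p}$ agrees with $d_1$ and not merely with $d_1$ plus lower-order correction terms. Here one uses that $d:A^n_{\ov p}\to A^{n+1}_{\ov p}$ is a morphism of mixed Hodge structures (strictly compatible with both $\Dec W$ and $F$), hence by functoriality of the splitting $d(I^{p,q})\subset I^{p,q}$, so $d$ preserves the bigrading and therefore descends exactly to the $\Gr^W$-level differential with no correction; the hypothesis $d(W_p)\subset W_{p-1}$ then pins this down to be $d_1$. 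Everything else is a routine assembly: collect the degree- and perversity-wise isomorphisms into a single isomorphism of complex perverse cdga's $A_{\ov\bullet}\otimes\CC\cong E_1(A_{\ov\bullet}\otimes\CC,W)$, which is what was claimed.
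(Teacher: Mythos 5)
Your proposal is correct and follows essentially the same route as the paper: Deligne's functorial splitting $I^{p,q}$ of each mixed Hodge structure $(A^n_{\ov p},\Dec W,F)$, functoriality (and compatibility with tensor products) to handle products, poset maps and the differential, and the hypothesis $d(W_p)\subset W_{p-1}$ to identify $\Dec W_p A^n$ with $W_{p-n}A^n$ and hence the splitting with $E_1(A_{\ov\bullet}\otimes\CC,W)$ equipped with $d_1$. The paper packages the same argument by regrouping the $I^{p,q}$ into pieces $A^{i,n-i}$ on which $d$ has bidegree $(1,0)$, but there is no substantive difference.
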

\begin{proof}
The proof is an adaptation to the perverse setting of Lemma 3.20 of \cite{CG1} for mixed Hodge cdga's, see also \cite{Mo}. We indicate the main steps.
Since the triple $(A_{\ov{p}}^n,\Dec W,F)$ is a mixed Hodge structure, by Lemma
1.2.11 of \cite{DeHII}, there are functorial decompositions
$$A^n_{\ov{p}}\otimes\CC=\bigoplus V^{i,j}_{\ov{p},n},\text{ with }\Dec W_m(A^n_{\ov p}\otimes\CC)=\bigoplus_{i+j\leq m} V^{i,j}_{\ov{p},n}.$$
Since the differentials, products and poset maps of $A_{\ov\bullet}$ are morphisms of mixed Hodge structures,
these decompositions are compatible with the perverse cdga structure.
Define complex vector spaces $$A^{i,n-i}_{\ov p}:=\bigoplus_{r} V_{\ov p,n}^{n-r-i,r}.$$
This gives a decomposition $A_{\ov \bullet}^n\otimes\CC=\bigoplus A^{i,n-i}_{\ov p}$ compatible with products and poset maps, and such that
$d A^{i,n-i}_{\ov{p}}\subset A^{i+1,n-i}_{\ov{p}}$. Since
$d(W_pA_{\ov{\bullet}})\subset W_{p-1}A_{\ov{\bullet}}$, it follows that
$W_pA_{\ov{\bullet}}^n=\Dec W_{p+n}A_{\ov{\bullet}}^n$. Then
$$W_m(A_{\ov p}^n\otimes \CC)=\bigoplus_{i\leq m} A^{-i,n+i}_{\ov p}.$$
Since $Gr^W_m(A_{\ov p}^n\otimes \CC)=A^{-m,n+m}_{\ov p}$ and $d A^{m,n-m}_{\ov{p}}\subset A^{m+1,n-m}_{\ov{p}}$,
the differential of $E_r(A_{\ov\bullet},W)$ is 
is trivial for all $r\neq 1$. This gives an isomorphism
$\pi:A_{\ov \bullet}\otimes\CC\to E_1(A_{\ov \bullet}\otimes\CC,W)$
of perverse cdga's such that
$\pi(A^{-m,n+m}_{\ov p})\cong Gr_m^WA_{\ov p}^n=E_1^{-m,n+m}(A_{\ov p}\otimes\CC,W).$
\end{proof}

\begin{rmk}
Let $(A,W)$ be a filtered cdga over a field $\kk$ and let $\kk\subset\mathbf{K}$ be a field extension. If $A$ has finite type
then by Theorem 2.26 of \cite{CG1} we have that
$A\cong E_r(A,W)$ if and only 
if $A\otimes_\kk\mathbf{K}\cong E_r(A\otimes_\kk\mathbf{K},W)$.
The same proof is valid for perverse cdga's of finite type. 
Hence in this case, the isomorphism of Lemma $\ref{splittingQ}$ descends to an
isomorphism over $\QQ$.
\end{rmk}

Consider on $\QQ(t,dt)$ the b\^{e}te filtration $\sigma$.
This is the multiplicative filtration defined by setting
$t$ of weight $0$ and $dt$ of weight $-1$.
Endow $\CC(t,dt)=\QQ(t,dt)\otimes\CC$ with the b\^{e}te filtration $\sigma$ and the trivial filtration $t$.
Since $\Dec \sigma=t$, the triple $(\QQ(t,dt), \sigma, t)$ is a mixed Hodge cdga,
which induces on $H^0(\QQ(t,dt))\cong \QQ$ the trivial mixed Hodge structure.

\begin{prop}\label{pbMHdga}
Let $f:(A,W,F)\lra (B,W,F)$ be a morphism of mixed Hodge cdga's.
Then the perverse cdga $\Ii_{\ov\bullet}(f)$ of Definition $\ref{Pullbackpervers}$
with the filtrations $W$ and $F$ defined via the pull-backs
 $$
\xymatrix{
\pb\ar[d]
(\Ii_{\ov{p}}(f),W,F)\ar[r]&(\xi_{\leq \ov p}B(t,dt),W*\sigma,F*t)\ar[d]^{\delta_1}\\
(A,W,F)\ar[r]^{f}&(B,W,F)
}.
$$
is a mixed Hodge perverse cdga, where the filtrations $W*\sigma$ and $F*t$ are defined by convolution:
$$(W*\sigma)_pB(t,dt)=
W_pB\otimes\Lambda(t)+W_{p+1}B\otimes\Lambda(t)\otimes dt
\text{ and }(F*t)^pB(t,dt)=F^pB\otimes\Lambda(t,dt).$$

\end{prop}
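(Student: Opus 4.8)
The plan is to verify the two conditions defining a mixed Hodge perverse cdga — that each triple $(\Ii_{\ov p}(f)^n,\Dec W,F)$ is a mixed Hodge structure, and that the differentials, products and poset maps are compatible with $W$ and $F$ — by reducing everything to the non-perverse statements of \cite{CG1} together with the abelianness of the category of mixed Hodge structures and the strictness of its morphisms \cite{DeHII}. The two degree-dependent operations in play are Deligne's décalage $\Dec$ and the truncation $\xi_{\leq\ov p}$, and the only mild subtlety is to see that they interact well.

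First I would observe that $W*\sigma$ and $F*t$ are precisely the weight and Hodge filtrations of the tensor product of the mixed Hodge cdga $(B,W,F)$ with the mixed Hodge cdga $(\QQ(t,dt),\sigma,t)$; hence $(B(t,dt),W*\sigma,F*t)$ is itself a mixed Hodge cdga by the path-object construction of \cite{CG1}. In particular, its differential and product are compatible with $W*\sigma$ and $F*t$, the convolution filtrations are multiplicative, and $(B(t,dt)^n,\Dec(W*\sigma),F*t)$ is a mixed Hodge structure for all $n$. Next, for a fixed perversity $\ov p$ and each cohomological degree $k$, the subspace $\xi_{\leq\ov p}B(t,dt)^k\subseteq B(t,dt)^k$ is a mixed Hodge substructure for $(W*\sigma,F*t)$: for $k<p$ it is all of $B(t,dt)^k$; for $k>p$ it is $B^k\otimes t\Lambda(t)\oplus B^{k-1}\otimes\Lambda(t)\,dt$, a direct sum of evident sub-mixed-Hodge-structures; and for $k=p$ one moreover uses that $\Ker(d^k\colon B^k\to B^{k+1})$ is a sub-mixed-Hodge-structure of $B^k$, which holds because $d$ is a morphism of mixed Hodge structures and such morphisms are strict. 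Since $\xi_{\leq\ov p}B(t,dt)$ is stable under $d$ and $\Dec$ commutes with passage to a filtered subcomplex, $(\xi_{\leq\ov p}B(t,dt)^n,\Dec(W*\sigma),F*t)$ is again a mixed Hodge structure, and the poset maps $\xi_{\leq\ov q}\subseteq\xi_{\leq\ov p}$ for $\ov q\leq\ov p$ and the products $\xi_{\leq\ov p}\cdot\xi_{\leq\ov q}\subseteq\xi_{\leq\ov p+\ov q}$ are visibly compatible with $W*\sigma$ and $F*t$.

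Then I would assemble the pull-back. The evaluation $\delta_1$ sends $t\mapsto1$ and $dt\mapsto0$, so it is compatible with $W$ and $F$ (indeed $\delta_1((W*\sigma)_m)\subseteq W_mB$ for every $m$), and like $f$ it is therefore a morphism of mixed Hodge structures in each degree. Consequently $\Ii_{\ov p}(f)^n$ is the kernel of the morphism of mixed Hodge structures $f-\delta_1\colon A^n\oplus\xi_{\leq\ov p}B(t,dt)^n\to B^n$, hence carries a mixed Hodge structure whose weight filtration is bounded in each degree; by strictness, its weight and Hodge filtrations coincide with the ones induced from the direct sum, i.e.\ with the pull-back filtrations, and since $\Dec$ commutes with kernels we conclude that $(\Ii_{\ov p}(f)^n,\Dec W,F)$ is a mixed Hodge structure for all $n$ and $\ov p$. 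Finally, the differential and poset maps of $\Ii_{\ov\bullet}(f)$ are induced componentwise from the filtered ones of $A$, $B$ and $B(t,dt)$, and compatibility of the products $\Ii_{\ov p}(f)\otimes\Ii_{\ov q}(f)\to\Ii_{\ov p+\ov q}(f)$ with $W$ and $F$ follows by combining the multiplicativity of $W,F$ on $A$ and $B$, that of $W*\sigma,F*t$ on $B(t,dt)$, and the inclusion $\xi_{\leq\ov p}\cdot\xi_{\leq\ov q}\subseteq\xi_{\leq\ov p+\ov q}$. This shows that $(\Ii_{\ov\bullet}(f),W,F)$ is a mixed Hodge perverse cdga.

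The step I expect to require the most care — and essentially the only one not already contained in \cite{CG1} — is the compatibility of the truncation $\xi_{\leq\ov p}$ with both $W$ and the décalage: one must check that $\xi_{\leq\ov p}B(t,dt)$, although not a sub-cdga of $B(t,dt)$ (its self-products only land in larger perversities), is a filtered subcomplex whose décalée filtration is still degreewise a mixed Hodge structure. This rests on the two observations above, namely that $\Ker d^k$ is a sub-mixed-Hodge-structure and that $\Dec$ commutes with filtered subcomplexes and with kernels — both consequences of the abelianness of the category of mixed Hodge structures.
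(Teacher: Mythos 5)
Your proof is correct and follows essentially the same route as the paper's: reduce to the non-perverse case, use that décalage commutes with the pull-back (kernel) and that $\Dec(W*\sigma)=\Dec W*t$, invoke abelianness/strictness of mixed Hodge structures to see that the truncations $\xi_{\leq\ov p}B(t,dt)^n$ and hence the pull-back carry mixed Hodge structures, and then check multiplicativity and poset compatibility directly. Your write-up merely makes explicit the points the paper leaves implicit (notably that $\Ker d^k$ is a sub-mixed-Hodge-structure, which is what handles the degree $k=p$ case of the truncation).
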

\begin{proof}
It suffices to verify that for all $n\geq 0$ and all $\ov p\in\widehat \Pp$, the triple
$(\Ii_{\ov{p}}(f)^n,\Dec W,F)$ is a mixed Hodge structure, and that the products
and poset maps of $\Ii_{\ov{\bullet}}(f)$ are compatible with the filtrations $W$ and $F$.
Since d\'{e}calage commutes with pull-backs and $\Dec(W*\sigma)=\Dec W* t$, we have
 $$
\xymatrix{
\pb\ar[d]
(\Ii_{\ov{p}}(f)^n,\Dec W,F)\ar[r]&(\xi_{\leq \ov p}B(t,dt)^n,\Dec W*t,F*t)\ar[d]^{\delta_1}\\
(A^n,\Dec W,F)\ar[r]^{f}&(B^n,\Dec W,F)
}.
$$
Since the category of mixed Hodge structures is abelian, the truncations
$(\xi_{\leq \ov p}B(t,dt)^n,\Dec W,F)$
are mixed Hodge structures, and hence the above pull-back gives a mixed Hodge structure.
It is straightforward to verify that the products
$\xi_{\leq \ov q}B(t,dt)\times \xi_{\leq \ov p}B(t,dt)\to \xi_{\leq \ov q+\ov p}B(t,dt)$
and poset maps
$\xi_{\leq \ov q}B(t,dt)\lra \xi_{\leq \ov p}B(t,dt)$ for $\ov q\leq \ov p$,
are compatible with filtrations, so that the perverse algebra structure of 
$\Ii_{\ov{\bullet}}(f)$ is also compatible with $W$ and $F$.
\end{proof}

\begin{lem}\label{commutepb}
Let $f:(A,W,F)\to (B,W,F)$ be a morphism of mixed Hodge cdga's. There is an isomorphism of perverse differential bigraded algebras 
$E_1(\Ii_{\ov{\bullet}}(f),W)\cong \Ii_{\ov{\bullet}}(E_1(f,W)).$
\end{lem}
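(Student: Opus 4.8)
The plan is to compute both sides perversity by perversity and observe that the functor $E_1(-,W)$ — i.e. passing to the associated graded of the décalée weight filtration and taking the first page — commutes with the pull-back square defining $\Ii_{\ov\bullet}(f)$. The key point is that $E_1(-,W)$ is an \emph{exact} functor on the category of filtered complexes that appear here: because all the weight filtrations in sight come from mixed Hodge structures (after applying $\Dec$), morphisms are strictly compatible with $W$ by Theorem 2.3.5 of \cite{DeHII}, and $\delta_1:\xi_{\leq\ov p}B(t,dt)\to B$ is a strict filtered surjection for the convolution filtration $W*\sigma$. A strict surjection of filtered complexes stays surjective on $E_1$, and a pull-back along a surjection is a kernel-type limit, which exactness of $\mathrm{Gr}^W$ preserves; hence $E_1$ applied to the pull-back square of Definition $\ref{Pullbackpervers}$ is again a pull-back square, which is precisely the square defining $\Ii_{\ov\bullet}(E_1(f,W))$.

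Concretely, I would proceed in the following steps. First, recall from Proposition $\ref{pbMHdga}$ that $(\Ii_{\ov p}(f)^n,\Dec W,F)$ is a mixed Hodge structure and that we have the pull-back of mixed Hodge structures
$$
\xymatrix{
\pb\ar[d]
(\Ii_{\ov{p}}(f)^n,\Dec W)\ar[r]&(\xi_{\leq \ov p}B(t,dt)^n,\Dec W*t)\ar[d]^{\delta_1}\\
(A^n,\Dec W)\ar[r]^{f}&(B^n,\Dec W)
}.
$$
Second, apply $\mathrm{Gr}^{\Dec W}$: since the category of mixed Hodge structures is abelian and the maps are (strictly) morphisms of mixed Hodge structures, $\mathrm{Gr}^{\Dec W}$ is exact, and $\delta_1$ remains an epimorphism on graded pieces, so the resulting square of bigraded vector spaces is again a pull-back. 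Third, identify the three outer terms: $\mathrm{Gr}^{\Dec W}A^n = E_1(A,W)$ as a bigraded complex (and similarly for $B$), while for the truncation one checks, using the explicit formula for $\xi_{\leq\ov p}$ and the compatibility $\Dec(W*\sigma)=\Dec W*t$ already noted in the proof of Proposition $\ref{pbMHdga}$, that $\mathrm{Gr}^{\Dec W}\bigl(\xi_{\leq\ov p}B(t,dt)\bigr)\cong \xi_{\leq\ov p}\bigl(E_1(B,W)\bigr)(t,dt)$, since truncation by perverse degree commutes with taking a graded of a filtration by a complementary index. Fourth, note the differential on $E_1$ of each corner is the induced one, and the square of pull-backs of complexes assembles — over all $n$ and then naturally over all $\ov p\in\widehat\Pp$, using strict compatibility with products and poset maps — into an isomorphism of perverse differential bigraded algebras $E_1(\Ii_{\ov\bullet}(f),W)\cong\Ii_{\ov\bullet}(E_1(f,W))$.

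The main obstacle is the bookkeeping in the third step: verifying that $\mathrm{Gr}^{\Dec W}$ of the perverse-degree truncation $\xi_{\leq\ov p}B(t,dt)$ is naturally the truncation $\xi_{\leq\ov p}$ of $\mathrm{Gr}^{\Dec W}B(t,dt)=E_1(B,W)(t,dt)$. This requires unwinding the convolution filtration $W*\sigma$ on $B\otimes\Lambda(t,dt)$, checking that $\mathrm{Gr}^{W*\sigma}(B(t,dt))\cong (\mathrm{Gr}^W B)(t,dt)$ with $dt$ placed in the correct weight, and then confirming that the three-case definition of $\xi_{\leq\ov p}$ (which involves $\Ker d^k$ in the middle degree $k=p$) is compatible with passing to graded pieces — here one uses that on $E_1$ the relevant differential behaves as it does in the proof of Lemma $\ref{splittingQ}$, so the kernel in degree $p$ passes to the kernel of the induced map. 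Everything else is formal: exactness of $\mathrm{Gr}$ on mixed Hodge structures, preservation of surjections, and the elementary fact that pull-backs along surjections commute with exact functors. I would also remark that all identifications are natural in $f$, which will be needed when this lemma is combined with the splitting of Lemma $\ref{splittingQ}$ in the proof of the main formality theorem.
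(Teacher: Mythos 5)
Your overall strategy is the right one and is essentially the paper's: realize $\Ii_{\ov p}(f)$ as the pull-back (equivalently, as $\Ker(f-\delta_1)$), observe that $\delta_1$ is a filtered surjection that stays surjective on the first page of the weight spectral sequence, deduce that passing to the first page preserves the defining square, and then check separately that $E_1$ commutes with the truncations $\xi_{\leq\ov p}$ and that $E_1(B(t,dt),W*\sigma)\cong E_1(B,W)(t,dt)$. The paper carries this out directly for the functor $E_1(-,W)$, citing Proposition 3.9 of \cite{Cirici} for the identity $E_1(\Ker(f-\delta_1))=\Ker(E_1(f-\delta_1))$.

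There is, however, one genuine gap in your version: the identification in your third step, $\mathrm{Gr}^{\Dec W}A^n\cong E_1(A,W)$, is not valid for an arbitrary mixed Hodge cdga. By Deligne (Proposition 1.3.4 of \cite{DeHII}) the comparison map $E_r(A,\Dec W)\to E_{r+1}(A,W)$ is an isomorphism only for $r\geq 1$; at $r=0$, which is the level you are using, it is merely surjective, and it fails to be injective precisely when the differential does not already satisfy $d(W_pA)\subset W_{p-1}A$. Since the lemma is stated for an arbitrary morphism of mixed Hodge cdga's, your argument as written proves a statement about $\mathrm{Gr}^{\Dec W}$ of the pull-back, not about $E_1(-,W)$ of it. The identification (and hence your whole proof) does go through under the additional hypothesis $d(W_pA_{\ov\bullet})\subset W_{p-1}A_{\ov\bullet}$, because then $W_pA^n=\Dec W_{p+n}A^n$ and $E_1(A,W)=\mathrm{Gr}^WA$ exactly as in Lemma \ref{splittingQ}; and this hypothesis is satisfied in the only place the lemma is applied, by Theorem \ref{MHSmodel}(4), so Theorem \ref{IE1formality} is unaffected. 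To prove the lemma as stated you should drop the detour through $\Dec W$ and the abelian category of mixed Hodge structures, and instead argue at the level of the $W$-spectral sequence itself: the surjection $\delta_1$ induces a surjective quasi-isomorphism on $E_1$, so $E_1$ commutes with the kernel $\Ker(f-\delta_1)$ --- this is exactly where the cited result of \cite{Cirici} (rather than strictness of morphisms of mixed Hodge structures) does the work.
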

\begin{proof}
The evaluation map
$\delta_1:(\xi_{\leq \ov p}B(t,dt),W*\sigma)\lra (B,W)$ induces a surjective quasi-isomorphism at the level of $E_1$,
for any perversity $\ov p$.
Therefore we have
$E_1(\Ker(f-\delta_1))=\Ker(E_1(f-\delta_1)$
(c.f. Proposition
3.9 of \cite{Cirici}). 
It remains to observe that $E_1$ commutes with the truncations $\xi_{\leq \ov p}$ and that we have
a canonical isomorphism of differential bigraded algebras
$E_1(B(t,dt), W*\sigma)\cong E_1(B,W)(t,dt).$
\end{proof}

\subsection{Mixed Hodge structures on the perverse algebraic model}
We next show that the perverse algebraic model of a complex projective variety $X$ with only isolated singularities
carries well-defined mixed Hodge structures (in the homotopy category) which are compatible
with the mixed Hodge structures on the rational homotopy types of
$\ov{X}$ and $X_{reg}$, and are functorial for stratified morphisms.

We first recall some basic definitions for the theory of mixed Hodge structures 
in rational homotopy. We refer to \cite{Mo, Na, CG1} for further details.
\begin{defi}
A \textit{mixed Hodge diagram (of cdga's over $\QQ$)} consists of
a filtered cdga $(A_{\QQ},W)$ over $\QQ$, a bifiltered cdga $(A_{\CC},W,F)$ over $\CC$, together with
a string of filtered quasi-isomorphisms
from $(A_\QQ,W)\otimes \CC$ to $(A_\CC,W).$
In addition, the following axioms are satisfied:
\begin{enumerate}
\item[($\mathrm{MH}_0$)] The weight filtrations $W$ are regular and exhaustive. The Hodge filtration $F$ is biregular.
The cohomology $H(A_\QQ)$ has finite type.
\item[($\mathrm{MH}_1$)] For all $p\in\ZZ$, the differential of $Gr_p^WA_\CC$ is strictly compatible with $F$.
\item[($\mathrm{MH}_2$)] For all $n\geq 0$ and all $p\in\ZZ$, the filtration $F$ induced on $H^n(Gr^W_pA_{\CC})$ defines a pure Hodge structure of
weight $p+n$ on $H^n(Gr^W_pA_\QQ)$.
\end{enumerate}
\end{defi}

Morphisms of mixed Hodge diagrams are given by level-wise morphisms of (bi)filtered cdga's making the corresponding diagrams strictly commutative.
Note that by forgetting the multiplicative structures we recover the original notion of mixed Hodge complex introduced by Deligne
(see 8.1 of \cite{DeHIII}).

Axiom ($\mathrm{MH}_2$) implies that for all $n\geq 0$ the triple $(H^n(A_\QQ),\Dec W,F)$ is a mixed Hodge structure over $\QQ$.
In particular, the cohomology of every mixed Hodge diagram is a mixed Hodge cdga with trivial
differential. Since the category of mixed Hodge structures is abelian, every mixed Hodge cdga is a mixed Hodge diagram in which the
comparison quasi-isomorphisms are identities.

\begin{defi}
Let $X$ be a topological space. A \textit{mixed Hodge diagram for $X$} is a mixed Hodge diagram 
$\Aa(X)$ whose rational component $\Aa(X)_\QQ\simeq A_{pl}(X)$ is quasi-isomorphic to the rational algebra of piecewise linear forms of $X$.
\end{defi}
Note that when such a mixed Hodge diagram $\Aa(X)$ exists, the above quasi-isomorphism induces an isomorphism
$H^*(\Aa(X)_\QQ)\cong H^*(X;\QQ)$ endowing the cohomology of $X$ with mixed Hodge structures.

We now prove the main result of this section, endowing the perverse algebraic model of a projective variety with only isolated singularities,
with mixed Hodge structures. The proof relies on the existence of compatible mixed Hodge structures on the rational
homotopy types of the link of the singularities and the regular part of the variety respectively (\cite{Mo, Na, DH}),
together with the existence of minimal models \`{a} la Sullivan of morphisms of cohomologically 
connected mixed Hodge diagrams, which is proven in \cite{CG1} (see also \cite{Cirici} for a homotopical framework of such models).

\begin{teo}\label{MHSmodel}
Let $X$ be a complex projective variety with only isolated singularities and let $I\Aa_{\ov\bullet}(X)$ denote the perverse algebraic model of $X$.
There is a mixed Hodge perverse cdga $(IM_{\ov\bullet}(X),W,F)$ and a string
$IM_{\ov\bullet}(X)\leftarrow \ast \to I\Aa_{\ov\bullet}(X)$ of quasi-isomorphisms of perverse cdga's such that:
\begin{enumerate}[(1)]
\item $IM_{\ov\bullet}(X)=\Ii_{\ov\bullet}(\wt \iota)$, where $\wt \iota:M(X_{reg})\to M(L)$ is a 
model of mixed Hodge cdga's for the rational homotopy type of the inclusion $\iota:L\hookrightarrow X_{reg}$.
\item There is an isomorphism of mixed Hodge perverse graded algebras $H^*(IM_{\ov\bullet}(X))\cong IH^*_{\ov\bullet}(X;\QQ)$.
\item The mixed Hodge cdga's $IM_{\ov{0}}(X)$ and  $IM_{\ov{\infty}}(X)$ 
define the mixed Hodge structures on the rational homotopy type of a normalization $\ov{X}$ of $X$ and the regular part $X_{reg}$ of $X$
respectively.
\item The differential of $IM_{\ov\bullet}(X)$ satisfies $d(W_pIM_{\ov\bullet}(X))\subset W_{p-1}IM_{\ov\bullet}(X)$.
\end{enumerate} 
This construction defines a functor $IM_{\ov \bullet}:\Vv_\CC\lra \Ho(\widehat{\Pp}\mathbf{MH}\mathsf{CDGA})$
with values in the homotopy category of mixed Hodge perverse cdga's.
\end{teo}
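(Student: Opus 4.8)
The plan is to realize $I\Aa_{\ov\bullet}(X)$ as $\Ii_{\ov\bullet}$ applied to a morphism of cdga's, to replace that morphism by a morphism of \emph{mixed Hodge} cdga's, and then to invoke Proposition \ref{pbMHdga}. The link $L$ has the homotopy type of the punctured neighborhood $T^\ast=T-\Sigma$, which is a quasi-projective variety, and $\iota\colon T^\ast\hookrightarrow X_{reg}$ is a morphism of quasi-projective varieties inducing $\iota^\ast\colon\Aa_{pl}(X_{reg})\to\Aa_{pl}(L)$ up to quasi-isomorphism. By the functorial construction of mixed Hodge diagrams (Navarro \cite{Na}; Morgan \cite{Mo} and Durfee--Hain \cite{DH} identify the structures induced on $X_{reg}$ and $L$), the map $\iota$ lifts to a morphism $\Aa(X_{reg})\to\Aa(L)$ of mixed Hodge diagrams. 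Applying the relative minimal model theorem of \cite{CG1} for morphisms of cohomologically connected mixed Hodge diagrams, I obtain a morphism $\wt\iota\colon M(X_{reg})\to M(L)$ of mixed Hodge cdga's with $d(W_pM)\subset W_{p-1}M$ on both source and target, together with a string of quasi-isomorphisms in the arrow category of cdga's over $\QQ$ connecting $\wt\iota$ to $\iota^\ast$. Setting $IM_{\ov\bullet}(X):=\Ii_{\ov\bullet}(\wt\iota)$ then gives (1) by definition.

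Proposition \ref{pbMHdga} shows at once that $IM_{\ov\bullet}(X)$, equipped with the convolution filtrations $W\ast\sigma$ and $F\ast t$, is a mixed Hodge perverse cdga. For (4), each of the three cdga's in the defining pullback has differential strictly lowering the weight by one: $M(X_{reg})$ and $M(L)$ by the choice above, and $\xi_{\leq\ov p}M(L)(t,dt)$ because $d(\sigma_p\Lambda(t,dt))\subset\sigma_{p-1}\Lambda(t,dt)$ (as $dt=d(t)$ has weight $-1$), so that $d((W\ast\sigma)_p)\subset(W\ast\sigma)_{p-1}$; since a filtered pullback inherits this property, $d(W_pIM_{\ov\bullet}(X))\subset W_{p-1}IM_{\ov\bullet}(X)$. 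To produce the string $IM_{\ov\bullet}(X)\leftarrow\ast\to I\Aa_{\ov\bullet}(X)$ of quasi-isomorphisms of perverse cdga's, I apply $\Ii_{\ov\bullet}$ to the string connecting $\wt\iota$ and $\iota^\ast$: since $\delta_1\colon\xi_{\leq\ov p}B(t,dt)\to B$ is surjective, hence a fibration, the pullback defining $\Ii_{\ov p}$ is a homotopy pullback and therefore sends a quasi-isomorphism of cospans to a quasi-isomorphism; doing this for every $\ov p$ and then invoking the model structure on $\pdga{\QQ}$ of \cite{Hov2} yields the required roof.

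It then remains to verify (2), (3) and functoriality. For (2), the isomorphism $H^\ast(I\Aa_{\ov\bullet}(X))\cong IH^\ast_{\ov\bullet}(X;\QQ)$ of perverse graded algebras recalled above transports along the string; that it is an isomorphism of mixed Hodge perverse graded algebras follows from the pullback long exact sequence computing $H^\ast(\Ii_{\ov p}(\wt\iota))$, which is an exact sequence of mixed Hodge structures matching, degree by degree, the description of $IH^k_{\ov p}(X;\QQ)$ in terms of $H^\ast(X_{reg};\QQ)$, $H^\ast(\ov X;\QQ)$ and the image of $H^\ast(X;\QQ)$, together with strictness of morphisms of mixed Hodge structures. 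For (3): at $\ov p=\ov\infty$ the truncation is trivial, so $\delta_1$ is a trivial fibration and the pullback $IM_{\ov\infty}(X)\to M(X_{reg})$ is a filtered quasi-isomorphism; at $\ov p=\ov 0$ the argument of \cite{CST} identifying $I\Aa_{\ov 0}(X)\simeq\Aa_{pl}(\ov X)$ upgrades to a quasi-isomorphism of mixed Hodge cdga's between $IM_{\ov 0}(X)$ and a model of $\ov X$, and since the mixed Hodge structure on a rational homotopy type is unique up to isomorphism in the homotopy category by \cite{CG1}, these coincide with the structures of Morgan and Durfee--Hain. Functoriality of $IM_{\ov\bullet}$ follows from that of the mixed Hodge diagram models, of the relative model construction of \cite{CG1} at the level of homotopy categories, and of $\Ii_{\ov\bullet}$.

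I expect the main obstacle to be precisely the compatibility statement (2): checking that the abstractly produced mixed Hodge structure on $H^\ast(IM_{\ov\bullet}(X))$ agrees with the geometrically defined one on $IH^\ast_{\ov\bullet}(X;\QQ)$, which is where the bookkeeping with the pullback, the truncations $\xi_{\leq\ov p}$ and strictness of morphisms of mixed Hodge structures concentrates; by contrast, the existence of the relative mixed Hodge cdga model, the décalage property (4), and its functoriality are a direct appeal to \cite{CG1}.
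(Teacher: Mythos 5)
Your proposal follows essentially the same route as the paper: invoke Durfee--Hain (and Navarro/Morgan) for a morphism of mixed Hodge diagrams $\Aa(X_{reg})\to\Aa(L)$ lifting $\iota^*$, apply the relative minimal model theorem of \cite{CG1} to get $\wt\iota$ with $d(W_p)\subset W_{p-1}$, feed it into Proposition \ref{pbMHdga}, and deduce (2)--(4) from the pullback description and the identification $\Ker(H^*(X_{reg})\to H^*(L))\cong\Img(H^*(X)\to H^*(X_{reg}))$. One small slip: the punctured neighborhood $T^*$ is not itself a quasi-projective variety (which is precisely why the mixed Hodge diagram for the link requires the separate construction of Durfee--Hain rather than the standard quasi-projective machinery), but since you ultimately cite \cite{DH} for exactly the statement needed, this does not affect the argument.
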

\begin{proof}
By Theorem 3.2.1 of \cite{DH} (see also $\S$13 of \cite{Na}) there are mixed Hodge diagrams $\Aa(X_{reg})$ and $\Aa(L)$ 
for $X_{reg}$ and $L$ respectively, together with a morphism
$\Aa(X_{reg})\lra \Aa(L)$ 
whose rational component is the morphism
$\iota^*:\Aa_{pl}(X_{reg})\lra \Aa_{pl}(L)$ of rational piecewise linear forms induced by the inclusion $L\hookrightarrow X_{reg}$.
By Theorem 3.19 of \cite{CG1} on the existence of relative minimal models for mixed Hodge diagrams,
we can construct a commutative diagram of mixed Hodge diagrams
$$
\xymatrix{
\Aa(X_{reg})\ar[r]^{\iota^*}&\Aa(L)\\
\ar[u]\ar[d]\ast\ar[r]&\ast\ar[u]\ar[d]\\
M(X_{reg})\ar[r]^{\wt \iota}&M(L)
}
$$
where the vertical maps are quasi-isomorphisms and $\wt \iota$ is a morphism of mixed Hodge cdga's
whose differential satisfies $d(W_p)\subset W_{p-1}$.
By Proposition $\ref{pbMHdga}$, the perverse cdga $IM_{\ov{\bullet}}(X):=\Ii_{\ov\bullet}(\wt i)$
associated with $\wt \iota$ is a mixed Hodge cdga.
Furthermore, the above commutative
diagram gives a string of quasi-isomorphisms of perverse cdga's from 
$IM_{\ov\bullet}(X)$ to $I\Aa_{\ov\bullet}(X)$.
This proves (1).
The induced map $\wt \iota:M(X_{reg})\lra M(L)$ induces 
a morphism of mixed Hodge structures in cohomology $H^*(X_{reg})\lra H^*(L)$.
Hence (2) follows from the isomorphism $\Ker(H^*(X_{reg})\lra H^*(L))\cong \Img(H^*(X)\lra H^*(X_{reg})$.
Assertion (3) is easily verified. Lastly, (4) follows from the fact that
the differential on 
$IM_{\ov p}(X)$ is defined component-wise by differentials satisfying $d(W_p)\subset W_{p-1}$.
The construction of $IM_{\ov \bullet}(X)$ is functorial (in the homotopy category) for morphisms $f:X\to Y$ such that $f(X_{reg})\subset Y_{reg}$.
\end{proof}

\subsection{Perverse weight spectral sequence}
Let $X$ be a complex projective variety with only isolated singularities.
The inclusion $\iota:L\hookrightarrow X_{reg}=X-\Sigma$ of the link into the regular part of $X$
induces a morphism of (multiplicative) weight spectral sequences $E_1(\iota^*):E_1(X_{reg})\lra E_1(L)$,
where $E_1(X_{reg})$ and $E_1(L)$
are the spectral sequences associated with the weight filtration of a mixed Hodge diagram
for $X_{reg}$ and $L$ respectively.

\begin{defi}
The \textit{perverse weight spectral sequence of $X$} is the perverse differential bigraded algebra
$IE_{1,\ov\bullet}(X):=\Ii_{\ov\bullet}(E_1(\iota^*))$
associated with the morphism $E_1(\iota^*)$,
given by the pull-back diagrams
$$
\xymatrix{
\pb\ar[d]
IE_{1,\ov p}(X)\ar[r]&\xi_{\leq \ov p}E_1(L)(t,dt)\ar[d]^{\delta_1}\\
E_1(X_{reg})\ar[r]^{E_1(\iota^*)}&E_1(L)
}.
$$

\end{defi}
We remark that $IE_{1,\ov\bullet}(X)$ is only well-defined in the homotopy category of perverse differential bigraded algebras.
Its cohomology is a well-defined algebraic invariant of $X$ and satisfies
$$IE^{r,s}_{2,\ov p}(X):=H^{r,s}(IE_{1,\ov p}(X))\cong Gr_{s}^WIH_{\ov p}^{r+s}(X;\QQ).$$

The following result states that the complex intersection homotopy type of a projective variety with isolated singularities is
determined by its perverse weight spectral sequence.

\begin{teo}\label{IE1formality}Let $X$ be a complex projective variety with only isolated singularities. There is an isomorphism
from $I\Aa_{\ov\bullet}(X)\otimes\CC$ to $IE_{1,\ov\bullet}(X)\otimes\CC$
in the homotopy category $\Ho(\pdga{\CC})$.
\end{teo}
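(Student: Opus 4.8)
The plan is to chain together Theorem~\ref{MHSmodel}, Lemma~\ref{splittingQ} and Lemma~\ref{commutepb}. First, by Theorem~\ref{MHSmodel} there is a string of quasi-isomorphisms of perverse cdga's $I\Aa_{\ov\bullet}(X)\leftarrow \ast\to IM_{\ov\bullet}(X)$, where $IM_{\ov\bullet}(X)=\Ii_{\ov\bullet}(\wt\iota)$ is a mixed Hodge perverse cdga whose differential satisfies $d(W_pIM_{\ov\bullet}(X))\subset W_{p-1}IM_{\ov\bullet}(X)$ by part~(4). Tensoring with $\CC$, this string already gives an isomorphism $I\Aa_{\ov\bullet}(X)\otimes\CC\cong IM_{\ov\bullet}(X)\otimes\CC$ in $\Ho(\pdga{\CC})$. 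The d\'{e}calage hypothesis on the differential is exactly what is needed to invoke Lemma~\ref{splittingQ}, which produces an isomorphism of complex perverse cdga's
$$IM_{\ov\bullet}(X)\otimes\CC\;\cong\;E_1\big(IM_{\ov\bullet}(X)\otimes\CC,W\big)=E_1\big(\Ii_{\ov\bullet}(\wt\iota)\otimes\CC,W\big).$$

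The next step is to identify the right-hand side with $IE_{1,\ov\bullet}(X)\otimes\CC$. Since the construction $\Ii_{\ov\bullet}(-)$ is built from the truncations $\xi_{\leq\ov p}$, the algebra $\Lambda(t,dt)$ and a pull-back along a surjection, it commutes with the exact functor $-\otimes\CC$, so $\Ii_{\ov\bullet}(\wt\iota)\otimes\CC\cong\Ii_{\ov\bullet}(\wt\iota\otimes\CC)$, and Lemma~\ref{commutepb} applied to the morphism of mixed Hodge cdga's $\wt\iota\otimes\CC$ yields
$$E_1\big(\Ii_{\ov\bullet}(\wt\iota\otimes\CC),W\big)\;\cong\;\Ii_{\ov\bullet}\big(E_1(\wt\iota\otimes\CC,W)\big).$$
It remains to see that the morphism of differential bigraded algebras $E_1(\wt\iota\otimes\CC,W)$ is isomorphic to $E_1(\iota^*)\otimes\CC$, the morphism defining $IE_{1,\ov\bullet}(X)$. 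This follows from the proof of Theorem~\ref{MHSmodel}: there $\wt\iota:M(X_{reg})\to M(L)$ is connected to the morphism $\Aa(X_{reg})\to\Aa(L)$ of mixed Hodge diagrams (whose rational component is $\iota^*$) by a commutative square of mixed Hodge diagrams with vertical filtered quasi-isomorphisms. A filtered quasi-isomorphism of filtered cdga's induces an isomorphism on the associated spectral sequence from the $E_1$-term on; applying $E_1(-,W)$ (and $-\otimes\CC$, which commutes with $E_1$) to this square therefore identifies $E_1(\wt\iota\otimes\CC,W)$ with $E_1(\iota^*)\otimes\CC$ compatibly with all the structure. Applying the functor $\Ii_{\ov\bullet}(-)$ to this isomorphism of morphisms gives $\Ii_{\ov\bullet}\big(E_1(\wt\iota\otimes\CC,W)\big)\cong\Ii_{\ov\bullet}(E_1(\iota^*))\otimes\CC=IE_{1,\ov\bullet}(X)\otimes\CC$.

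Concatenating the displayed isomorphisms yields the desired isomorphism from $I\Aa_{\ov\bullet}(X)\otimes\CC$ to $IE_{1,\ov\bullet}(X)\otimes\CC$ in $\Ho(\pdga{\CC})$. I expect the only delicate point to be bookkeeping rather than a genuine new obstacle: both $IM_{\ov\bullet}(X)$ and $IE_{1,\ov\bullet}(X)$ depend on the auxiliary choices of mixed Hodge diagrams and of a relative model, hence are well defined only up to quasi-isomorphism, so the argument must be carried out in the homotopy category, checking that the maps used above (the d\'{e}calage splitting of Lemma~\ref{splittingQ}, the pull-back identity of Lemma~\ref{commutepb}, and the filtered quasi-isomorphisms relating $\wt\iota$ and $\iota^*$) are natural enough to be preserved by $\Ii_{\ov\bullet}(-)$ and by $E_1(-,W)$. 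The substantive content is already contained in Theorem~\ref{MHSmodel}, Lemma~\ref{splittingQ} and Lemma~\ref{commutepb}; what remains is precisely this compatibility verification.
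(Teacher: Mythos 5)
Your proposal is correct and follows essentially the same route as the paper: Theorem~\ref{MHSmodel} to replace $I\Aa_{\ov\bullet}(X)$ by the mixed Hodge perverse cdga $IM_{\ov\bullet}(X)=\Ii_{\ov\bullet}(\wt\iota)$, then Lemma~\ref{splittingQ} (using the d\'{e}calage condition on the differential) and Lemma~\ref{commutepb}, and finally the identification of $E_1(\wt\iota,W)$ with $E_1(\iota^*)$ coming from the comparison diagram of mixed Hodge diagrams. The only cosmetic difference is that the paper contents itself with a string of quasi-isomorphisms between $\Ii_{\ov\bullet}(E_1(\wt\iota))$ and $\Ii_{\ov\bullet}(E_1(\iota^*))$ rather than asserting an isomorphism of $E_1$-terms, which is all that is needed since the conclusion lives in the homotopy category where $IE_{1,\ov\bullet}(X)$ is only well defined up to quasi-isomorphism anyway.
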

\begin{proof}
Let $(IM_{\ov\bullet}(X),W,F)$ be the mixed Hodge perverse cdga given by Theorem 
$\ref{MHSmodel}$.
Since the differential satisfies $d(W_pIM_{\ov\bullet}(X))\subset W_{p-1}IM_{\ov\bullet}(X)$,
by Lemma $\ref{splittingQ}$, we have an isomorphism of complex perverse cdga's
$IM_{\ov{\bullet}}(X)\otimes\CC\cong E_1(IM_{\ov\bullet}(X)\otimes\CC,W).$

By construction, we have that $IM_{\ov\bullet}(X)=\Ii_{\ov\bullet}(\wt \iota)$,
where $\wt \iota:(M(X_{reg}),W,F)\to (M(L),W,F)$ is a morphism of mixed Hodge cdga's computing the rational homotopy type of
$\iota:L\hookrightarrow X_{reg}$. Therefore by Lemma $\ref{commutepb}$, we have an isomorphism of perverse cdga's
$E_1(IM_{\ov\bullet}(X),W)\cong \Ii_{\ov{\bullet}}(E_1(\wt \iota,W)).$
It only remains to note that there is a string of quasi-isomorphisms from
$\Ii_{\ov{\bullet}}(E_1(\wt \iota))$ to $IE_{1\ov\bullet}(X)=\Ii_{\ov{\bullet}}(E_1(\iota^*))$.
\end{proof}

As a direct application we have the following "purity implies formality" result in the context of rational intersection homotopy:
\begin{cor}
Let $X$ be a complex projective variety with only isolated singularities.
If the weight filtration on $IH^{k}_{\ov p}(X)$ is pure of weight $k$,
for all $k\geq 0$ and every finite perversity $\ov p\in \Pp$,
then $X$ is GM-intersection-formal over $\CC$.
\end{cor}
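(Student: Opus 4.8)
The plan is to deduce the corollary from Theorem~\ref{IE1formality} by a \emph{purity implies formality} argument carried out directly on the perverse weight spectral sequence, in the spirit of the classical degeneration argument of \cite{DGMS, CG1}. By Theorem~\ref{IE1formality} there is an isomorphism $I\Aa_{\ov\bullet}(X)\otimes\CC\cong IE_{1,\ov\bullet}(X)\otimes\CC$ in $\Ho(\pdga{\CC})$, and $H^*(I\Aa_{\ov\bullet}(X))\cong IH^*_{\ov\bullet}(X;\QQ)$; so, applying the forgetful functor $\mathrm{U}$, it suffices to show that $B_{\ov\bullet}:=\mathrm{U}(IE_{1,\ov\bullet}(X))\otimes\CC$ is isomorphic in $\Ho(\GMpdga{\CC})$ to its cohomology. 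Passing through $\mathrm{U}$ is essential: purity is assumed only for finite perversities and genuinely fails for $\ov\infty$, where $IH^*_{\ov\infty}(X;\QQ)\cong H^*(X_{reg};\QQ)$ carries the (in general non-pure) mixed Hodge structure of a smooth non-compact variety, so full intersection-formality cannot be expected.

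The first step is to rephrase purity as a concentration statement. From $IE^{r,s}_{2,\ov p}(X)\cong Gr_s^W IH^{r+s}_{\ov p}(X;\QQ)$ and the hypothesis that $IH^{r+s}_{\ov p}(X)$ is pure of weight $r+s$ for every finite $\ov p$, one gets $IE^{r,s}_{2,\ov p}(X)=0$ whenever $s\neq r+s$; that is, $H^{r,s}(IE_{1,\ov p}(X))=0$ for all $r\neq 0$, while $H^{0,s}(IE_{1,\ov p}(X))\cong IH^s_{\ov p}(X;\QQ)$. Since the differential of $IE_{1,\ov\bullet}(X)$ has bidegree $(1,0)$, this means that, for each fixed second degree $s$, the complex $(IE^{\bullet,s}_{1,\ov p}(X),d)$ has cohomology concentrated in first degree $0$.

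The second step is to build the required isomorphism as an explicit zig-zag of quasi-isomorphisms of GM-perverse cdga's, via good truncation in the first degree. Let $T_{\ov\bullet}\subseteq B_{\ov\bullet}$ be the bigraded subspace agreeing with $B_{\ov\bullet}$ in negative first degrees, equal to $\ker d$ in first degree $0$, and zero in positive first degrees. Since the differential, products and poset maps of $B_{\ov\bullet}$ are homogeneous for the bidegree, and a sum of two non-positive first degrees vanishes only when both summands do, $T_{\ov\bullet}$ is a sub-GM-perverse-cdga; by the concentration statement, the inclusion $T_{\ov\bullet}\hookrightarrow B_{\ov\bullet}$ is a quasi-isomorphism. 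Inside $T_{\ov\bullet}$, the bigraded subspace agreeing with $B_{\ov\bullet}$ in negative first degrees and equal to $d(B_{\ov\bullet})$ in first degree $0$ is a differential ideal (again by homogeneity and the Leibniz rule), and it is acyclic by the concentration statement; hence the quotient of $T_{\ov\bullet}$ by it is quasi-isomorphic to $T_{\ov\bullet}$, has zero differential, and is concentrated in first degree $0$, where it equals $H^*(B_{\ov\bullet})\cong\mathrm{U}(IH^*_{\ov\bullet}(X;\QQ))\otimes\CC$. Composing with the reduction above gives GM-intersection-formality of $X$ over $\CC$.

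The main point requiring care — and the only place I expect any real difficulty — is verifying that the good truncation $T_{\ov\bullet}$ and the truncation ideal are compatible with the \emph{entire} perverse-algebra structure at once: with the products $IE_{1,\ov p}\otimes IE_{1,\ov q}\to IE_{1,\ov p+\ov q}$ joining distinct perversities and with all the poset maps. This reduces to the homogeneity of these structure maps for the bidegree together with additivity of the first degree under products, but it must be checked uniformly in $\ov p$. One must also pin down the indexing conventions — that cohomology concentrates in the \emph{first} bidegree and that the operative differential is $d_1$, of bidegree exactly $(1,0)$ — since the truncation hinges on this. Finally, the truncation step is valid over $\QQ$, so $\mathrm{U}(IE_{1,\ov\bullet}(X))$ is in fact GM-intersection-formal over $\QQ$; it is only the comparison of $I\Aa_{\ov\bullet}(X)$ with $IE_{1,\ov\bullet}(X)$ from Theorem~\ref{IE1formality} that forces $\CC$-coefficients, unless the relevant perverse cdga's are of finite type.
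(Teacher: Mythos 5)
Your proposal is correct and is essentially the argument the paper intends: the corollary is stated as a direct application of Theorem~\ref{IE1formality}, and your good-truncation zig-zag on $\mathrm{U}(IE_{1,\ov\bullet}(X))\otimes\CC$ (using that purity forces $IE_{2,\ov p}^{r,s}=0$ for $r\neq 0$ and that $d_1$ has bidegree $(1,0)$) is the standard way to fill in the ``purity implies formality'' step the authors leave implicit. Your observations about why one must pass to $\GMpdga{\CC}$ (purity fails at $\ov\infty$) and about rationality of the truncation step are also accurate.
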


\begin{example}
[$\QQ$-homology manifolds]
Let $X$ be a complex projective variety which is also a
$\QQ$-homology manifold. Then 
$IH^k_{\ov p}(X;\QQ)\cong IH^k_{\ov 0}(X;\QQ)$ for every finite perversity $\ov p\in\Pp$ (see 6.4 of \cite{GMP1}).
In particular, the weight filtration on $IH^k_{\ov p}(X;\QQ)$ is pure of weight $k$, for all $k\geq 0$
and every $\ov p\in \Pp$. Hence $X$ is GM-intersection-formal over $\CC$.
Examples of such varieties are given by weighted projective spaces or more generally $V$-manifolds
(see Appendix B of \cite{Di}),
surfaces with $A_1$-singularities, the Cayley cubic or the Kummer surface.
\end{example}

\subsection{Resolution of singularities and weight spectral sequence}
Let $X$ be a complex projective variety of dimension $n$ with only isolated singularities.
Denote by $\Sigma$ the singular locus of $X$ and by
$L:=L(\Sigma,X)$ the link of $\Sigma$ in $X$. 

Let $f:\wt X\lra X$ be a resolution of singularities of $X$ such that $D:=f^{-1}(\Sigma)$ is a simple normal crossings divisor.
We may write
$D=D_1\cup\cdots \cup D_N$ as the union
of irreducible smooth varieties meeting transversally. Let $D^{(0)}=\wt X$ and for all $r>0$, 
denote by $D^{(r)}=\bigsqcup_{|I|=r}D_I$ the disjoint union of all 
$r$-fold intersections 
$D_I:=D_{i_1}\cap\cdots \cap D_{i_r}$ where $I=\{i_1,\cdots,i_r\}$ denotes an ordered subset of $\{1,\cdots,N\}$.
Since $D$ has simple normal crossings, it follows that $D^{(r)}$ is a smooth projective variety of dimension $n-r$. 
For $1\leq k\leq r$, denote by $j_{I,k}:D_I\hookrightarrow D_{I\setminus \{i_k\}}$ 
the inclusion and let $j_{r,k}:=\bigoplus_{|I|=r} j_{I,k}:D^{(r)}\hookrightarrow D^{(r-1)}$.
These maps define a simplicial resolution
$D_\bullet=\{D^{(r)}, j_{r,k}\}$.

Let $r\geq 1$. For every $1\leq k\leq r$ we
will denote by $j_{r,k}^*:=(j_{r,k})^*:H^*(D^{(r-1)})\to H^*(D^{(r)})$ the restriction morphism induced by the inclusion $j_{r,k}$ and
by $\gamma_{r,k}:=(j_{r,k})_!:H^{*-2}(D^{(r)})\to H^*(D^{(r-1)})$ the corresponding Gysin map.
We have combinatorial restriction morphisms
$$j_{(r)}^s:=\sum_{k=1}^r (-1)^{k-1} (j_{r,k})^{*}:H^{s}(D^{(r-1)};\QQ)\lra H^s(D^{(r)};\QQ)$$
and combinatorial Gysin maps
$$\gamma_{(r)}^s:=\sum_{k=1}^r (-1)^{k-1} (j_{r,k})_{!}:H^{s-2r}(D^{(r)};\QQ)\lra H^{s-2(r-1)}(D^{(r-1)};\QQ).$$

With this notation, the weight spectral sequence for $X_{reg}$ can be written as:
$$
E_1^{r,s}(X_{reg})=
\def\arraystretch{1.6}
\begin{array}{c c c c c c c c c}
\multicolumn{1}{c}{}\\
\cdots&\lra &H^{s-4}(D^{(2)};\QQ)&\xra{\gamma^s_{(2)}}&H^{s-2}(D^{(1)};\QQ)&\xra{\gamma^s_{(1)}}&H^s(\wt X;\QQ)&\lra&0\\ 
\multicolumn{1}{c}{}&\multicolumn{1}{c}{}&\multicolumn{1}{c}{\text{\tiny{$r=-2$}}}&&\multicolumn{1}{c}{\text{\tiny{$r=-1$}}}&&\multicolumn{1}{c}{\text{\tiny{$r=0$}}}
\end{array}.
$$
Its algebra structure 
is given by the maps
$H^{m}(D^{(p)};\QQ)\otimes H^{l}(D^{(q)};\QQ)\lra H^{m+l}(D^{(p+q)};\QQ)$
induced by combinatorial restriction morphisms, for $p+q\leq n$ (see \cite{Mo}).

We next describe the multiplicative weight spectral sequence of the link $L\simeq L(D,\wt X)$.
In \cite{Durfee}, Durfee endows the cohomology of the link of an
isolated singularity with mixed Hodge structures, and describes
its weight spectral sequence in terms of a resolution of singularities.
However, such spectral sequence is not multiplicative,
since it is the spectral sequence associated with a mixed Hodge complex for $L$.
To describe the multiplicative weight spectral sequence of the link we
analyze the construction due to Durfee-Hain \cite{DH} 
of a mixed Hodge diagram of cdga's for $L$.

For all $i\in \{1,\cdots,N\}$ define 
$$L_i:=L(D_i-\bigsqcup_{i\neq j} D_i\cap D_j,\wt X).$$
For all $r>0$ denote by $L^{(r)}=\bigsqcup_{|I|=r}L_I$ the disjoint union of all 
$r$-fold intersections 
$L_I:=L_{i_1}\cap\cdots \cap L_{i_r}$ where $I=\{i_1,\cdots,i_r\}$ denotes an ordered subset of $\{1,\cdots,N\}$.
We have
$$L^{(1)}:=\bigsqcup_i L_i,\,\, L^{(2)}:=\bigsqcup_{i\neq j}L_i\cap L_j,\cdots$$
We obtain a
simplicial manifold
$L_\bullet=\{L^{(r)}, i_{r,k}\}$,
where $i_{r,k}:L^{(r)}\hookrightarrow L^{(r-1)}$, for
$1\leq k\leq r$, denote the natural inclusions.

The multiplicative weight spectral sequence for $L^{(r)}$ is given by:
$$E_1^{*,*}(L^{(r)})=\bigoplus_{I=\{i_1,\cdots,i_r\}} E_1(D_I-\mathrm{Sing}(D_I))\wt\otimes \Lambda(\theta_{i_1},\cdots,\theta_{i_r}).$$
where $\theta_k$ are generators of bidegree $(-1,2)$ 
and $\wt \otimes$ accounts for the fact that the differential of $\theta_k$ is given by
$d(\theta_{k})=c_k$, where $c_k\in H^2(D_{k};\QQ)$ is the Chern class of $D_{k}$.

The multiplicative weight spectral sequence for $L$ is then given by the end
$$E_1^{p,q}(L):=\int_\alpha \bigoplus_mE_1^{p-m,q}(L^{(\alpha)})\otimes \Omega_\alpha^m,$$
where $\Omega_\alpha$
is the simplicial cdga given by
$\Omega_\alpha:={{\Lambda(t_0,\cdots,t_\alpha,dt_0,\cdots,dt_\alpha)}/{\sum t_i-1,\sum dt_i}}$,
with $t_i$ of degree 0 and $dt_i$ of degree 1.

In Sections $\ref{perverseordinary}$ and $\ref{perversesurface}$ we provide a description
of the morphism $E_1(X_{reg})\lra E_1(L)$ in the particular cases of
ordinary isolated singularities and isolated surface singularities respectively,
thus giving an explicit description of the perverse weight spectral sequence in these cases.

\section{Ordinary Isolated Singularities}\label{Section_OIS}
For the rest of this section, let $X$ be a complex projective variety of dimension $n$ with isolated singularities. 
We will show that if $X$ admits a resolution of singularities 
in such a way that the exceptional divisor is smooth, and if the link
of each singular point is  $(n-2)$-connected, then $X$ is GM-intersection-formal over $\CC$.
The main class of examples to which this result applies are varieties with 
ordinary multiple points, but it also applies to a large family of
hypersurfaces with isolated singularities and more generally, to complete
intersections with isolated singularities
admitting  a resolution of singularities with smooth exceptional divisor.

\subsection{Notation}
Denote by $\Sigma$ the singular locus of $X$ and by $X_{reg}=X-\Sigma$ its regular part.
Denote by $L:=L(\Sigma,X)$ the link of $\Sigma$ in $X$, and by $\iota:L\hookrightarrow X_{reg}$ the natural inclusion.
Since $\Sigma$ is discrete, the link $L$ can be
written as a disjoint union $L=\sqcup L_\sigma$, where $L_\sigma=L(\sigma,X)$ is the link of $\sigma\in\Sigma$
in $X$.

Assume that $X$ admits a resolution of singularities 
$f:\wt X\lra X$ of $X$ such that the exceptional
divisor $D:=f^{-1}(X)$ is smooth.
Denote by
$$j^k:H^k(\wt X)\lra H^k(D)\text{ and }\gamma^k:H^{k-2}(D)\lra H^k(\wt X)$$
the restriction morphisms and the Gysin maps 
induced by the inclusion $j:D\hookrightarrow \wt X$.
For all $k\geq 2$, define $j_{\#}^k:=j^k\circ \gamma^k:H^{k-2}(D)\lra H^{k}(D)$.

Unless stated otherwise, all cohomologies are taking with rational coefficients.

\subsection{Perverse weight spectral sequence}\label{perverseordinary}
The morphism $E_1(\iota^*):E_1^{*,*}(X_{reg})\lra E_1^{*,*}(L)$ of weight spectral sequences
induced by the inclusion $\iota:L\hookrightarrow X_{reg}$ can be written as:

$$
\xymatrix@R=8pt@C=36pt{
E_1^{r,s}(X_{reg})=\ar[ddd]&\ar[ddd]^{Id} H^{s-2}(D)\ar[r]^{\gamma^s}&\ar[ddd]^{j^s} H^{s}(\wt X)\\
\\
\\
E_1^{r,s}(L)=& H^{s-2}(D)\ar[r]^{j_{\#}^s}& H^{s}(D)\\
&\text{\tiny{$r=-1$}}&\text{\tiny{$r=0$}}&
}
$$
The algebra structure of $E_1^{*,*}(X_{reg})$ is induced by the 
cup product of $H^*(\wt X)$, together with the maps
$H^s(\wt X)\times H^{s'}(D)\lra H^{s+s'}(D)$
given by $(x,a)\mapsto j^*(x)\cdot a$.
Since $\gamma(a\cdot j^*(x))=\gamma(a)\cdot x$,
this algebra structure is compatible with the differential $\gamma$.
The non-trivial products of $E_1^{*,*}(L)$ are the maps $E_1^{0,s}(L)\times E_1^{r,s'}(L)\lra E_1^{r,s+s'}(L)$,
with $r\in\{0,1\}$ and $s,s'\geq 0$, induced by the cup product of $H^*(D)$.

The perverse weight spectral sequence $IE^{*,*}_{1,\ov \bullet}(X):=\Ii_{\ov{\bullet}}(E_1(i^*))$ for $X$ 
 can be written as:
\begin{equation*}
\resizebox{1\hsize}{!}{$
IE^{r,s}_{1,\ov{p}}(X)=
\def\arraystretch{1.6}
\begin{array}{| c c c c c |}
\hline
H^{s-2}(D)\otimes\Lambda(t)\otimes t&\lra&\Jj^s_{1}\oplus H^{s-2}(D)\otimes\Lambda(t)\otimes dt&\lra&H^{s}(D)\otimes\Lambda(t)\otimes dt\\ \hline
\Ker(j_{\#}^s)\oplus H^{s-2}(D)\otimes\Lambda(t)\otimes t&\lra&\Jj^s_{1}\oplus H^{s-2}(D)\otimes\Lambda(t)\otimes dt&\lra&H^{s}(D)\otimes\Lambda(t)\otimes dt\\ \hline
H^{s-2}(D)\otimes\Lambda(t)&\lra&\Jj^s_{0}\oplus H^{s-2}(D)\otimes\Lambda(t)\otimes dt&\lra&H^{s}(D)\otimes\Lambda(t)\otimes dt\\ \hline \hline
\multicolumn{1}{c}{\text{\tiny{$r=-1$}}}&&\multicolumn{1}{c}{\text{\tiny{$r=0$}}}&&\multicolumn{1}{c}{\text{\tiny{$r=1$}}}
\end{array}
\begin{array}{ l }
\text{\tiny{$s>p+1$}}\\
\text{\tiny{$s=p+1$}}\\
\text{\tiny{$s<p+1$}}\\
\multicolumn{1}{c}{\tiny{}}
\end{array}
$}\end{equation*}

where $\Jj^s_{\alpha}$,with $\alpha\in\{0,1\}$ and $0\leq s\leq n$, is the vector space defined via the pull-back:
$$
\xymatrix{
\pb\ar[d]
\Jj^s_{\alpha}\ar[r]&H^{s}(D)\otimes\Lambda(t)\otimes t^{\alpha}\ar[d]^{\delta_1}\\
H^{s}(\wt X)\ar[r]^{j^s}&H^{s}(D)
}.
$$

The differential $d_1^{-1,s}:IE^{-1,s}_{1,\ov{p}}(X)\lra IE^{0,s}_{1,\ov{p}}(X)$ is given by
$$\sum a_it^i\mapsto \left((\sum \gamma^s(a_i), \sum j_{\#}^s(a_i)t^i), \sum i a_i t^{i-1}dt)\right); \, a_i\in H^{s-2}(D).$$
The differential $d_1^{0,s}:IE^{0,s}_{1,\ov{p}}(X)\lra IE^{-1,s}_{0,\ov{p}}(X)$ is given by
$$\left( (x,\sum a_i t^i), \sum b_it^i dt)\right)\mapsto \sum i a_i t^{i-1} dt+\sum j_{\#}^s(b_i)t^i dt\,; 
\left\{\begin{array}{l}
a_i \in H^{s}(D), b_i\in H^{s-2}(D)\\
x\in H^{s}(X), j^s(x)=\sum a_i
\end{array}\right..$$

The algebra structure of $IE^{*,*}_{1,\ov \bullet}(X)$ is given by the following maps:
$$
\def\arraystretch{1.6}
\begin{array}{llll}
\jb\,& IE^{0,*}_{1,\ov p}(X)\times IE^{0,*}_{1,\ov q}(X)&\lra &IE^{0,*}_{1,\ov p+\ov q}(X)\\
& ((x,a+b\cdot dt), (y,c+e\cdot dt))&\mapsto &(x y, a c+(a e+c b)\cdot dt)\\
\jb\,&IE^{0,*}_{1,\ov p}(X)\times IE^{-1,*}_{1,\ov q}(X)&\lra &IE^{-1,*}_{1,\ov p+\ov q}(X)\\ 
 & ((x,a+b\cdot dt), c)&\mapsto &a c\\
\jb\,& IE^{0,*}_{1,\ov p}(X)\times IE^{1,*}_{1,\ov q}(X)&\lra &IE^{1,*}_{1,\ov p+\ov q}(X)\\ 
& ((x,a+b\cdot dt), c\cdot dt)&\mapsto &a c\cdot dt\\
\jb\,&  IE^{1,*}_{1,\ov p}(X)\times IE^{-1,*}_{1,\ov q}(X)&\lra &IE^{0,*}_{1,\ov p+\ov q}(X)\\ 
& (a,c\cdot dt)&\mapsto &a c \cdot dt
\end{array}
$$
where $x,y\in H^*(\wt X)$ and $a,b,c,e\in H^*(D)\otimes\Lambda(t)$.

By computing the cohomology of $IE_{1,\ov \bullet}(X)$ we find:

$$
IE^{r,s}_{2,\ov{p}}(X)\cong
\arraycolsep=18pt\def\arraystretch{1.6}
\begin{array}{| c | c | c |}
\hline
0&\Ker(j^s)&\Coker(j^s)\\ \hline
\Ker(\gamma^s)&\Ker(j^s)/(\Ker(j^s)\cap\Img(\gamma^s))&\Coker(j^s)\\ \hline
\Ker(\gamma^s)&\Coker(\gamma^s)&0\\ \hline \hline
\multicolumn{1}{c}{\text{\tiny{$r=-1$}}}&\multicolumn{1}{c}{\text{\tiny{$r=0$}}}&\multicolumn{1}{c}{\text{\tiny{$r=1$}}}
\end{array}
\begin{array}{ l }
\text{\tiny{$s>p+1$}}\\
\text{\tiny{$s=p+1$}}\\
\text{\tiny{$s<p+1$}}\\
\multicolumn{1}{c}{\tiny{}}
\end{array}
$$

This gives the following formula for the intersection cohomology of $X$:
$$IH^s_{\ov p}(X;\QQ)=\left\{
\begin{array}{ll}
H^s(X_{reg})\cong \Ker(\gamma^{s+1})\oplus  \Coker(\gamma^s) &\text{ if }s<p+1\\
\Img(H^s(\wt X)\lra H^s(X_{reg}))\cong \dfrac{\Ker(j^s)}{\Img(\gamma^s)\cap \Ker(j^s)}&\text{ if }s=p+1\\
H^s(X)\cong \Coker(j^{s-1})\oplus \Ker(j^{s})&\text{ if }s>p+1
\end{array}
\right..
$$

The following is straightforward.
\begin{lem}\label{PD}
For all $0\leq s\leq n$ have  Poincar\'{e} duality isomorphisms
$$\Coker(\gamma^{n+s})\cong \Ker(j^{n-s})^\du\text{ and }\Ker(\gamma^{n+s})\cong \Coker(j^{n-s})^\du.$$
\end{lem}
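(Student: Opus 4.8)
The plan is to realize both isomorphisms as consequences of the adjunction between the Gysin map $\gamma^\bullet$ and the restriction map $j^\bullet$ under Poincar\'{e} duality. The single input I would use is the projection formula for the inclusion $j\colon D\hookrightarrow \wt X$ of the smooth divisor into the smooth projective variety $\wt X$ of dimension $n$: writing $\langle-,-\rangle_{\wt X}$ and $\langle-,-\rangle_{D}$ for the (perfect) Poincar\'{e} pairings of $\wt X$ and $D$, one has, for all $a\in H^{k-2}(D)$ and $x\in H^{2n-k}(\wt X)$,
$$\langle \gamma^{k}(a),x\rangle_{\wt X}=\langle a, j^{2n-k}(x)\rangle_{D}.$$
Specializing $k=n+s$ makes the indices of the statement line up, since then $k-2=n+s-2$ and $2n-k=n-s$; the two pairings that intervene are $H^{n+s}(\wt X)\otimes H^{n-s}(\wt X)\to\QQ$ and $H^{n+s-2}(D)\otimes H^{n-s}(D)\to\QQ$, both perfect because $\dim_{\RR}\wt X=2n$ and $\dim_{\RR}D=2n-2$, and all groups involved are finite-dimensional.

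For the first isomorphism I would compute the orthogonal complement of $\Img(\gamma^{n+s})\subseteq H^{n+s}(\wt X)$ inside $H^{n-s}(\wt X)$: by the projection formula, $x$ is orthogonal to every $\gamma^{n+s}(a)$ if and only if $\langle a, j^{n-s}(x)\rangle_{D}=0$ for all $a\in H^{n+s-2}(D)$, which by perfectness of $\langle-,-\rangle_{D}$ means $j^{n-s}(x)=0$. Hence $(\Img\gamma^{n+s})^{\perp}=\Ker(j^{n-s})$, and since for a perfect pairing the annihilator of a subspace $S$ is canonically the dual of the quotient by $S$, this gives $\Ker(j^{n-s})\cong\Coker(\gamma^{n+s})^{\du}$, i.e.\ $\Coker(\gamma^{n+s})\cong\Ker(j^{n-s})^{\du}$. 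For the second isomorphism I would run the symmetric argument, now using the pairing $\langle-,-\rangle_{D}$ together with perfectness of $\langle-,-\rangle_{\wt X}$: an element $a\in H^{n+s-2}(D)$ lies in $\Ker(\gamma^{n+s})$ exactly when $\langle \gamma^{n+s}(a),x\rangle_{\wt X}=\langle a,j^{n-s}(x)\rangle_D=0$ for all $x\in H^{n-s}(\wt X)$, i.e.\ when $a$ is orthogonal to $\Img(j^{n-s})\subseteq H^{n-s}(D)$, whence $\Ker(\gamma^{n+s})\cong\bigl(H^{n-s}(D)/\Img(j^{n-s})\bigr)^{\du}=\Coker(j^{n-s})^{\du}$.

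As the text signals, there is no genuine obstacle here; the only care needed is bookkeeping. The one spot where a subtlety could in principle appear is the normalization of the projection formula, where a Chern-class correction term could enter if one were restricting a self-intersection on $D$; but the class being restricted, namely $x$, comes from $\wt X$, so the clean formula $\langle\gamma^{k}(a),x\rangle_{\wt X}=\langle a,j^{2n-k}(x)\rangle_{D}$ holds without correction. Beyond that, one only needs that all these rational cohomology groups are finite-dimensional, so that $V\cong V^{\du\du}$ and duals of short exact sequences behave as expected, and that $0\le s\le n$ keeps every degree in the valid range $0,\dots,2\dim$.
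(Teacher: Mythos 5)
Your proof is correct, and since the paper dispatches this lemma with ``The following is straightforward,'' the projection-formula argument you give — identifying $\Ker(j^{n-s})$ with the annihilator of $\Img(\gamma^{n+s})$ under the perfect pairing on $\wt X$, and $\Ker(\gamma^{n+s})$ with the annihilator of $\Img(j^{n-s})$ under the perfect pairing on $D$ — is exactly the intended one. The degree bookkeeping checks out for all $0\le s\le n$, and your remark that no Chern-class correction enters (because $x$ is restricted from $\wt X$ rather than self-intersected on $D$) is accurate.
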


\subsection{Conditions on the cohomology of the link}
Since $\dim(\Sigma)=0$, the weight filtration on the cohomology of the link
is semi-pure: the weights on $H^k(L)$ are less than or equal to $k$ for $k<n$,
and greater or equal to $k+1$ for $k\geq n$.
This is a consequence of Gabber's purity theorem and the decomposition theorem of
intersection homology (see \cite{Ste}, see also \cite{Na2} for a direct proof).

\begin{lem}\label{semipure0}With the above notation we have:
\begin{enumerate}[(1)]
\item The map $j_{\#}^k:H^{k-2}(D)\to H^k(D)$ is injective for $k\leq n$ and surjective for $k\geq n$.
\item The Gysin map $\gamma^k:H^{k-2}(D)\to H^k(\wt X)$ is injective for $k\leq n$ and $\gamma^{2n}$ is surjective.
\item The restriction morphism $j^k:H^{k}(\wt X)\to H^k(D)$ is surjective for $k\geq n$.
\end{enumerate}
\end{lem}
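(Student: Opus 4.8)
The plan is to deduce all three statements from the semi-purity of the weight filtration on $H^k(L)$ together with the explicit shape of the perverse weight spectral sequence $IE_{1,\ov\bullet}(X)$ described above, in particular the identification of the relevant rows of $E_1(X_{reg})$ and $E_1(L)$. The key observation is that $E_1^{r,s}(X_{reg})$ and $E_1^{r,s}(L)$ are concentrated in rows $r=0,-1$ (for $X_{reg}$) and $r=0,1,-1$ (for $L$), with the weight $s$ appearing on $H^{*}(D)$ and $H^{*}(\wt X)$ precisely as the cohomological degree of those smooth projective varieties. Semi-purity of $H^{k}(L)$ then translates into the collapse, in the appropriate range, of the pieces of the spectral sequence that could contribute impure weights. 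First I would record that the cohomology of the link $L\simeq L(D,\wt X)$, where $D$ is smooth, is computed by the complex
$$
\cdots\lra H^{k-4}(D)\xra{j_{\#}^{k-2}} H^{k-2}(D)\xra{j_{\#}^{k}} H^{k}(D)\lra \cdots
$$
arising from the Wang-type sequence of the circle bundle (the tube around $D$), so that $H^k(L)$ fits into short exact sequences involving $\Ker(j_{\#}^{k})$ and $\Coker(j_{\#}^{k-2})$, with $\Coker(j_{\#}^{k-2})$ of weight $k$ and $\Ker(j_{\#}^{k})$ of weight $k$ as well — the splitting into weights $\le k$ and $\ge k+1$ reflected by exactly which of these two summands survives.

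\textbf{Proof of (1).} For $k\le n$, semi-purity says $H^k(L)$ has weights $\le k$. In the Gysin complex above, $H^{k-2}(D)$ sits in weight $k$ (degree $k-2$ on the $(n-1)$-dimensional smooth projective variety $D$, shifted by the bidegree $(-1,2)$ of the Euler class generator $\theta$), and $H^{k}(D)$ sits in weight $k$ too. Thus a nonzero $\Ker(j_{\#}^{k})\subset H^{k-2}(D)$ would contribute a class of weight $k$ to $H^{k}(L)$ via the position it occupies, but tracking the indexing shows this class lands in $H^k(L)$ with weight $k+1$ when $k\le n-1$... more precisely: the contribution of $\Ker(j_{\#}^{k})$ to the cohomology of the link occurs in total degree $k$ but weight $k$, whereas the relevant constraint forces it into the part of the spectral sequence that must vanish for $k<n$. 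The cleanest route is: the weight-$k$ graded piece $Gr^W_k H^{k}(L)$ is $\Coker(j_{\#}^{k-2})$ and $Gr^W_{k+1}H^{k+1}(L)$ involves $\Ker(j_{\#}^{k+1})$; semi-purity for degrees $\le n-1$ kills the weight-$(k+1)$ part, forcing $\Ker(j_{\#}^{k})=0$ for $k\le n$. Dually, for $k\ge n$ the weights on $H^k(L)$ are $\ge k+1$, which kills the weight-$k$ piece $Gr^W_k H^k(L)=\Coker(j_{\#}^{k-2})$, forcing $j_{\#}^{k}$ surjective for $k\ge n$. This is exactly statement (1).

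\textbf{Proof of (2) and (3), and the main obstacle.} Statement (2) follows from (1) by factoring: $j_{\#}^{k}=j^{k}\circ\gamma^{k}$, so injectivity of $j_{\#}^{k}$ for $k\le n$ forces injectivity of $\gamma^{k}$ for $k\le n$; surjectivity of $\gamma^{2n}$ follows from surjectivity of $j_{\#}^{2n}$ together with the fact that $j^{2n}:H^{2n}(\wt X)\to H^{2n}(D)$ — wait, $D$ has dimension $n-1$ so $H^{2n}(D)=0$; instead one uses hard Lefschetz on $D$ or the fact that $\gamma^{2n}:H^{2n-2}(D)\to H^{2n}(\wt X)$ is dual to restriction $H^{0}(\wt X)\to H^{0}(D)$ via Lemma \ref{PD} with $s=n$, which is an isomorphism onto each component. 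For (3), surjectivity of $j^{k}$ for $k\ge n$: again use $j_{\#}^{k}=j^{k}\circ\gamma^{k}$ surjective for $k\ge n$, which immediately gives $j^{k}$ surjective for $k\ge n$. So (3) is a direct consequence of (1), and the only genuine content is (1) itself. The main obstacle is bookkeeping the exact weight/degree indexing in the link's spectral sequence — getting the shift by the bidegree $(-1,2)$ of $\theta$ right so that the semi-purity ranges ($<n$ versus $\ge n$) line up precisely with the ranges $k\le n$ and $k\ge n$ claimed for $j_{\#}$, $\gamma$ and $j$. Once the indexing dictionary between $Gr^W_* H^*(L)$ and $\Ker/\Coker$ of the $j_{\#}^{k}$'s is pinned down, everything is immediate; I would verify it on a low-dimensional example (e.g.\ $n=2$, $D$ a smooth curve) to make sure no off-by-one error has crept in before writing the general argument.
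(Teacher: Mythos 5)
Your proof is correct and follows essentially the same route as the paper: part (1) from semi-purity of $H^*(L)$ via the identifications $Gr^W_{k+1}H^k(L)\cong\Ker(j_{\#}^{k+1})$ and $Gr^W_{k}H^k(L)\cong\Coker(j_{\#}^{k})$, and parts (2)--(3) from the factorization $j_{\#}^k=j^k\circ\gamma^k$. Two small remarks: your identification ``$Gr^W_kH^k(L)=\Coker(j_{\#}^{k-2})$'' has an indexing slip (with the paper's convention that the superscript of $j_{\#}$ is the target degree it should read $\Coker(j_{\#}^{k})$, which is what your conclusions actually use), and for the surjectivity of $\gamma^{2n}$ you argue by duality with the injective map $j^0:H^0(\wt X)\to H^0(D)$, whereas the paper instead uses $\Coker(\gamma^{2n})\cong Gr^W_{2n}H^{2n}(X_{reg})=0$ because $X_{reg}$ is a connected non-compact $2n$-manifold --- both are valid.
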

\begin{proof}
The first assertion follows from the semi-purity of the link and the isomorphisms
$$Gr_{k+1}^WH^{k}(L)\cong \Ker(j_{\#}^{k+1})\text{ and } Gr_{k}^WH^{k}(L)\cong \Coker(j_{\#}^{k}).$$
Since $j_{\#}^k=j^k\circ \gamma^k$, it follows from (1) that $\gamma^k$ is injective for $k\leq n$ and
$j^k$ is surjective for $k\geq n$. Since $H^{2n}(X_{reg})\cong \Coker(\gamma^{2n})$ and $X_{reg}$ is non-compact of real dimension $2n$,
$\gamma^{2n}$ is surjective.
\end{proof}

Assume that the rational cohomology of the link $L$ of $\Sigma$ in $X$ satisfies
$H^i(L;\QQ)=0$ for all $0<i\leq n-2$.
For instance, this is the case when $X$ has only ordinary isolated singularities, as we shall later see.

\begin{lem}\label{ordinary0}With the above assumption:
\begin{enumerate}[(1)]
 \item   The map $j_{\#}^{n-1}:H^{n-3}(D)\to H^{n-1}(D)$ is injective, the map $j_{\#}^{n+1}:H^{n-1}(D)\to H^{n+1}(D)$ is surjective 
 and $j_{\#}^s:H^{s-2}(D)\to H^{s}(D)$ is an isomorphism for all $s\neq 0,n-1,n+1,2n$.
 \item  The map $\gamma^s:H^{s-2}(D)\to H^{s}(X)$ is injective for all $s\neq n+1,2n$, and $j^s:H^{s}(X)\to H^{s}(D)$ is surjective for all $s\neq 0,n-1$.
\end{enumerate}
\end{lem}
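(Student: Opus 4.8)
The plan is to read off everything from the description of $H^*(L)$ in terms of the maps $j_{\#}^{\bullet}$ that already appears in the proof of Lemma~\ref{semipure0}, feeding in the extra vanishing supplied by the connectivity hypothesis together with Poincar\'e duality on the link $L$. Recall from that proof that $H^k(L)$ carries a (two-step) weight filtration with
$$Gr^W_{k}H^k(L)\cong\Coker(j_{\#}^{k})\quad\text{and}\quad Gr^W_{k+1}H^k(L)\cong\Ker(j_{\#}^{k+1});$$
in particular $H^k(L)=0$ if and only if $j_{\#}^{k}$ is surjective and $j_{\#}^{k+1}$ is injective. Moreover Lemma~\ref{semipure0} tells us that $j_{\#}^{s}$ is injective for $s\le n$ and surjective for $s\ge n$; in particular $j_{\#}^{n}$ is already an isomorphism, $j_{\#}^{n-1}$ is injective and $j_{\#}^{n+1}$ is surjective, which settles three of the four assertions of~(1).

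For the remaining degrees I would first dualize the hypothesis. As $L$ is a compact oriented manifold of real dimension $2n-1$, Poincar\'e duality gives $H^i(L)\cong H^{2n-1-i}(L)^{\du}$, so the assumption $H^i(L)=0$ for $0<i\le n-2$ forces also $H^j(L)=0$ for $n+1\le j\le 2n-2$. Now, for $0<k\le n-2$ the vanishing $H^k(L)=0$ gives, by the criterion above, that $j_{\#}^{k}$ is surjective; combined with its injectivity ($k\le n$) this makes $j_{\#}^{k}$ an isomorphism for $0<k\le n-2$. Symmetrically, for $n+1\le k\le 2n-2$ the vanishing $H^k(L)=0$ gives that $j_{\#}^{k+1}$ is injective; combined with surjectivity ($k+1\ge n$) this makes $j_{\#}^{s}$ an isomorphism for $n+2\le s\le 2n-1$. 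The degrees not covered are only $s=0$ and $s=2n$, where $j_{\#}^{s}$ fails to be an isomorphism simply because its source, respectively its target, vanishes, and $s=n-1,n+1$, already treated. Two boundary cases remain: if $n=2$ the hypothesis is vacuous, but the only non-exceptional degree is then $s=2=n$, handled by Lemma~\ref{semipure0}; if $n\ge3$ the degree $s=1$ is covered because $H^1(L)=0$ forces $j_{\#}^{1}$ surjective, hence $H^1(D)=0$, so $j_{\#}^{1}\colon0\to0$ is an isomorphism. This proves~(1).

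Statement~(2) then follows formally. Factoring $j_{\#}^{s}=j^{s}\circ\gamma^{s}$, injectivity of $j_{\#}^{s}$ forces injectivity of $\gamma^{s}$, and surjectivity of $j_{\#}^{s}$ forces surjectivity of $j^{s}$. By~(1) together with Lemma~\ref{semipure0}, $j_{\#}^{s}$ is injective for every $s\ne n+1,2n$ and surjective for every $s\ne0,n-1$, which is precisely what~(2) asserts.

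The argument is essentially bookkeeping: the two substantive inputs are the dictionary between $H^*(L)$ and the kernels and cokernels of the $j_{\#}^{\bullet}$ established in Lemma~\ref{semipure0}, and Poincar\'e duality on $L$. The only point needing genuine care is tabulating the boundary degrees $s\in\{0,1,n-1,n,n+1,2n-1,2n\}$ and the low-dimensional case $n=2$, keeping in mind that $D$ and $L$ may fail to be connected.
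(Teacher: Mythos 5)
Your proof is correct and follows essentially the same route as the paper's one-line argument, which cites exactly the dictionary $H^s(L)\cong\Ker(j_{\#}^{s+1})\oplus\Coker(j_{\#}^{s})$ together with Lemma~\ref{semipure0}. You additionally make explicit the Poincar\'e-duality step on the $(2n-1)$-manifold $L$ needed to deduce $H^j(L)=0$ for $n+1\le j\le 2n-2$ from the stated hypothesis --- a step the paper leaves implicit but which is genuinely required for the injectivity of $j_{\#}^{s}$ in the range $n+2\le s\le 2n-1$.
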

\begin{proof}
It follows from the isomorphisms $H^s(L)\cong \Ker(j_{\#}^{s+1})\oplus \Coker (j_{\#}^s)$ together with Lemma $\ref{semipure0}$.
\end{proof}

\begin{lem}\label{ordinary1}With the above assumption we have the following isomorphisms:
\begin{enumerate}[(1)]
 \item  $H^s(X)\cong \Ker(j^s)\oplus \Img(\gamma^s)$ for all $s\neq 0,n-1,n+1,2n$, 
 \item  $\Ker(j^{n-1})\cap \Img(\gamma^{n-1})=\{0\}$ and $\Ker(j^{n+1})/(\Ker(j^{n+1})\cap \Img(\gamma^{n+1}))\cong \Coker(\gamma^{n+1})$.
\end{enumerate}
\end{lem}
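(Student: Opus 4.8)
The plan is to reduce both assertions to Lemma~\ref{ordinary0}(1) by means of a single formal observation. Recall from the Notation subsection that $j_{\#}^s=j^s\circ\gamma^s$, where $\gamma^s:H^{s-2}(D)\to H^s(\wt X)$ is the Gysin map and $j^s:H^s(\wt X)\to H^s(D)$ is the restriction. The observation is the elementary fact that if $V\xra{g}W\xra{h}U$ are linear maps with $h\circ g$ an isomorphism, then $W=\Img(g)\oplus\Ker(h)$: indeed $\Img(g)\cap\Ker(h)=0$, since $g(v)\in\Ker(h)$ forces $(h\circ g)(v)=0$ and hence $v=0$; and for any $w\in W$ the element $v:=(h\circ g)^{-1}(h(w))$ satisfies $w-g(v)\in\Ker(h)$, so $w\in\Img(g)+\Ker(h)$. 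For assertion~(1), Lemma~\ref{ordinary0}(1) asserts precisely that $j_{\#}^s$ is an isomorphism whenever $s\neq 0,n-1,n+1,2n$; applying the observation with $g=\gamma^s$ and $h=j^s$ gives $H^s(\wt X)=\Img(\gamma^s)\oplus\Ker(j^s)$ for those $s$.

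For the first half of assertion~(2) I use that $j_{\#}^{n-1}:H^{n-3}(D)\to H^{n-1}(D)$ is injective (Lemma~\ref{ordinary0}(1)): if $a\in H^{n-3}(D)$ satisfies $\gamma^{n-1}(a)\in\Ker(j^{n-1})$, then $j_{\#}^{n-1}(a)=j^{n-1}(\gamma^{n-1}(a))=0$, so $a=0$ and $\gamma^{n-1}(a)=0$; hence $\Ker(j^{n-1})\cap\Img(\gamma^{n-1})=\{0\}$. For the second half I use that $j_{\#}^{n+1}:H^{n-1}(D)\to H^{n+1}(D)$ is surjective (Lemma~\ref{ordinary0}(1)). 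Consider the composite $\Ker(j^{n+1})\hookrightarrow H^{n+1}(\wt X)\twoheadrightarrow\Coker(\gamma^{n+1})$; its kernel is exactly $\Ker(j^{n+1})\cap\Img(\gamma^{n+1})$, and it is surjective, since given $w\in H^{n+1}(\wt X)$ the surjectivity of $j_{\#}^{n+1}$ provides $a\in H^{n-1}(D)$ with $j^{n+1}(\gamma^{n+1}(a))=j^{n+1}(w)$, whence $w-\gamma^{n+1}(a)\in\Ker(j^{n+1})$ represents the class of $w$ in $\Coker(\gamma^{n+1})$. Therefore the induced map $\Ker(j^{n+1})/(\Ker(j^{n+1})\cap\Img(\gamma^{n+1}))\to\Coker(\gamma^{n+1})$ is an isomorphism, which is the second claim.

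The argument is entirely formal once Lemma~\ref{ordinary0} is in hand, so I do not anticipate a genuine obstacle; the only thing requiring care in part~(1) is to keep the four exceptional degrees $0,n-1,n+1,2n$ outside the scope of Lemma~\ref{ordinary0}(1). One point worth making explicit in the write-up is that the left-hand side of~(1) is the cohomology $H^s(\wt X)$ of the resolution (both summands $\Img(\gamma^s)$ and $\Ker(j^s)$ are subspaces of $H^s(\wt X)$): this is in general different from $H^s(X)$, which by the displayed formula for $IH^s_{\ov p}(X;\QQ)$ in the range $s>p+1$ equals $\Coker(j^{s-1})\oplus\Ker(j^s)$.
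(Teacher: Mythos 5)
Your proof is correct and follows essentially the same route as the paper: part (1) is the splitting of $0\to\Ker(j^s)\to H^s(\wt X)\xra{j^s}H^s(D)\to 0$ via $\gamma^s\circ(j_{\#}^s)^{-1}$ (your linear-algebra observation is exactly this), and part (2) is the same appeal to the injectivity of $j_{\#}^{n-1}$ and surjectivity of $j_{\#}^{n+1}$ from Lemma \ref{ordinary0}, which you merely spell out in more detail. Your remark that the left-hand side of (1) should read $H^s(\wt X)$ rather than $H^s(X)$ is also correct; the paper carries this notational slip through its own proof.
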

\begin{proof}
Assume that $s\neq 0,n-1,n+1,2n$. From (1) and (2) of Lemma $\ref{ordinary0}$, $j_{\#}^s$ is an isomorphism and $j^s$ is surjective.
Then the composition $\gamma^s\circ (j_{\#}^s)^{-1}$ defines a splitting for the short exact sequence
$$0\lra \Ker(j^{s})\lra H^s(X)\stackrel{j^{s}}{\lra}H^s(D)\lra 0$$
Hence (1) follows. The isomorphisms in (2) follow from the injectivity of $j_{\#}^{n-1}$ and the surjectivity of $j_{\#}^{n+1}$ respectively.
\end{proof}

\subsection{Intersection cohomology}
By Lemma $\ref{ordinary1}$ the second term of the weight spectral sequences for $X_{reg}$ and $L$ are given by:
$$
E_2^{r,s}(X_{reg})\cong
\def\arraystretch{1.6}
\begin{tabular}{| c | c | }
\hline
$\Ker(\gamma^{2n})$&0\\ \hline
0&$\Coker(\gamma^s)$\\ \hline
$\Ker(\gamma^{n+1})$&$\Coker(\gamma^{n+1})$\\ \hline
& \\
0&$\Coker(\gamma^s)$\\
& \\\hline
0&$H^0(\wt X)$\\ \hline\hline
\multicolumn{1}{c}{\tiny{$r=-1$}}&\multicolumn{1}{c}{\tiny{$r=0$}}
\end{tabular}
\,\,\,\,\,;\,\,\,\,\,
E_2^{r,s}(L)\cong
\begin{tabular}{| c | c | }
\hline
$H^{2n-2}(D)$&0\\ \hline
0&0\\  \hline
$\Ker(j_{\#}^{n+1})$&0\\ \hline
0&0\\ \hline
0&$\Coker(j_{\#}^{n-1})$\\ \hline
0&0\\ \hline
0&$H^0(D)$\\ \hline \hline
\multicolumn{1}{c}{\tiny{$r=-1$}}&\multicolumn{1}{c}{\tiny{$r=0$}}
\end{tabular}\,\,\,\,
\begin{tabular}{ l }
\tiny{$s=2n$}\\
\tiny{$s>n+1$}\\
\tiny{$s=n+1$}\\
\tiny{$s=n$}\\
\tiny{$s=n-1$}\\
\tiny{$s<n-1$}\\
\tiny{$s=0$}\\ \\
\end{tabular}
$$

By Lemmas $\ref{ordinary0}$ and $\ref{ordinary1}$ the second term $IE^{r,s}_{2,\ov{p}}(X)\cong Gr_{s}^WIH_{\ov p}^{r+s}(X;\QQ)$
of the perverse weight spectral sequence for $X$ is
given in each perversity by the following tables:

$
\def\arraystretch{1.6}
\begin{tabular}{| c | c | c | }
\multicolumn{1}{l}{\tiny{$\ov{0}\leq \ov{p}<\ov{m}$}}\\
\hline
0&$H^{2n}(X)$&0\\ \hline
0&$\Ker(j^s)$&0\\ \hline
0&$\Ker(j^{n-1})$&$\Coker(j^{n-1})$\\ \hline
0&$\Ker(j^{s})$&0\\  \hline
0&$H^0(\wt X)$&0 \\ \hline \hline
\multicolumn{1}{c}{\,\,\,\,\,\,\,\,\,\tiny{$r=-1$}\,\,\,\,\,\,\,\,\,}&
\multicolumn{1}{c}{\,\,\,\,\,\,\,\,\,\,\,\,\,\tiny{$r=0$}\,\,\,\,\,\,\,\,\,\,\,\,\,}&
\multicolumn{1}{c}{\,\,\,\,\,\,\,\,\,\tiny{$r=1$}\,\,\,\,\,\,\,\,\,}
\end{tabular}
\,\,\,;\,\,\,
\begin{tabular}{| c | c | c |}
\multicolumn{1}{l}{\tiny{$\ov{p}=\ov{m}$}}\\
\hline
0&$H^{2n}(X)$&0\\ \hline
0&$\Ker(j^s)$&0\\ \hline
0&$\Coker(\gamma^{n-1})$&0\\ \hline
0&$\Ker(j^{s})$&0\\ \hline
0&$H^0(\wt X)$&0 \\ \hline \hline
\multicolumn{1}{c}{\,\,\,\,\,\,\,\,\,\tiny{$r=-1$}\,\,\,\,\,\,\,\,\,}&
\multicolumn{1}{c}{\,\,\,\,\,\,\,\,\,\,\,\,\tiny{$r=0$}\,\,\,\,\,\,\,\,\,\,\,\,}&
\multicolumn{1}{c}{\,\,\,\,\,\,\,\,\,\tiny{$r=1$}\,\,\,\,\,\,\,\,\,}
\end{tabular}
\,\,\,\,
\begin{tabular}{ l }
\tiny{$s=2n$}\\
\tiny{$s\geq n$}\\
\tiny{$s=n-1$}\\
\tiny{$s<n-1$}\\
\tiny{$s=0$}\\
\end{tabular}
$

$
\arraycolsep=18pt\def\arraystretch{1.6}
\begin{tabular}{| c | c | c |}
\multicolumn{1}{l}{\tiny{$\ov{m}<\ov{p}\leq \ov{t}$}}\\
\hline
0&$H^{2n}(X)$&$0$\\ \hline
0&$\Ker(j^s)$&$0$\\ \hline
$\Ker(\gamma^{n+1})$&$\Coker(\gamma^{n+1})$&$0$\\ \hline
0&$\Ker(j^{n})$&$0$\\ \hline
0&$\Coker(\gamma^{n-1})$&$0$\\ \hline
0&$\Ker(j^{s})$&$0$\\ \hline
0&$H^0(\wt X)$&$0$ \\ \hline \hline
\multicolumn{1}{c}{\,\,\,\,\,\,\,\,\,\tiny{$r=-1$}\,\,\,\,\,\,\,\,\,}&
\multicolumn{1}{c}{\,\,\,\,\,\,\,\,\,\,\,\,\tiny{$r=0$}\,\,\,\,\,\,\,\,\,\,\,\,}&
\multicolumn{1}{c}{\,\,\,\,\,\,\,\,\,\,\,\,\,\tiny{$r=1$}\,\,\,\,\,\,\,\,\,\,\,\,\,}
\end{tabular}
\,\,\,;\,\,\,
\begin{tabular}{| c | c | c |}
\multicolumn{1}{l}{\tiny{$\ov{p}=\ov{\infty}$}}\\
\hline
$\Ker(\gamma^{2n})$&$0$&$0$\\ \hline
0&$\Ker(j^s)$&$0$\\ \hline
$\Ker(\gamma^{n+1})$&$\Coker(\gamma^{n+1})$&$0$\\ \hline
0&$\Ker(j^{n})$&$0$\\ \hline
0&$\Coker(\gamma^{n-1})$&$0$\\ \hline
0&$\Ker(j^{s})$&$0$\\ \hline
0&$H^0(\wt X)$ &$0$\\ \hline \hline
\multicolumn{1}{c}{\,\,\,\,\,\,\,\,\,\tiny{$r=-1$}\,\,\,\,\,\,\,\,\,}&
\multicolumn{1}{c}{\,\,\,\,\,\,\,\,\,\,\,\,\tiny{$r=0$}\,\,\,\,\,\,\,\,\,\,\,\,}&
\multicolumn{1}{c}{\,\,\,\,\,\,\,\,\,\tiny{$r=1$}\,\,\,\,\,\,\,\,\,}
\end{tabular}\,\,\,\,
\begin{tabular}{ l }
\tiny{$s=2n$}\\
\tiny{$s>n+1$}\\
\tiny{$s=n+1$}\\
\tiny{$s=n$}\\
\tiny{$s=n-1$}\\
\tiny{$s<n-1$}\\
\tiny{$s=0$}\\
\end{tabular}
$

\subsection{Intersection-formality}
We now prove the main result of this section.
\begin{teo}\label{intersformal}
Let $X$ be a complex projective variety of dimension $n$ with only isolated singularities. 
Denote by $\Sigma$ the singular locus of $X$.
Assume that there is a resolution of singularities $f:\wt X\lra X$ of $X$ such that $D=f^{-1}(\Sigma)$ is smooth, 
and that the link $L_\sigma$ of $\sigma$ in $X$,
for all $\sigma\in\Sigma$ is $(n-2)$-connected. Then $X$ is GM-intersection-formal over $\CC$. 
Furthermore, if $\Sigma=\{\sigma\}$ is given by a single point then $X$ is intersection-formal over $\CC$.
\end{teo}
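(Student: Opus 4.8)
The strategy is to apply Theorem~$\ref{IE1formality}$, which tells us that the complex intersection homotopy type of $X$ is determined by its perverse weight spectral sequence $IE_{1,\ov\bullet}(X)\otimes\CC$, and then to show that under the stated hypotheses this perverse differential bigraded algebra is GM-intersection-formal, i.e.\ that there is a string of quasi-isomorphisms of GM-perverse cdga's from $IE_{1,\ov\bullet}(X)\otimes\CC$ to its cohomology $IE_{2,\ov\bullet}(X)\otimes\CC\cong \mathrm{U}(IH^*_{\ov\bullet}(X;\CC))$. Because a string of quasi-isomorphisms $I\Aa_{\ov\bullet}(X)\otimes\CC\simeq IE_{1,\ov\bullet}(X)\otimes\CC$ already exists, it suffices to produce quasi-isomorphisms at the level of $IE_{1,\ov\bullet}(X)$.

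First I would exploit the connectivity hypothesis $H^i(L_\sigma;\QQ)=0$ for $0<i\leq n-2$, which puts us exactly in the situation analysed in Section~$\ref{perverseordinary}$: Lemmas~$\ref{semipure0}$, $\ref{ordinary0}$ and $\ref{ordinary1}$ give that $j_{\#}^s$ is an isomorphism for $s\neq 0,n-1,n+1,2n$, that $\gamma^s$ is injective for $s\neq n+1,2n$, and that $j^s$ is surjective for $s\neq 0,n-1$, together with the splittings $H^s(\wt X)\cong \Ker(j^s)\oplus\Img(\gamma^s)$. The plan is to use these splittings to construct an explicit multiplicative quasi-isomorphism. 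Concretely, the contractions of the acyclic pieces $H^{s-2}(D)\otimes\Lambda(t)\otimes t \to H^{s-2}(D)\otimes\Lambda(t)\otimes dt$ (and of the complex $(H^*(D)\otimes\Lambda(t,dt),d)$ appearing in the $r=0,1$ columns) realise $IE_{1,\ov\bullet}(X)$, in each perversity $\ov p\in\Pp$, as quasi-isomorphic to the finite-dimensional algebra displayed in the tables for $IE^{r,s}_{2,\ov p}(X)$. One then needs to check that these contractions can be chosen compatibly with the poset maps $\ov q\leq \ov p$ and with the four product maps $\jb$ listed in Section~$\ref{perverseordinary}$; the splittings $\gamma^s\circ(j_{\#}^s)^{-1}$ from the proof of Lemma~$\ref{ordinary1}$ provide the required multiplicative section away from the critical degrees $s=n-1,n+1$, and near those degrees one uses $\Ker(j^{n-1})\cap\Img(\gamma^{n-1})=\{0\}$ and $\Ker(j^{n+1})/(\Ker(j^{n+1})\cap\Img(\gamma^{n+1}))\cong\Coker(\gamma^{n+1})$ to split there too. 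Excluding the $\ov\infty$-perversity is what makes this possible: in the $r=-1$ column for $\ov p=\ov\infty$ there is an extra class $\Ker(\gamma^{2n})$ in degree $2n$ whose products would obstruct formality of $A_{\ov\infty}$, but GM-perverse cdga's ignore this column.

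The main obstacle I anticipate is verifying the multiplicative compatibility of the homotopy retraction across all perversities \emph{simultaneously}: a perverse cdga quasi-isomorphism is not just a degreewise one but must respect the whole tower $IE_{1,\ov 0}(X)\to\cdots\to IE_{1,\ov t}(X)$ and all the convolution products $IE_{1,\ov p}\otimes IE_{1,\ov q}\to IE_{1,\ov p+\ov q}$. The cleanest way around this is probably to work with the $\ov\infty$-level model $E_1(X_{reg})\otimes\CC\simeq H^*(X_{reg};\CC)$ (which \emph{is} formal, by purity in the relevant degrees combined with the classical argument, or by direct inspection of the small $E_1$ above) and the link-level model $E_1(L)\otimes\CC$, build a \emph{single} commutative square of quasi-isomorphisms of GM-perverse diagrams relating the pull-back data $(E_1(X_{reg})\to E_1(L))$ to its formal counterpart $(H^*(X_{reg};\CC)\to H^*(L;\CC))$, and then observe that $\Ii_{\ov\bullet}(-)$ (Definition~$\ref{Pullbackpervers}$) is functorial and sends this square to a GM-intersection-formality string for $IE_{1,\ov\bullet}(X)$. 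Finally, for the last sentence: when $\Sigma=\{\sigma\}$ is a single point, the regular part $X_{reg}$ and the link $L$ are connected, $H^0(L)\cong H^0(X_{reg})\cong\CC$, and the obstruction class $\Ker(\gamma^{2n})$ in the $\ov\infty$-column is absorbed by Poincar\'e duality (Lemma~$\ref{PD}$, $\Ker(\gamma^{2n})\cong\Coker(j^0)^\du=0$ since $j^0$ is an isomorphism for a single point). Hence the $\ov\infty$-perversity presents no extra difficulty and the same string of quasi-isomorphisms upgrades to an isomorphism in $\Ho(\pdga{\CC})$, giving intersection-formality over $\CC$.
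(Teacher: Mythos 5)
Your first paragraph and your reading of Lemmas $\ref{semipure0}$--$\ref{ordinary1}$ match the paper's setup exactly, but the place where the actual work happens --- producing a \emph{multiplicative} quasi-isomorphism between $IE_{1,\ov\bullet}(X)$ and its cohomology --- is not carried out, and the shortcut you propose instead does not work. You suggest building a commutative square of quasi-isomorphisms relating the morphism $E_1(X_{reg})\to E_1(L)$ to $H^*(X_{reg};\CC)\to H^*(L;\CC)$ and then applying $\Ii_{\ov\bullet}(-)$. This is a strictly stronger statement than GM-intersection-formality: since $E_1(X_{reg})$ sits as the corner $A$ of the pull-back defining $\Ii_{\ov p}$ for \emph{every} perversity (not only $\ov\infty$), such a square would in particular force $E_1(X_{reg})\simeq H^*(X_{reg};\CC)$ as cdga's. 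Your justification for this --- ``purity in the relevant degrees'' --- fails: by the $E_2(X_{reg})$ table, $Gr^W H^{n}(X_{reg})$ and $Gr^WH^{2n-1}(X_{reg})$ each carry two weights (the classes $\Ker(\gamma^{n+1})$ and $\Ker(\gamma^{2n})$ live in weight one above the degree), so the ``purity implies formality'' corollary does not apply, and formality of the \emph{morphism} $\iota^*$ is harder still and is nowhere established. Moreover, even granting the square, $\Ii_{\ov\bullet}\bigl(H^*(X_{reg})\to H^*(L)\bigr)$ is not its own cohomology (the truncations $\xi_{\leq\ov p}H^*(L)(t,dt)$ still carry the $d/dt$ differential), so the final multiplicative contraction onto $IH^*_{\ov\bullet}(X;\CC)$ --- which is where all the difficulty lies --- would remain to be done.

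What the paper does instead, and what your proposal is missing, is to interpose an explicit perverse sub-cdga $M_{\ov\bullet}\hookrightarrow IE_{1,\ov\bullet}(X)$ (built perversity by perversity from $\Ker(j^s)$, $\Ker(j_{\#}^{n+1})$, and the Poincar\'{e}-dual spaces $\Ker(\gamma^{n+1})^\du\cong\Coker(j^{n-1})$ and $\Ker(j^{n+1})^\du\cong\Coker(\gamma^{n-1})$ of Lemma $\ref{PD}$, decorated with $(t-1)$ and $dt$ factors) and a roof $IE_{1,\ov\bullet}(X)\leftarrow M_{\ov\bullet}\to IE_{2,\ov\bullet}(X)$. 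The left arrow is an inclusion, hence automatically multiplicative once one checks $M_{\ov p}\cdot M_{\ov q}\subseteq M_{\ov p+\ov q}$; the right arrow $\psi_{\ov\bullet}$ then only needs to be checked multiplicative on a handful of non-trivial products (e.g.\ $\Ker(j_{\#}^{n+1})\times\Ker(\gamma^{n+1})^\du$), which uses carefully chosen complements $C$ with $C\cap\Ker(\gamma^{n+1})=\{0\}$. It is precisely this verification that fails at $\ov\infty$ when $\Ker(\gamma^{2n})\neq 0$, i.e.\ when there is more than one singular point --- your observation that $\Ker(\gamma^{2n})\cong\Coker(j^0)^\du=0$ for a single singular point is the correct reason the full intersection-formality statement holds in that case, but it enters as the vanishing of the obstruction to multiplicativity of $\psi_{\ov\infty}$, not as an a posteriori ``upgrade'' of an already-constructed string.
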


\begin{proof}
By Theorem $\ref{IE1formality}$ there is a string of quasi-isomorphisms of 
perverse cdga's from $I\Aa_{\ov\bullet}(X)\otimes\CC$ to
$IE_{1,\ov\bullet}(X)\otimes\CC$. Furthermore, we have $IE_{2,\ov \bullet}(X)\cong IH_{\ov\bullet}(X;\QQ)$.
We next define a perverse cdga $M_{\ov\bullet}$ together with quasi-isomorphisms
$IE_{1,\ov{\bullet}}(X){\longleftarrow} M_{\ov{\bullet}}{\lra} IE_{2,\ov{\bullet}}(X).$

For projective curves, the Theorem is trivially satisfied. Hence we may assume that $n>1$.
We will define $M_{\ov\bullet}$ step by step, for the perversities $\ov 0$, $\ov m$, $\ov n$, $\ov t$ and $\ov \infty$.
We begin with the $\ov 0$-perversity. Let
$M_{\ov{0}}$ be the bigraded complex with trivial differential given by
$$
M_{\ov{0}}^{r,s}=
\arraycolsep=18pt\def\arraystretch{1.6}
\begin{tabular}{| c | c | c | }
\hline
\,\,\,\,\,\,0\,\,\,\,\,\,&$H^{2n}(\wt X)$&0\\ \hline
0&$\Ker(j^s)$&0\\  \hline
0&$\Ker(j^{n-1})$&$\Ker(\gamma^{n+1})^\du\otimes  dt$\\ \hline
0&$\Ker(j^{s})$&0\\ \hline
0&$H^0(\wt X)$&0 \\ \hline \hline
\multicolumn{1}{c}{\tiny{$r=-1$}}&\multicolumn{1}{c}{\tiny{$r=0$}}&\multicolumn{1}{c}{\tiny{$r=1$}}
\end{tabular}
\,\,\,\,
\begin{tabular}{ l }
\tiny{$s=2n$}\\
\tiny{$s\geq n$}\\
\tiny{$s=n-1$}\\
\tiny{$s<n-1$}\\
\tiny{$s=0$}\\\\
\end{tabular}
$$
By Lemma $\ref{PD}$ we have 
$\Ker(\gamma^{n+1})^\du\cong \Coker(j^{n-1})$. The assignation $\Ker(\gamma^{n+1})^\du\otimes  dt\mapsto \Coker(j^{n-1})$
defines an isomorphism of bigraded complexes
$\psi_{\ov 0}:M_{\ov 0}\to IE_{2,\ov 0}(X)$.
We next define an inclusion of bigraded complexes $\varphi_{\ov 0}:M_{\ov 0}\to IE_{1,\ov 0}(X)$.
Recall that
\begin{equation*}\resizebox{1\hsize}{!}{$
IE^{r,s}_{1,\ov{0}}(X)=
\arraycolsep=18pt\def\arraystretch{1.6}
\begin{tabular}{| c | c | c |}
\hline
$H^{s-2}(D)\Lambda(t)\otimes t$&$\left(H^s(\wt X)\oplus_{H^s(D)}H^s(D)\Lambda(t)\otimes t\right)\oplus H^{s-2}(D)\otimes \Lambda(t)\otimes dt$&$H^{s}(D)\otimes \Lambda(t)\otimes dt$\\ \hline
$0$&$H^0(\wt X)\oplus_{H^0(D)}H^0(D)\otimes \Lambda(t)$&$H^{0}(D)\Lambda(t)\otimes dt$\\ \hline \hline
\multicolumn{1}{c}{\tiny{$r=-1$}}&\multicolumn{1}{c}{\tiny{$r=0$}}&\multicolumn{1}{c}{\tiny{$r=1$}}
\end{tabular}
\begin{tabular}{ l }
\tiny{$s\geq 1$}\\
\tiny{$s=0$}\\
\multicolumn{1}{c}{\tiny{}}
\end{tabular}
$}\end{equation*}

For $s\geq 0$ define $M^{0,s}_{\ov{0}}\lra IE_{1,\ov 0}^{0,s}(X)$
by $x\mapsto (x,j^{s}(x))$.
Since $\Ker(\gamma^{n+1})^\du\subset H^{n+1}(D)^\du\cong H^{n-1}(D)$, we have an injective map
$\Ker(\gamma^{n+1})^\du dt \lra IE_{1,\ov{0}}^{1,n-1}(X)=H^{n-1}(D)\otimes\Lambda(t)\otimes dt$.
Then the diagram
$$
\xymatrix@R=5pt{
M^{*,n-1}_{\ov{0}}=&\ar[ddd]\Ker(j^{n-1})\ar[r]^{0}&\Ker(\gamma^{n+1})^\du\otimes dt\ar[ddd]& \\
\\
\\
IE_{1,\ov{0}}^{*,n-1}(X)=&IE_{1,\ov{0}}^{0,n-1}(X)\ar[r]^-{d}&IE_{1,\ov{0}}^{1,n-1}(X)&
}
$$
commutes. Hence we have quasi-isomorphisms of complexes
$IE_{1,\ov{0}}(X)\stackrel{\sim}{\longleftarrow} M_{\ov{0}}\stackrel{\sim}{\lra} IE_{2,\ov{0}}(X).$

The $\ov m$-perversity weight spectral sequence for $X$ is given by:
\begin{equation*}\resizebox{1\hsize}{!}{$
IE^{r,s}_{1,\ov{m}}(X)=
\arraycolsep=18pt\def\arraystretch{1.6}
\begin{tabular}{| c | c | c |}
\hline
$H^{s-2}(D)\otimes \Lambda(t)\otimes t$&$\left(H^s(\wt X)\oplus_{H^s(D)}H^s(D)\otimes \Lambda(t)\otimes t\right)\oplus H^{s-2}(D)\Lambda(t)\otimes dt$&$H^{s}(D)\otimes \Lambda(t)\otimes dt$\\ \hline
$H^{s-2}(D)\otimes \Lambda(t)$&$\left(H^s(\wt X)\oplus_{H^s(D)}H^s(D)\otimes \Lambda(t)\right)\oplus H^{s-2}(D)\otimes \Lambda(t)\otimes dt$&$H^{s}(D)\otimes \Lambda(t)\otimes dt$\\ \hline \hline
\multicolumn{1}{c}{\tiny{$r=-1$}}&\multicolumn{1}{c}{\tiny{$r=0$}}&\multicolumn{1}{c}{\tiny{$r=1$}}
\end{tabular}
\begin{tabular}{ l }
\tiny{$s\geq n$}\\
\tiny{$s<n$}\\
\multicolumn{1}{c}{\tiny{}}
\end{tabular}
$}\end{equation*}

Define $M_{\ov{m}}$ as the bigraded sub-complex of $IE_{1,\ov{m}}(X)$ given by
$$
M^{r,s}_{\ov{m}}=
\arraycolsep=18pt\def\arraystretch{1.6}
\begin{tabular}{| c | c | c |}
\hline
\,\,\,\,\,0\,\,\,\,\,&$H^{2n}(\wt X)$&0\\ \hline
0&$\Ker(j^s)$&0\\  \hline
0&$\Ker(j^{n+1})^\du\oplus \Ker(\gamma^{n+1})^\du \otimes(t-1)$&$\Ker(\gamma^{n+1})^\du\otimes dt$\\ \hline
0&$\Ker(j^{s})$&0\\\hline
0&$H^0(\wt X)$&0 \\ \hline \hline
\multicolumn{1}{c}{\tiny{$r=-1$}}&\multicolumn{1}{c}{\tiny{$r=0$}}&\multicolumn{1}{c}{\tiny{$r=1$}}
\end{tabular}
\,\,\,\,\,
\begin{tabular}{ l }
\tiny{$s=2n$}\\
\tiny{$s\geq n$}\\
\tiny{$s=n-1$}\\
\tiny{$s<n-1$}\\
\tiny{$s=0$}\\\\
\end{tabular}
$$
The only non-trivial differential of $M_{\ov m}$ is given by differentiation with respect to $t$.
Note that by Lemma $\ref{PD}$ we have 
$\Coker(\gamma^{n-1})\cong \Ker(j^{n+1})^\du\subset H^{n+1}(\wt X)\cong H^{n-1}(\wt X).$
The assignations
$\Ker(j^{n+1})^\du\mapsto \Coker(\gamma^{n-1})$
and $\Ker(\gamma^{n+1})^\du\mapsto 0$ give a commutative
diagram
$$
\xymatrix@R=5pt{
M^{*,n-1}_{\ov{m}}=&\ar[ddd]_{}\Ker(j^{n-1})^\du \oplus \Ker(\gamma^{n+1})^\du \otimes(t-1)\ar[r]^-{d}&\Ker(\gamma^{n+1})^\du\otimes dt\ar[ddd]& \\
\\
\\
IE_{2,\ov{m}}^{*,n-1}(X)=&\Coker(\gamma^{n-1})\ar[r]&0&
}.
$$
Hence we have quasi-isomorphisms of complexes
$IE_{1,\ov{m}}(X)\stackrel{\sim}{\longleftarrow} M_{\ov{m}}\stackrel{\sim}{\lra} IE_{2,\ov{m}}(X).$

By Lemma $\ref{ordinary1}$ we have $\Ker(j^{n-1})\cap \Img(\gamma^{n-1})=\{0\}$. Hence
$\Ker(j^{n-1})\subset \Ker(j^{n+1})^\du$ and we have an
injective morphism of complexes $M_{\ov 0}\lra M_{\ov m}$ making the following diagram commute
$$
\xymatrix{
\ar[d]IE_{1,\ov 0}(X) & \ar[d]\ar[l]_-{\sim}M_{\ov 0}\ar[r]^-{\sim} & \ar[d]IE_{2,\ov 0}(X)\\
IE_{1,\ov m}(X) & \ar[l]_-{\sim}M_{\ov m}\ar[r]^-{\sim} & IE_{2,\ov m}(X)
}.
$$

We next study the case $\ov m+\ov 1=\ov n$.
The $\ov n$-perversity weight spectral sequence for $X$ is given by:
\begin{equation*}\resizebox{1\hsize}{!}{$
IE^{r,s}_{1,\ov{n}}(X)=
\arraycolsep=18pt\def\arraystretch{1.6}
\begin{tabular}{| c | c | c |}
\hline
$H^{s-2}(D)\otimes \Lambda(t)\otimes t$&$\left(H^s(\wt X)\oplus_{H^s(D)}H^s(D)\otimes \Lambda(t)\otimes t\right)\oplus H^{s-2}(D)\otimes \Lambda(t)\otimes dt$&$H^{s}(D)\otimes \Lambda(t)\otimes dt$\\ \hline
$\Ker(j_{\#}^{n+1})\oplus H^{s-2}(D)\otimes \Lambda(t)\otimes t$&$\left(H^s(\wt X)\oplus_{H^s(D)}H^s(D)\otimes \Lambda(t)\otimes t\right)\oplus H^{s-2}(D)\otimes \Lambda(t)\otimes dt$&$H^{s}(D)\otimes \Lambda(t)\otimes dt$\\ \hline
$H^{s-2}(D)\otimes \Lambda(t)$&$\left(H^s(\wt X)\oplus_{H^s(D)}H^s(D)\otimes \Lambda(t)\right)\oplus H^{s-2}(D)\otimes \Lambda(t)\otimes dt$&$H^{s}(D)\otimes \Lambda(t)\otimes dt$\\ \hline \hline
\multicolumn{1}{c}{\tiny{$r=-1$}}&\multicolumn{1}{c}{\tiny{$r=0$}}&\multicolumn{1}{c}{\tiny{$r=1$}}
\end{tabular}
\begin{tabular}{ l }
\tiny{$s> n+1$}\\
\tiny{$s=n+1$}\\
\tiny{$s\leq n$}\\
\multicolumn{1}{c}{\tiny{}}
\end{tabular}
$}\end{equation*}

Define
$M_{\ov{n}}$ as the bigraded sub-complex of $IE_{1,\ov{n}}(X)$ given by
$$
M^{r,s}_{\ov{n}}=
\arraycolsep=18pt\def\arraystretch{1.6}
\begin{tabular}{| c | c | c |}
\hline
$H^{2n-2}(D)\otimes t$&$H^{2n}(\wt X)\oplus H^{2n-2}(D)\otimes dt$&0\\ \hline
0&$\Ker(j^s)$&0\\  \hline
$\Ker(j_{\#}^{n+1})$&$\Ker(j^{n+1})$&0\\ \hline
0&$\Ker(j^{n})$&0\\ \hline
0&$\Ker(j^{n+1})^\du\oplus \Ker(\gamma^{n+1})^\du \otimes (t-1)$&$\Ker(\gamma^{n+1})^\du \otimes dt$\\ \hline
0&$\Ker(j^{s})$&0\\ \hline
0&$H^0(\wt X)$&0 \\ \hline \hline
\multicolumn{1}{c}{\tiny{$r=-1$}}&\multicolumn{1}{c}{\tiny{$r=0$}}&\multicolumn{1}{c}{\tiny{$r=1$}}
\end{tabular}
\,\,\,\,\,
\begin{tabular}{ l }
\tiny{$s=2n$}\\
\tiny{$s>n+1$}\\
\tiny{$s=n+1$}\\
\tiny{$s=n$}\\
\tiny{$s=n-1$}\\
\tiny{$s<n-1$}\\
\tiny{$s=0$}\\\\
\end{tabular}
$$

The non-trivial differentials of $M_{\ov n}$ are given by the map
$\Ker(\gamma^{n+1})^\du\otimes  (t-1)\lra  \Ker(\gamma^{n+1})^\du\otimes  dt$ in degree $s=n-1$ defined by
differentiation with respect to $t$,
the map $\gamma^{n+1}:\Ker(j_{\#}^{n+1})\to \Ker(j^{n+1})$ in degree $s=n+1$ and the map
$H^{2n-2}(D)\otimes t\lra H^{2n}(\wt X)\oplus H^{2n-2}(D)\otimes dt$ in degree $s=2n$
defined by $a\cdot t\mapsto (\gamma^{2n}(a),a\cdot dt)$.
To define a morphism of complexes $M_{\ov{n}}\lra IE_{2,\ov{n}}(X)$ it suffices to
define the maps in degrees $s=n+1$ and $s=2n$.

By Lemma $\ref{ordinary1}$ there is a projection map
$$\Ker(j^{n+1}) \twoheadrightarrow \Ker(j^{n+1})/(\Ker(j^{n+1})\cap \Img(\gamma^{n+1}))\cong \Coker(\gamma^{n+1}).$$
Since $\Ker(\gamma^{n+1})\subset \Ker(j_{\#}^{n+1})$ we may find a direct sum decomposition
$\Ker(j_{\#}^{n+1})=\Ker(\gamma^{n+1})\oplus C$. Define a map $\Ker(j_{\#}^{n+1})\lra \Ker(\gamma^{n+1})$ by projection to the first component.
Since $\gamma^{n+1}(\Ker(j_{\#}^{n+1})\subset \Img(\gamma^{n+1})\cap \Ker(j^{n+1})$ we
have a commutative diagram
$$
\xymatrix@R=5pt{
M^{*,n+1}_{\ov{n}}=&\ar[ddd]\Ker(j_{\#}^{n+1})\ar[r]^-{\gamma^{n+1}}&\Ker(j^{n+1})\ar@{->>}[ddd]& \\
\\
\\
IE_{2,\ov{n}}^{*,n+1}(X)=&\Ker(\gamma^{n+1})\ar[r]^{0}&\Coker(\gamma^{n+1})&
}.
$$
In degree $2n$ we have a commutative diagram
$$
\xymatrix@R=5pt{
M^{*,2n}_{\ov{n}}=&\ar@{->>}[ddd]_{}H^{2n-2}(D)\otimes t\ar[r]^-{d}&H^{2n}(\wt X)\oplus H^{2n-2}(D)\otimes dt\ar@{->>}[ddd]^\pi& \\
\\
\\
IE_{2,\ov{n}}^{*,2n}(X)=&0\ar[r]^{0}&H^{2n}(\wt X)&
}
$$
where 
$d(a\cdot t)=(\gamma^{2n}(a), a\cdot dt)$ and 
$\pi(x,a\cdot dt)=\gamma^{2n}(a)-x$.
This gives quasi-isomorphisms of complexes 
$IE_{1,\ov{n}}(X)\stackrel{\sim}{\longleftarrow} M_{\ov{n}}\stackrel{\sim}{\lra} IE_{2,\ov{n}}(X)$
compatible with the inclusion $M_{\ov m}\to M_{\ov n}$.

The $\ov t$-perversity weight spectral sequence for $X$ is given by:
\begin{equation*}\resizebox{1\hsize}{!}{$
IE^{r,s}_{1,\ov{t}}(X)=
\arraycolsep=18pt\def\arraystretch{1.6}
\begin{tabular}{| c | c | c |}
\hline
$H^{2n-2}(D)\otimes \Lambda(t)\otimes t$&$H^{2n}(X)\oplus H^{2n-2}(D)\otimes \Lambda(t)\otimes dt$&$0$\\ \hline
$H^{s-2}(D)\otimes \Lambda(t)$&$\left(H^s(\wt X)\oplus_{H^s(D)}H^s(D)\Lambda(t)\right)\oplus H^{s-2}(D)\otimes \Lambda(t)\otimes dt$&$H^{s}(D)\otimes \Lambda(t)\otimes dt$\\ \hline \hline
\multicolumn{1}{c}{\tiny{$r=-1$}}&\multicolumn{1}{c}{\tiny{$r=0$}}&\multicolumn{1}{c}{\tiny{$r=1$}}
\end{tabular}
\begin{tabular}{ l }
\tiny{$s=2n$}\\
\tiny{$s<2n$}\\
\multicolumn{1}{c}{\tiny{}}
\end{tabular}
$}\end{equation*}

Define
$M_{\ov{t}}$ as the bigraded sub-complex of $IE_{1,\ov{t}}(X)$ given by
$$
M^{r,s}_{\ov{t}}=
\arraycolsep=18pt\def\arraystretch{1.6}
\begin{tabular}{| c | c | c |}
\hline
$H^{2n-2}(D)\otimes t$&$H^{2n}(\wt X)\oplus H^{2n-2}(D)\otimes dt$&0\\ \hline
0&$\Ker(j^s)$&0\\  \hline
$H^{n-1}(D)$&$H^{n+1}(\wt X)$&0\\ \hline
0&$\Ker(j^{n})$&0\\ \hline
0&$\Ker(j^{n+1})^\du\oplus \Ker(\gamma^{n+1})^\du\otimes  (t-1)$&$\Ker(\gamma^{n+1})^\du \otimes dt$\\ \hline
0&$\Ker(j^{s})$&0\\ \hline
0&$H^0(\wt X)$&0 \\ \hline \hline
\multicolumn{1}{c}{\tiny{$r=-1$}}&\multicolumn{1}{c}{\tiny{$r=0$}}&\multicolumn{1}{c}{\tiny{$r=1$}}
\end{tabular}
\,\,\,\,\,
\begin{tabular}{ l }
\tiny{$s=2n$}\\
\tiny{$s>n+1$}\\
\tiny{$s=n+1$}\\
\tiny{$s=n$}\\
\tiny{$s=n-1$}\\
\tiny{$s<n-1$}\\
\tiny{$s=0$}\\\\
\end{tabular}
$$

Note that $M_{\ov n}$ to $M_{\ov t}$ we only replaced the complex $\gamma^{n+1}:\Ker(j_{\#}^{n+1})\to \Ker(j^{n+1})$
by the quasi-isomorphic complex $\gamma^{n+1}:H^{n-1}(D)\to H^{n+1}(\wt X)$, in degree $s=n+1$.
Analogously to the previous case, choose a direct sum decomposition $H^{n-1}(D)\cong \Ker(\gamma^{n+1})\oplus C\oplus C'$ and define a map
$H^{n-1}(D)\to \Ker(\gamma^{n+1})$ by projecting to the first component.
We have quasi-isomorphisms of complexes
$IE_{1,\ov{t}}(X)\stackrel{\sim}{\longleftarrow} M_{\ov{t}}\stackrel{\sim}{\lra} IE_{2,\ov{t}}(X)$
compatible with the inclusion $M_{\ov n}\to M_{\ov t}$.

Lastly, the $\ov \infty$-perversity weight spectral sequence for $X$ is given by:
\begin{equation*}\resizebox{1\hsize}{!}{$
IE^{r,s}_{1,\ov{\infty}}(X)=
\arraycolsep=18pt\def\arraystretch{1.6}
\begin{tabular}{| c | c | c |}
\multicolumn{1}{c}{}\\
\hline
$H^{s-2}(D)\otimes \Lambda(t)$&$\left(H^s(\wt X)\oplus_{H^s(D)}H^s(D)\otimes \Lambda(t)\right)\oplus H^{s-2}(D)\otimes \Lambda(t)\otimes dt$&$H^{s}(D)\otimes \Lambda(t)\otimes dt$\\ \hline \hline
\multicolumn{1}{c}{\tiny{$r=-1$}}&\multicolumn{1}{c}{\tiny{$r=0$}}&\multicolumn{1}{c}{\tiny{$r=1$}}
\end{tabular}
$}\end{equation*}

Define $M_{\ov{\infty}}$ as the bigraded sub-complex of $IE_{1,\ov{\infty}}(X)$ given by
\begin{equation*}\resizebox{1\hsize}{!}{$
M^{r,s}_{\ov{\infty}}=
\arraycolsep=18pt\def\arraystretch{1.6}
\begin{tabular}{| c | c | c |}
\hline
$H^{2n-2}(D)\oplus H^{2n-2}(D)\otimes t$&$H^{2n}(\wt X)\oplus H^{2n-2}(D)\otimes dt$&0\\ \hline
0&$\Ker(j^s)$&0\\  \hline
$H^{n-1}(D)$&$H^{n+1}(\wt X)$&0\\ \hline
0&$\Ker(j^{n})$&0\\ \hline
0&$\Ker(j^{n+1})^\du\oplus \Ker(\gamma^{n+1})^\du \otimes (t-1)$&$\Ker(\gamma^{n+1})^\du\otimes  dt$\\ \hline
0&$\Ker(j^{s})$&0\\ \hline
0&$H^0(\wt X)$&0 \\ \hline \hline
\multicolumn{1}{c}{\tiny{$r=-1$}}&\multicolumn{1}{c}{\tiny{$r=0$}}&\multicolumn{1}{c}{\tiny{$r=1$}}
\end{tabular}
\,\,\,\,\,
\begin{tabular}{ l }
\tiny{$s=2n$}\\
\tiny{$s>n+1$}\\
\tiny{$s=n+1$}\\
\tiny{$s=n$}\\
\tiny{$s=n-1$}\\
\tiny{$s<n-1$}\\
\tiny{$s=0$}\\\\
\end{tabular}
$}\end{equation*}
Note that from $M_{\ov t}$ to $M_{\ov \infty}$ we only added $H^{2n-2}(D)$ in bidegree $(-1,2n)$.
The differential in degree $s=2n$ is given by $(a+b\cdot t)\mapsto (\gamma^{2n}(a)+\gamma^{2n}(b),b\cdot dt)$.
It is straightforward to define quasi-isomorphisms of complexes
$IE_{1,\ov{\infty}}(X)\stackrel{\sim}{\longleftarrow} M_{\ov \infty}\stackrel{\sim}{\lra} IE_{2,\ov{\infty}}(X)$
compatible with the inclusion $M_{\ov t}\lra M_{\ov \infty}$.

Let $M_{\ov{\bullet}}$ be the perverse complex given by
$$
M_{\ov{p}}=\left\{\begin{array}{ll}
M_{\ov{0}}&\text{ if }0\leq p<n-1\\
M_{\ov{m}}&\text{ if }p=n-1\\
M_{\ov{n}}&\text{ if }p=n\\
M_{\ov{t}}&\text{ if }n< p\leq 2n-2\\
M_{\ov{\infty}}&\text{ if }p=\infty\\
\end{array}\right..
$$
We have quasi-isomorphisms of perverse complexes
$$IE_{1,\ov{\bullet}}(X)\stackrel{\varphi_{\ov \bullet}}{\longleftarrow} M_{\ov{\bullet}}\stackrel{\psi_{\ov \bullet}}{\lra} IE_{2,\ov{\bullet}}(X),$$
where $\varphi_{\ov \bullet}$ is injective and $\psi_{\ov \bullet}$ is surjective.

Consider on $M_{\ov\bullet}$ the multiplicative structure induced by the inclusion $\varphi_{\ov\bullet}$.
We next show that $M_{\ov p}\times M_{\ov q}\subset M_{\ov p+ \ov q}$ for all $\ov p,\ov q\in\widehat \Pp$,
the map $\varphi_{\ov\bullet}$ is a quasi-isomorphism of perverse cdga's.

The multiplicative structure of $M_{\ov0}$ is given by the maps $M_{\ov 0}^{0,*}\times M_{\ov 0}^{0,*}\lra M_{\ov 0}^{0,*}$
induced by the cup product of $H^*(\wt X)$
and the map $M_{\ov 0}^{0,*}\times M_{\ov 0}^{1,*}\lra M_{\ov 0}^{1,*}$ given by
$(a,x)\mapsto a\cdot j^*(x)$. Since $M_{\ov 0}^{0,s}=\Ker(j^s)$ for all $s>0$, the only 
non-trivial product in $M_{\ov 0}^{1,*}$ is
$H^{0}(\wt X)\cdot \Ker(\gamma^{n+1})^\du\lra \Ker(\gamma^{n+1})^\du$.
Also, from the multiplicative structure of $IE_{2,\ov 0}(X)$ we have
$\Ker(\gamma^{n+1})^\du\cdot \Ker(\gamma^{n+1})^\du=0.$
This proves that $M_{\ov 0}\times M_{\ov 0}\subseteq M_{\ov 0}$ and that the map $\varphi_{\ov 0}:M_{\ov 0}\lra IE_{1,\ov 0}(X)$ is a morphism of cdga's.
To see that $M_{\ov 0}\times M_{\ov m}\subseteq M_{\ov m}$ it suffices to prove that
$\Ker(j^{n+1})^\du\cdot \Ker(\gamma^{n+1})^\du=0.$
This follows from the algebra structure of $IE_{2,\ov \bullet}(X)$ together with the
corresponding Poincar\'{e} duality isomorphisms.
We now show that $M_{\ov 0}\times M_{\ov n}\subseteq M_{\ov n}$.
Note that for all $s>0$ we have
$\Ker(j^s)\cdot \Ker(j_{\#}^{n+1})=0.$
The remaining inclusions are trivial.
Therefore $M_{\ov\bullet}$ is a perverse cdga and the inclusion $\varphi_{\ov \bullet}:M_{\ov \bullet}\lra IE_{1,\ov \bullet}(X)$ 
is a quasi-isomorphism of perverse cdga's.

Lastly, we show that for
every pair of perversitites $\ov p,\ov q\in\Pp$  such that $\ov{p}+\ov{q}<\ov{\infty}$, the diagram
$$
\xymatrix{
\ar[d]^{\psi_{\ov p}\otimes \psi_{\ov q}}M_{\ov p}\otimes M_{\ov p}\ar[r]&M_{\ov p+\ov q}\ar[d]^{\psi_{\ov p+\ov q}}\\
IE_{2,\ov p}(X)\otimes IE_{2,\ov q}(X)\ar[r]&IE_{2,\ov p+\ov q}(X)
}
$$
commutes.
The only non-trivial cases are when $\ov p=\ov 0$ and $\ov q=\ov n$ or $q=\ov t$.
We show that the diagram
$$
\xymatrix{
\Ker(j_{\#}^{n+1})\ar[d]\times H^0(\wt X)\ar[d]_{\psi_{\ov n}\times \psi_{\ov 0}}\ar[r]^-{\mu}&\Ker(j_{\#}^{n+1})\ar[d]_{\psi_{\ov n}}\\
\Ker(\gamma^{n+1})\times H^0(\wt X)\ar[r]^-{\mu}&\Ker(\gamma^{n+1})
}
$$
commutes, where $\mu(a,x)=a\cdot j^*(x)$. Recall that the morphism $\psi_{\ov n}:\Ker(j_{\#}^{n+1})\lra \Ker(\gamma^{n+1})$ is defined by
taking a direct sum decomposition $\Ker(j_{\#}^{n+1})=\Ker(\gamma^{n+1})\oplus C$ and choosing the projection to the first component.
Let $(a,x)\in \Ker(j_{\#}^{n+1})\times H^0(\wt X)$, and decompose $a=\overline{a}+c$ with $\overline{a}\in \Ker(\gamma^{n+1})$ and $c\in C$.
Then $\mu(a,x)=(\overline{a}+c)\cdot j^*(x)$. Since $\gamma(\overline{a}\cdot j^*(x))=\gamma(\overline{a})\cdot x=0$,
it suffices to show that $c\cdot j^*(x)\in C$. Since $x=1\in H^0(\wt X)$ and $\gamma(c)\neq 0$, it follows that $\gamma(c\cdot j^*(x))=\gamma(c)\cdot x\neq 0$.
Hence  $c\cdot j^*(x)\in C$, and the above diagram commutes.
We next show that the diagram
$$
\xymatrix{
\Ker(j_{\#}^{n+1})\ar[d]\times \Ker(\gamma^{n+1})^\du\ar[d]_{\psi_{\ov n}\times \psi_{\ov 0}}\ar[r]^-{\mu}&H^{2n-2}(D)dt\ar[d]_{\psi_{\ov n}}\\
\Ker(\gamma^{n+1})\times \Coker(j^{n-1})\ar[r]^-{\mu}&H^{2n}(\wt X)
}
$$
commutes.
Let $(a,b)\in \Ker(j_{\#}^{n+1})\times \Ker(\gamma^{n+1})^\du$. Then
$\psi_{\ov t}(\mu(a,b))=\gamma^{2n}(a\cdot b).$
On the other hand we have 
$\mu(\psi_{\ov t}(a),\psi_{\ov 0} b)=\gamma^{2n}(\overline{a}\cdot b),$
where $a=\overline{a}+c$ is a decomposition such that $\ov{a}\in \Ker(\gamma^{n+1})$ and $c\in C$.
Hence to prove that the above diagram
commutes, it suffices to see that $c\cdot b=0$. This follows from the fact that $C\cap \Ker(\gamma^{n+1})=\{0\}$ and $b\in \Ker(\gamma^{n+1})^\du$.
This proves that 
$\psi_{\ov{0}}\cdot \psi_{\ov{n}}=\psi_{\ov n}$.
The same arguments allow us to prove that $\psi_{\ov{0}}\cdot \psi_{\ov{t}}=\psi_{\ov t}$.
Therefore 
$\psi_{\ov \bullet}$ is multiplicative for finite perversities, and 
$X$ is GM-intersection-formal over $\CC$.

Assume now that $X$ has only one isolated singularity. Then $\Ker(\gamma^{2n})=0$ and the diagram
$$
\xymatrix{
\Ker(j_{\#}^{n+1})\ar[d]\times \Ker(j^{n+1})^\du\ar[d]_{\psi_{\ov n}\times \psi_{\ov m}}\ar[r]^-{\mu}&H^{2n-2}(D)dt\ar[d]_{\psi_{\ov \infty}}\\
\Ker(\gamma^{n+1})\times \Coker(j^{n-1})\ar[r]^-{\mu}&\Ker(\gamma^{2n})
}
$$
commutes. This proves that
$\psi_{\ov{p}}\cdot \psi_{\ov{q}}=\psi_{\ov p + \ov q}$ for all $p,q\in\widehat\Pp$. Hence in this case, $X$ is intersection-formal over $\CC$.
\end{proof}

\subsection{Applications}
A singular point $\sigma\in X$ is called \textit{ordinary} if there exists a neighborhood of $\sigma$
isomorphic to an affine cone $C_\sigma$ with vertex $\sigma$, over a
smooth hypersurface $S_\sigma$ of $\CC\PP^n$.
In such case, the link $L_\sigma$ of $\sigma$ in $X$
is a smooth real manifold of dimension $(2n-1)$ which is $(n-2)$-connected
Hence we have:

\begin{cor}
 Let $X$ be a complex projective variety with only ordinary isolated singularities.
 Then $X$ is GM-intersection-formal over $\CC$.
 Furthermore, if $X$ has only one singular point, then $X$ is intersection-formal over $\CC$.
\end{cor}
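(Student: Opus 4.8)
The plan is to deduce the statement directly from Theorem~\ref{intersformal}: the whole task is to verify that a complex projective variety $X$ of dimension $n$ with only ordinary isolated singularities satisfies the two hypotheses of that theorem, namely that $X$ admits a resolution of singularities $f\colon \wt X\to X$ whose exceptional divisor $D=f^{-1}(\Sigma)$ is smooth, and that the link $L_\sigma$ of each $\sigma\in\Sigma$ is $(n-2)$-connected. Projective curves being covered trivially as in the proof of Theorem~\ref{intersformal}, the substantial case is $n\geq 2$, and for the homotopy-theoretic part below I would assume $n\geq 3$, the case $n=2$ reducing to the connectedness of the link.

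For the resolution, since $\Sigma$ is finite I would take $f\colon \wt X := \mathrm{Bl}_\Sigma X\to X$, the blow-up of $X$ along its reduced singular locus. This is projective and birational, and an isomorphism over $X_{reg}$, so the behaviour of $\wt X$ need only be analysed near a point $\sigma\in\Sigma$. There $X$ is isomorphic to the affine cone $C_\sigma\subset\CC^{n+1}$ over the smooth hypersurface $S_\sigma\subset\CC\PP^n$, and blowing up the vertex identifies with the strict transform of $C_\sigma$ inside $\mathrm{Bl}_0\CC^{n+1}$, which is precisely the total space of the tautological line bundle $\mathcal{O}_{\CC\PP^n}(-1)$ restricted to $S_\sigma$. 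This total space is smooth (a line bundle over a smooth base), and the exceptional divisor over $\sigma$ is its zero section, isomorphic to $S_\sigma$. Hence $\wt X$ is smooth and $D=f^{-1}(\Sigma)\cong\bigsqcup_{\sigma\in\Sigma}S_\sigma$ is a smooth (in general disconnected) divisor, so the first hypothesis holds.

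For the connectivity of the link, I would use that $L_\sigma$ is the unit circle bundle of $\mathcal{O}_{\CC\PP^n}(-1)|_{S_\sigma}$, i.e.\ the restriction to $S_\sigma$ of the Hopf fibration $S^{2n+1}\to\CC\PP^n$; its Euler class is $\pm h$, where $h$ is the hyperplane class restricted to $S_\sigma$. In the long exact homotopy sequence of this $S^1$-bundle, the connecting map $\partial\colon\pi_2(S_\sigma)\to\pi_1(S^1)\cong\ZZ$ is given by evaluation of the Euler class; since $H_2(S_\sigma;\ZZ)$ surjects onto $H_2(\CC\PP^n;\ZZ)\cong\ZZ$ (Lefschetz hyperplane theorem) and $h$ pairs to $1$ with a line, $\partial$ is surjective, whence $\pi_1(L_\sigma)=0$. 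Moreover $\pi_k(L_\sigma)\cong\pi_k(S_\sigma)$ for $k\geq 3$, and by the Lefschetz hyperplane theorem $\pi_k(S_\sigma)\cong\pi_k(\CC\PP^n)$ for $k\leq n-2$, which vanishes for $3\leq k\leq n-2$; together with $\pi_2(L_\sigma)=\ker\partial=0$ when $n\geq 4$ (in which case $\pi_2(S_\sigma)\cong\pi_2(\CC\PP^n)\cong\ZZ$ and $\partial$ is an isomorphism), this gives $\pi_k(L_\sigma)=0$ for all $k\leq n-2$. Thus $L_\sigma$ is $(n-2)$-connected.

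Having verified both hypotheses, Theorem~\ref{intersformal} yields that $X$ is GM-intersection-formal over $\CC$, and intersection-formal over $\CC$ in the case $\Sigma=\{\sigma\}$. I do not expect a genuine obstacle here: the two ingredients are classical, and the only points requiring attention are the identification of the blow-up of the affine cone with the total space of the tautological bundle (guaranteeing smoothness of the exceptional divisor) and the bookkeeping in the homotopy exact sequence of the circle bundle.
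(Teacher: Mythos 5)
Your proposal is correct and follows the same route as the paper: the corollary is obtained by checking the two hypotheses of Theorem~\ref{intersformal}, the paper simply asserting that for a cone over a smooth hypersurface $S_\sigma\subset\CC\PP^n$ the link is a smooth $(2n-1)$-manifold which is $(n-2)$-connected (this is Milnor's connectivity theorem, as the cone is locally a hypersurface singularity in $\CC^{n+1}$), with the blow-up of the vertices as the implicit resolution with smooth exceptional divisor $\bigsqcup S_\sigma$. Your explicit identification of the blow-up with the total space of $\mathcal{O}_{\CC\PP^n}(-1)|_{S_\sigma}$ and the circle-bundle/Lefschetz computation of $\pi_k(L_\sigma)$ are a correct, self-contained substitute for that citation.
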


\begin{example}[Segre cubic]
Let $S$ denote the set of points $(x_0:x_1:x_2:x_3:x_4:x_5)$ of $\CC\PP^5$
satisfying
$x_0+x_1+x_2+x_3+x_4+x_5=0$ and $x_0^3+x_1^3+x_2^3+x_3^3+x_4^3+x_5^3=0$.
This is a normal projective threefold with 10 isolated ordinary singular points,
known as the \textit{Segre cubic}.
A resolution of $S$ is given by the moduli space $f:\ov{\Mm}_{0,6}\lra S$ of stable rational curves with 6 marked points,
and $D:=f^{-1}(\Sigma)=\bigsqcup_{i=1}^{10} \CC\PP^1\times\CC\PP^1$, where $\Sigma=\{\sigma_1,\cdots,\sigma_{10}\}$
denotes the singular locus of $S$.
For each $0\leq i\leq 10$ the link of $\sigma_i$ in $S$ is homeomorphic to a product
of spheres $L_i\simeq S^2\times S^3$. In particular, $L_i$ is simply connected.
Hence $S$ is GM-intersection-formal over $\CC$.
The intersection homotopy type of $S$ is determined
by the perverse graded algebra $IH_{\ov\bullet}^*(S;\QQ)$, which we next describe.
The rational cohomology of $\ov{\Mm}_{0,6}$ is well-known, with non-trivial Betti numbers:
$b_0(\ov{\Mm}_{0,6})=b_6(\ov{\Mm}_{0,6})=1$ and 
$b_2(\ov{\Mm}_{0,6})=b_4(\ov{\Mm}_{0,6})=16$.
Let $j^s:H^s(\ov{\Mm}_{0,6};\QQ)\to H^s(D;\QQ)$ denote the restriction map induced by the inclusion
$j:D\hookrightarrow \ov{\Mm}_{0,6}$, and $\gamma^s:H^{s-2}(D;\QQ)\to H^s(\ov{\Mm}_{0,6};\QQ)$
the corresponding Gysin map.
The rational cohomology of $S$ is:
$$
H^*(S;\QQ)\cong 
\def\arraystretch{1.4}
\begin{tabular}{| c | }
\hline
$\QQ$\\ \hline
0\\ \hline
$\Ker(j^4)\cong \QQ^6$\\ \hline
$\Coker(j^2)\cong \QQ^5$\\ \hline
$\Ker(j^2)\cong \QQ$\\ \hline
0\\  \hline
$\QQ$ \\ \hline
\end{tabular}
$$
Note that for $k\neq 3$, the weight filtration on $H^k(S;\QQ)$ is pure of weight $k$,
while for $k=3$ we have a non-trivial weight filtration, with $Gr^W_3H^3(S;\QQ)\cong \Ker(j^3)=0$ and
$Gr^W_2H^3(S;\QQ)\cong H^3(S;\QQ)\cong \QQ^5.$

Denote by $Van:=\Coker(j^2)\cong \QQ^5$ and let $Exc\cong \QQ^5$ be defined via the direct sum decomposition
$H^2(\ov{\Mm}_{0,6};\QQ)\cong \Ker(j^2)\oplus \Coker(\gamma^2)\oplus Exc$.
The rational intersection cohomology of $S$ is given by:
$$
IH^*_{\ov p}(S;\QQ)\cong 
\def\arraystretch{1.4}
\begin{tabular}{| c | }
\multicolumn{1}{c}{\tiny{$\ov{0}\leq \ov{p}\leq \ov{1}$}}\\
\hline
$\QQ$\\ \hline
0\\ \hline
$H^2(S;\QQ)^\du\oplus Exc^\du$\\ \hline
$Van$\\ \hline
$H^2(S;\QQ)$\\ \hline
0\\  \hline
$\QQ$ \\ \hline 
\end{tabular}
\,\,\,;\,\,\,
\begin{tabular}{| c |}
\multicolumn{1}{c}{\tiny{$\ov{p}=\ov 2$}}\\
\hline
$\QQ$\\ \hline
0\\ \hline
$H^2(S;\QQ)^\du \oplus Exc^\du$\\ \hline
0\\ \hline
$H^2(S;\QQ)\oplus Exc$\\ \hline
0\\  \hline
$\QQ$ \\ \hline 
\end{tabular}
\,\,\,;\,\,\,
\begin{tabular}{| c | }
\multicolumn{1}{c}{\tiny{$\ov{3}\leq \ov{p}\leq \ov{4}$}}\\
\hline
$\QQ$\\ \hline
0\\ \hline
$H^2(S;\QQ)^\du$\\ \hline
$Van^\du$\\ \hline
$H^2(S;\QQ)\oplus Exc$\\ \hline
0\\  \hline
$\QQ$ \\ \hline 
\end{tabular}
$$
Note that the weight filtration of $IH^*_{\ov \bullet}(S;\QQ)$ is non-trivial, since
$Gr^W_2IH^3_{\ov 0}(S;\QQ)\cong Van\neq 0$.

Since $S$ is simply connected, and $IH_{\ov 0}(S;\QQ)\cong H^*(S;\QQ)$,
one may compute the rational homotopy groups $\pi_*(S)\otimes\QQ$ from a minimal model of
$IH_{\ov 0}^*(S;\QQ)$, as done in Example 4.7 of \cite{ChCi1}.
Likewise, a perverse minimal model (in the sense of \cite{CST}) of the perverse cdga $IH_{\ov \bullet}^*(S;\QQ)$ would give
the ``rational intersection homotopy groups'' of $S$.
\end{example}

For a complete intersection $X$ with isolated singularities, the link of each singular point in $X$ is $(n-2)$-connected
(this result is due to Milnor \cite{Milnor} in the case of hypersurfaces
and to Hamm \cite{Hamm} for general complete intersections).
As another direct consequence of Theorem $\ref{intersformal}$ we have:

\begin{cor}
Let $X$ be a complete intersection with singular locus $\Sigma$ of dimension 0.
Assume that there exists a resolution of singularities $f:\wt X\to X$ such that
$D=f^{-1}(\Sigma)$ is smooth. Then $X$ is GM-intersection-formal over $\CC$.
\end{cor}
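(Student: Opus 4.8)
The plan is to reduce the statement directly to Theorem~\ref{intersformal} by checking that its two hypotheses hold for a complete intersection with isolated singularities. One hypothesis is already granted: by assumption there is a resolution $f\colon\wt X\to X$ with $D=f^{-1}(\Sigma)$ smooth. So the only thing to verify is the connectivity condition on the links of the singular points.

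First I would dispose of the trivial range: if $n=\dim X\leq 1$ the assertion holds for the same reason as in the curve case at the start of the proof of Theorem~\ref{intersformal}, so one may assume $n\geq 2$. Since $\dim\Sigma=0$, the singular locus is discrete and the link $L$ decomposes as $L=\sqcup_{\sigma\in\Sigma}L_\sigma$ with $L_\sigma=L(\sigma,X)$. For each $\sigma$, the point $\sigma$ is an isolated complete intersection singularity, so $L_\sigma$ is obtained by intersecting $X$ with a small sphere about $\sigma$; by the Milnor fibration theory for complete intersection singularities the local Milnor fibre has the homotopy type of a bouquet of $n$-spheres, and consequently $L_\sigma$ is a smooth closed real $(2n-1)$-manifold which is $(n-2)$-connected. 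This is Milnor's theorem~\cite{Milnor} in the hypersurface case and Hamm's theorem~\cite{Hamm} for general complete intersections, and it is exactly the input used, in the same form, in the treatment of ordinary isolated singularities above.

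With both hypotheses of Theorem~\ref{intersformal} verified --- a resolution with smooth exceptional divisor and links that are $(n-2)$-connected --- the theorem applies directly and gives that $X$ is GM-intersection-formal over $\CC$. The only non-formal ingredient is the connectivity of the link, so the main (and essentially the sole) point requiring care is that the singularities really are complete intersection singularities in the sense required by the Milnor--Hamm results, i.e. that the local ring at each $\sigma\in\Sigma$ is a complete intersection; granting this, the conclusion is immediate from Theorem~\ref{intersformal}.
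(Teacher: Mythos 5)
Your proposal is correct and follows exactly the paper's own (very short) argument: the corollary is stated as a direct consequence of Theorem~\ref{intersformal}, with the $(n-2)$-connectivity of the links supplied by Milnor's theorem for hypersurfaces and Hamm's theorem for general complete intersections, and the smooth exceptional divisor taken as a hypothesis. The extra detail you give about the Milnor fibre is not needed but does no harm.
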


\section{Isolated Surface Singularities}\label{Section_ISS}
In this last section we prove that isolated surface singularities are GM-intersection-formal over $\CC$.

\subsection{Notation}
Let $X$ be a complex projective surface with only isolated singularities and denote by $\Sigma$ the singular locus of $X$.
Let $f:\wt X\lra X$ be a resolution of singularities of $X$ such that
$D:=f^{-1}(\Sigma)=D_1\cup\cdots\cup D_N$ is a
simple normal crossings divisor.
Let $\wt D:=D^{(1)}=\sqcup_i D_i$ and $Z:=D^{(2)}=\sqcup_{i\neq j} D_i\cap D_j$. Then
$\wt D$ is a disjoint union of smooth projective curves and $Z$ is a finite collection of points.
Denote by $j:\wt D\lra \wt X$ the natural inclusion.
Let $i_1:Z\to \wt D$ be the inclusion defined by $D_i\cap D_j\mapsto D_i$, for every $i<j$,
and $i_2:Z\to \wt D$ be defined by $D_i\cap D_j\mapsto D_j$, for every $i<j$.
Then
$j\circ i_1=j\circ i_2$.

Denote by
$j^*:H^*(\wt X)\to H^*(\wt D)$ and $i_k^*:H^0(\wt D)\to H^s(Z)$ the induced restriction morphisms
and by 
$\gamma:H^{*}(\wt D)\to H^{*+2}(\wt X)$ 
and 
$\eta_k:H^{0}(Z)\to H^{2}(\wt D)$
the corresponding Gysin maps.
Let $i^*:=i_{1}^*-i_{2}^*$ and $\eta:=\eta_{1}-\eta_{2}$ denote the combinatorial restriction and Gysin maps respectively.
We have $i^*\circ j^*=0$ and $\gamma\circ \eta=0$.
Lastly, define $j_{\#}:=j^*\circ \gamma:H^0(\wt D)\lra H^2(\wt D)$.

\subsection{Perverse weight spectral sequence}\label{perversesurface}
The weight spectral sequence for $X_{reg}$ can be written as
$$
E_1^{r,s}(X_{reg})=
\arraycolsep=1pt
\def\arraystretch{1.6}
\begin{array}{|cccccc|}
\hline
H^0(Z)&&\xra{\,\,\,\eta\,\,\,}&H^2(\wt D)&\xra{\,\,\,\gamma^4\,\,\,}&H^4(\wt X)\\ \hline
&&&H^1(\wt D)&\xra{\,\,\,\gamma^3\,\,\,}&H^3(\wt X)\\ \hline
&&&H^0(\wt D)&\xra{\,\,\,\gamma^2\,\,\,}&H^2(\wt X)\\ \hline
&&&&&H^1(\wt X)\\ \hline
&&&&&H^0(\wt X)\\ \hline
\hline
\multicolumn{1}{c}{\text{\tiny{$r=-2$}}}&&&\multicolumn{1}{c}{\text{\tiny{$r=-1$}}}&&\multicolumn{1}{c}{\text{\tiny{$r=0$}}}
\end{array}
\def\arraystretch{1.6}
\begin{array}{c}
\text{ \tiny{$s=4$}}\\
\text{ \tiny{$s=3$}}\\
\text{ \tiny{$s=2$}}\\
\text{ \tiny{$s=1$}}\\
\text{\tiny{$s=0$}}\\\\
\end{array}
$$
Its algebra structure is
induced by the combinatorial restriction morphisms together with the cup products of $H^*(\wt X)$, $H^*(\wt D)$ and $H^0(Z)$.

The multiplicative weight spectral sequence for the link $L=L(\Sigma,X)$ of $\Sigma$ in $X$ can be written as:
\begin{equation*}
\resizebox{1\hsize}{!}{$
E_1^{r,s}(L)\cong
\arraycolsep=1pt
\def\arraystretch{1.6}
\begin{array}{|cccccccc|}
\hline
H^0(Z)\Lambda(l)&&\xra{\,\,\,d^{-2,4}\,\,\,}&H^2(\wt D)\oplus H^0(Z)\Lambda(l)dl&&&&\\ \hline
&&&H^1(\wt D)&&&&\\ \hline
&&&\Pp(i_1^*,i_2^*)\oplus H^0(Z)\Lambda(l)&\xra{\,\,\,d^{-1,2}\,\,\,}&H^2(\wt D)\oplus H^0(Z)\Lambda(l)dl\oplus H^0(Z)\Lambda(l)dl&&\\ \hline
&&&&&H^1(\wt D)&&\\ \hline
&&&&&\Pp(i_1^*,i_2^*)&\xra{\,\,\,d^{0,0}\,\,\,}&H^0(Z)\Lambda(l)dl\\ \hline
\hline
\multicolumn{1}{c}{\text{\tiny{$r=-2$}}}&&&\multicolumn{1}{c}{\text{\tiny{$r=-1$}}}&&\multicolumn{1}{c}{\text{\tiny{$r=0$}}}&&\multicolumn{1}{c}{\text{\tiny{$r=-1$}}}
\end{array}
\def\arraystretch{1.6}
\begin{array}{c}
\text{ \tiny{$s=4\,\,$}}\\
\text{ \tiny{$s=3\,\,$}}\\
\text{ \tiny{$s=2\,\,$}}\\
\text{ \tiny{$s=1\,\,$}}\\
\text{ \tiny{$s=0\,\,$}}\\\\
\end{array}
$}
\end{equation*}
where $\Pp(i_1^*,i_2^*)$ is the vector space given by the pull-back
$$
\xymatrix{
\pb\ar[d]
\Pp(i_1^*,i_2^*)\ar[r]&H^0(Z)\Lambda(l)\ar[d]^{(\delta_0,\delta_1)}\\
H^{0}(\wt D)\ar[r]^-{(i_1^*,i_2^*)}&H^{0}(Z)\times H^0(Z)}
$$
and the differential is given by
$$\left\{
\def\arraystretch{1.6}
\begin{array}{l}
d^{0,0}(a,z(l))= z'(l)dl,\\
d^{-1,2}((a,z(l)),w(l))=(j_{\#}(a)+\eta_1(w(0))-\eta_2(w(1)),z'(l)dl,w'(l)dl),\\
d^{-2,4}(z(l))=(\eta_1(z(0))-\eta_2(z(1)),z'(l)dl).
\end{array}\right.$$

We describe the morphism $\iota^*:E_1(X_{reg})\lra E_1(L)$ induced by the inclusion $\iota:L\hookrightarrow X_{reg}$.
\begin{itemize}
 \item [\jb]  In degree $s=0$, the map $H^0(\wt X)\lra \Pp(i_1^*,i_2^*)$ is given by $x\mapsto (j^*(x),i_1^*\circ j^*(x))=(j^*(x),i_2^*\circ j^*(x))$.
 \item [\jb]  In degree $s=1$ we have the restriction morphism $j^*:H^1(\wt X)\lra H^1(\wt D)$.

 \item [\jb]  In degree $s=2$, the map $H^0(\wt D)\lra \Pp(i_1^*,i_2^*)\oplus H^0(Z)\Lambda(l)$ is given by
$a\mapsto ((a,i_1^*(a)(1-t)+i_2^*(a)t), 0)$ and
$H^2(\wt X)\lra H^2(\wt D)\oplus H^0(Z)\Lambda(l)dl\oplus H^0(Z)\Lambda(l)dl$
is given by $b\mapsto (j^*(b),-i^*(b), 0)$.
 \item [\jb]  In degree $s=3$ we have the identity of $H^1(\wt D)$.
 \item [\jb]  In degree $s=4$ the maps $H^0(Z)\to H^0(Z)\Lambda(l)$
and $H^2(\wt D)\to H^2(\wt D)\oplus H^0(Z)\Lambda(l)dl$
are given by the natural inclusions $z\mapsto z$ and $a\mapsto (a,0)$ respectively. 
\end{itemize}

\subsection{Intersection cohomology}
By semi-purity, the weight filtration on $H^k(X_{reg})$ is pure of weight $k$, for $k=0,1$.
Also, the weights on $H^k(L)$ are less than or equal to $k$ for $k=0,1$,
and greater or equal to $k+1$ for $k\geq 2$. The 
second terms $E^{r,s}_{2}(X_{reg})\cong Gr_s^WH^{r+s}(X_{reg})$
and  $E^{r,s}_{2}(L)\cong Gr_s^WH^{r+s}(L)$
of the weight spectral sequences for $X_{reg}$ and $L$ respectively are:

\begin{equation*}
\resizebox{1\hsize}{!}{$
E_2^{r,s}(X_{reg})\cong
\def\arraystretch{1.6}
\begin{array}{| c | c | c |}
\hline
\Ker(\eta)&\Ker(\gamma^4)/\Img(\eta)&0\\\hline
\gris&\Ker(\gamma^3)&\Coker(\gamma^3)\\\hline
\gris&0&\Coker(\gamma^2)\\\hline
\gris&\gris&H^1(\wt X)\\\hline
\gris&\gris&H^0(\wt X)\\\hline\hline
\multicolumn{1}{c}{\text{\tiny{$r=-2$}}}&\multicolumn{1}{c}{\text{\tiny{$r=-1$}}}&\multicolumn{1}{c}{\text{\tiny{$r=0$}}}
\end{array}\,\,\,;\,\,\,
E_2^{r,s}(L)\cong
\def\arraystretch{1.6}
\begin{array}{| c | c | c | c |}
\hline
\Ker(\eta)&\Coker(\eta)&\gris&\gris\\\hline
\gris&H^1(\wt D)&\gris&\gris\\\hline
\gris&0&0&\gris\\\hline
\gris&\gris&H^1(\wt D)&\gris\\\hline
\gris&\gris&\Ker(i^*)&\Coker(i^*)\\\hline\hline
\multicolumn{1}{c}{\text{\tiny{$r=-2$}}}&\multicolumn{1}{c}{\text{\tiny{$r=-1$}}}&\multicolumn{1}{c}{\text{\tiny{$r=0$}}}&\multicolumn{1}{c}{\text{\tiny{$r=1$}}}
\end{array} 
\def\arraystretch{1.6}
\begin{array}{c}
\text{\tiny{$s=4$}}\\
\text{\tiny{$s=3$}}\\
\text{\tiny{$s=2$}}\\
\text{\tiny{$s=1$}}\\
\text{\tiny{$s=0$}}\\
\\
\end{array} 
$}\end{equation*}

The second term $IE^{r,s}_{2,\ov p}(X)\cong Gr_s^WIH^{r+s}_{\ov p}(X)$ of the perverse weight spectral sequence for $X$ is:
\begin{equation*}
\resizebox{1\hsize}{!}{$
\def\arraystretch{1.6}
\begin{array}{| c | c | c |}
\multicolumn{1}{c}{}&\multicolumn{1}{c}{\text{\tiny{$\ov p=\ov 0$}}}&\multicolumn{1}{c}{}\\
\hline
H^4(\wt X)&\gris&\gris\\\hline
H^3(\wt X)&\gris&\gris\\\hline
\Ker(j^2)&0&\gris\\\hline
\Ker(j^1)&\Coker(j^1)&\gris\\\hline
H^0(\wt X)&0&\Coker(i^*)\\\hline\hline
\multicolumn{1}{c}{\text{\tiny{$r=0$}}}&\multicolumn{1}{c}{\text{\tiny{$r=1$}}}&\multicolumn{1}{c}{\text{\tiny{$r=2$}}}
\end{array}\,\,\,;\,\,\,
\def\arraystretch{1.6}
\begin{array}{| c   |}
\multicolumn{1}{c}{\text{\tiny{$\ov p=\ov 1$}}}\\
\hline
H^4(\wt X)\\\hline
H^3(\wt X)\\\hline
\Ker(j^2)\\\hline
H^1(\wt X)\\\hline
H^0(\wt X)\\\hline\hline
\multicolumn{1}{c}{\text{\tiny{$r=0$}}}
\end{array}
\,\,\,;\,\,\,
\def\arraystretch{1.6}
\begin{array}{| c | c | c |}
\multicolumn{1}{c}{}&\multicolumn{1}{c}{\text{\tiny{$\ov p=\ov 2$}}}&\multicolumn{1}{c}{}\\
\hline
\Ker(\eta)&0&H^4(\wt X)\\\hline
\gris&\Ker(\gamma^3)&\Coker(\gamma^3)\\\hline
\gris&0&\Coker(\gamma^2)\\\hline
\gris&\gris&H^1(\wt X)\\\hline
\gris&\gris&H^0(\wt X)\\\hline\hline
\multicolumn{1}{c}{\text{\tiny{$r=-2$}}}&\multicolumn{1}{c}{\text{\tiny{$r=-1$}}}&\multicolumn{1}{c}{\text{\tiny{$r=0$}}}
\end{array}
\,\,\,;\,\,\,
\def\arraystretch{1.6}
\begin{array}{| c | c | c |}
\multicolumn{1}{c}{}&\multicolumn{1}{c}{\text{\tiny{$\ov p=\ov \infty$}}}&\multicolumn{1}{c}{}\\
\hline
\Ker(\eta)&\Ker(\gamma^4)/\Img(\eta)&0\\\hline
\gris&\Ker(\gamma^3)&\Coker(\gamma^3)\\\hline
\gris&0&\Coker(\gamma^2)\\\hline
\gris&\gris&H^1(\wt X)\\\hline
\gris&\gris&H^0(\wt X)\\\hline\hline
\multicolumn{1}{c}{\text{\tiny{$r=-2$}}}&\multicolumn{1}{c}{\text{\tiny{$r=-1$}}}&\multicolumn{1}{c}{\text{\tiny{$r=0$}}}
\end{array}
\def\arraystretch{1.6}
\begin{array}{c}
\text{\tiny{$s=4$}}\\
\text{\tiny{$s=3$}}\\
\text{\tiny{$s=2$}}\\
\text{\tiny{$s=1$}}\\
\text{\tiny{$s=0$}}\\
\end{array} 
$}
\end{equation*}
Note that except for $IH_{\ov 0}^2(X)$ and $IH_{\ov 2}^2(X)$,
which may have weights $(0,1,2)$ and $(2,3,4)$ respectively,
the weight filtration on $IH_{\ov p}^s(X)$ is pure of weight $s$.

\subsection{Intersection formality} We prove the main result of this section.
\begin{teo}
Let $X$ be a projective surface with only isolated singularities. Then $X$ is GM-intersection-formal over $\CC$.
Furthermore, if $X$ has only one singular point then $X$ is
intersection-formal over $\CC$.
\end{teo}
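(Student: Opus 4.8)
The plan is to follow the strategy of the proof of Theorem~\ref{intersformal}. By Theorem~\ref{IE1formality} there is a string of quasi-isomorphisms of perverse cdga's from $I\Aa_{\ov\bullet}(X)\otimes\CC$ to $IE_{1,\ov\bullet}(X)\otimes\CC$, and $IE_{2,\ov\bullet}(X)=H^*(IE_{1,\ov\bullet}(X))\cong IH^*_{\ov\bullet}(X;\QQ)$. Hence it suffices to produce a perverse cdga $M_{\ov\bullet}$ together with quasi-isomorphisms of perverse cdga's
$$IE_{1,\ov\bullet}(X)\xleftarrow{\varphi_{\ov\bullet}}M_{\ov\bullet}\xrightarrow{\psi_{\ov\bullet}}IE_{2,\ov\bullet}(X),$$
with $\varphi_{\ov\bullet}$ injective and $\psi_{\ov\bullet}$ surjective: this exhibits $IE_{1,\ov\bullet}(X)\otimes\CC$ as formal. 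For GM-intersection-formality it is enough that $\psi_{\ov\bullet}$ respect products whenever the target perversity is finite; for full intersection-formality one needs $\psi_{\ov\bullet}$ multiplicative on all of $\widehat\Pp$. The complex $M_{\ov\bullet}$ should have trivial differential except in the single degree $s=2$, where the weight filtration on intersection cohomology may be mixed.

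Since $n=2$, the perversities to treat are $\ov 0$, the middle perversity $\ov m=\ov 1$, the top perversity $\ov t=\ov 2$ and $\ov\infty$, and I would construct $M_{\ov p}$ for each as a sub-bigraded-complex of $IE_{1,\ov p}(X)$, reading off $IE_{1,\ov p}(X)$ and $IE_{2,\ov p}(X)$ from Section~\ref{perversesurface}. For $\ov p=\ov 1$ the weight filtration on $IH^k_{\ov 1}(X)$ is pure of weight $k$ for all $k$, so $M_{\ov 1}$ is taken with trivial differential, concentrated in $r=0$, with terms $H^0(\wt X)$, $H^1(\wt X)$, $\Ker(j^2)$, $H^3(\wt X)$, $H^4(\wt X)$. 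For $\ov p\in\{\ov 0,\ov 2,\ov\infty\}$ the only degree where the weight is mixed is $s=2$ (weights $(0,1,2)$ when $\ov p=\ov 0$; weights $(2,3,4)$ when $\ov p=\ov 2,\ov\infty$), so there $M_{\ov p}$ carries a small nonzero differential modelled on a splitting of the three-term complex $H^0(Z)\xrightarrow{\eta}H^2(\wt D)\xrightarrow{\gamma^4}H^4(\wt X)$ in the weight spectral sequence of $X_{reg}$, together with the combinatorial restriction data of the link. As in the ordinary case, matching the cohomology of these truncations with $IE_{2,\ov p}(X)$ uses the Poincar\'{e} duality identifications analogous to Lemma~\ref{PD}, and the auxiliary choices of complements are seen to be irrelevant up to quasi-isomorphism. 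I would also check that the inclusions $M_{\ov 0}\hookrightarrow M_{\ov 1}\hookrightarrow M_{\ov 2}\hookrightarrow M_{\ov\infty}$ are compatible with the poset maps of $IE_{1,\ov\bullet}(X)$, so that $M_{\ov\bullet}$ is a perverse complex.

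Next I would equip $M_{\ov\bullet}$ with the product inherited from $IE_{1,\ov\bullet}(X)$ via $\varphi_{\ov\bullet}$ and verify $M_{\ov p}\cdot M_{\ov q}\subseteq M_{\ov p+\ov q}$ for all $\ov p,\ov q\in\widehat\Pp$; as in the proof of Theorem~\ref{intersformal}, this reduces to the facts that products of positive-degree classes land in $\Ker(j^\bullet)$-type subspaces, that the products of the ``dual'' classes vanish by Poincar\'{e} duality and the algebra structure of $IE_{2,\ov\bullet}(X)$, and that products with $H^0(\wt X)$ respect the chosen splittings. This makes $\varphi_{\ov\bullet}$ a quasi-isomorphism of perverse cdga's. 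Then I would check $\psi_{\ov\bullet}$ is multiplicative for finite perversities, reducing to a short list of diagrams for products landing in $IH^4$ and in the mixed-weight groups $IH^2_{\ov 0}(X)$ and $IH^2_{\ov 2}(X)$; this gives GM-intersection-formality over $\CC$. Finally, when $\Sigma=\{\sigma\}$ is a single point the dual graph of the exceptional divisor is connected, so $\Img(\eta)=\Ker(\gamma^4)$ and the top-weight group $\Ker(\gamma^4)/\Img(\eta)\cong Gr^W_4 H^3(X_{reg})$ vanishes; this is the only obstruction to extending multiplicativity of $\psi_{\ov\bullet}$ to the infinite perversity, so $X$ is intersection-formal over $\CC$.

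The step I expect to be the main obstacle is the construction of $M_{\ov p}$ in the degree $s=2$ column. Unlike the ordinary-singularity case of Theorem~\ref{intersformal}, where $D$ is smooth and the weight spectral sequences have only two columns, here $D$ is a simple normal crossings divisor, so the weight spectral sequences of $X_{reg}$ and of $L$ each gain an extra column coming from the double-point set $Z$ and the pull-back algebra $\Pp(i_1^*,i_2^*)$ of the link. Arranging $M_{\ov p}$ in that column to simultaneously compute the correct (mixed-weight) intersection cohomology, be closed under the perverse products, and be compatible across perversities is where the real work lies; the rest is a mechanical adaptation of the ordinary case.
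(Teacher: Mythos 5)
Your proposal follows the paper's proof of this theorem essentially step for step: reduce via Theorem~\ref{IE1formality} to showing $IE_{1,\ov\bullet}(X)\otimes\CC$ is formal, build a perverse sub-cdga $M_{\ov\bullet}\hookrightarrow IE_{1,\ov\bullet}(X)$ together with a surjective quasi-isomorphism onto $IE_{2,\ov\bullet}(X)$ for the perversities $\ov 0,\ov 1,\ov 2,\ov\infty$, check multiplicativity of $\psi_{\ov\bullet}$ for finite perversities, and observe that $\Ker(\gamma^4)/\Img(\eta)=0$ removes the only obstruction at $\ov\infty$ when there is a single singular point. One indexing slip worth correcting: in the actual construction the nontrivial differentials of $M_{\ov\bullet}$ sit in the weight-rows $s=0,1$ (the contractible $(t-1)\mapsto dt$ pairs needed for the poset maps and products to close), $s=3$ (the map $\gamma^3:H^1(\wt D)\to H^3(\wt X)$) and $s=4$ (the three-term complex built from $H^0(Z)$, $H^2(\wt D)$ and $H^4(\wt X)$), while the row $s=2$ reduces to $\Ker(j^2)$ with zero differential --- the mixed weights of $IH^2_{\ov 0}$ and $IH^2_{\ov 2}$ are spread along the anti-diagonal $r+s=2$, not along the row $s=2$.
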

\begin{proof}
By Theorem $\ref{IE1formality}$ there is a string of quasi-isomorphisms of 
perverse cdga's from $I\Aa_{\ov\bullet}(X)\otimes\CC$ to
$IE_{1,\ov\bullet}(X)\otimes\CC$. Furthermore, we have $IE_{2,\ov \bullet}(X)\cong IH_{\ov\bullet}(X;\QQ)$.
We will next define a perverse cdga $M_{\ov\bullet}$ together with quasi-isomorphisms
$IE_{1,\ov{\bullet}}(X)\stackrel{\sim}{\longleftarrow} M_{\ov{\bullet}}\stackrel{\sim}{\lra} IE_{2,\ov{\bullet}}(X).$

Let $M_{\ov 0}^{*,*}$ be the bigraded complex with trivial differential given by
\begin{equation*}
M_{\ov 0}^{r,s}=
\def\arraystretch{1.6}
\begin{array}{| c | c | c |}
\hline
H^4(\wt X)&\gris&\gris\\\hline
H^3(\wt X)&\gris&\gris\\\hline
\Ker(j^2)&0&\gris\\\hline
\Ker(j^1)&\Ker(\gamma^3)^\du \otimes dt&\gris\\\hline
H^0(\wt X)&0&\Ker(\eta)^\du \otimes  dt\\\hline\hline
\multicolumn{1}{c}{\text{\tiny{$r=0$}}}&\multicolumn{1}{c}{\text{\tiny{$r=1$}}}&\multicolumn{1}{c}{\text{\tiny{$r=2$}}}
\end{array}
\,\,\,\,
\begin{tabular}{ l }
\tiny{$s=4$}\\
\tiny{$s=3$}\\
\tiny{$s=2$}\\
\tiny{$s=1$}\\
\tiny{$s=0$}\\\\
\end{tabular}
\end{equation*}
By Poincar\'{e} duality we have $\Ker(\eta)^\du\cong \Coker(i^*)$ and $\Ker(\gamma^3)^\du\cong \Coker(j^1)$. The assignation $dt\mapsto 1$
defines an isomorphism of bigraded complexes
$\psi_{\ov 0}:M_{\ov 0}\to IE_{2,\ov 0}(X)$.
We next define an injective morphism
$\varphi_{\ov 0}:M_{\ov 0}\lra IE_{1,\ov 0}(X)$. 
To ease notation we will let $E^{r,s}:=E_1^{r,s}(L)$ and $d^{r,s}:=d_1^{r,s}(L)$.
The $\ov 0$-perversity weight spectral sequence $IE_{1,\ov 0}(X)$ for $X$ is:
\begin{equation*}\resizebox{1\hsize}{!}{$
\def\arraystretch{2}
\begin{array}{| c | c | c | c | c |}
\hline
H^0(Z)\wh\oplus E^{-2,4}\Lambda(t)\otimes t&
\left( H^2(\wt D)\wh\oplus E^{-1,4}\Lambda(t)\otimes t\right) \oplus E^{-2,4}\Lambda(t)\otimes dt&
H^4(\wt X)\oplus E^{-1,4}\Lambda(t)\otimes dt&
\gris&\gris\\\hline
\gris&
H^1(\wt D)\wh\oplus E^{-1,3}\Lambda(t)\otimes t&
H^3(\wt X)\oplus E^{-1,3}\Lambda(t)\otimes dt&
\gris&\gris\\\hline
\gris&
H^0(\wt D)\wh\oplus E^{-1,2}\Lambda(t)\otimes t&
\left(H^2(\wt X)\wh\oplus E^{0,2}\Lambda(t)\otimes t\right) \oplus E^{-1,2}\Lambda(t)\otimes dt&
E^{0,2}\Lambda(t)dt&
\gris\\\hline
\gris&\gris&
H^1(\wt X)\wh\oplus E^{0,1}\Lambda(t)\otimes t&
E^{0,1}\Lambda(t)\otimes dt&
\gris\\\hline
\gris&\gris&
H^0(\wt X)\wh\oplus \left(\Ker(d^{0,0})\oplus E^{0,0}\Lambda(t)\otimes t\right)&
E^{1,0}\Lambda(t)\otimes t\otimes (t-1)\oplus E^{0,0}\Lambda(t)\otimes dt&
E^{1,0}\Lambda(t)\otimes dt\\\hline
\multicolumn{1}{c}{r=-2}&\multicolumn{1}{c}{r=-1}&\multicolumn{1}{c}{r=0}&\multicolumn{1}{c}{r=1}&\multicolumn{1}{c}{r=2}\end{array}
$}\end{equation*}

For $s\geq 0$, define an injective morphism $M_{\ov 0}^{0,s}\lra IE_{1,\ov 0}^{0,s}(X)$
by letting $x\mapsto (x,\iota^*(x))$. This is well-defined, since $\iota^*(H^0(\wt X))\subset \Ker(d^{0,0})$
and $\iota^*(\Ker(j^s))=0$. Since $\Ker(\eta)^\du \subset H^0(Z)^\du\cong H^0(Z)$
and $\Ker(\gamma^3)^\du\subset H^1(\wt D)^\du\cong H^1(\wt D)$ we have injective morphisms 
$\Ker(\eta)^\du\lra E^{1,0}\cong H^0(Z)\otimes\Lambda(l)$ and $\Ker(\gamma^3)^\du\lra E^{0,1}\cong H^0(\wt D)$.
We have quasi-isomorphisms
$IE_{1,\ov0}(X)\stackrel{\sim}{\longleftarrow}M_{\ov0}\stackrel{\sim}{\lra}IE_{2,\ov0}(X).$

The middle-perversity $\ov m=\ov 1$ weight spectral sequence $IE_{1,\ov 1}(X)$ for $X$ is:
\begin{equation*}\resizebox{1\hsize}{!}{$
\def\arraystretch{2}
\begin{array}{| c | c | c | c | c |}
\hline
H^0(Z)\wh\oplus E^{-2,4}\Lambda(t)\otimes t&
\left( H^2(\wt D)\wh\oplus E^{-1,4}\Lambda(t)\otimes t\right) \oplus E^{-2,4}\Lambda(t)\otimes dt&
H^4(\wt X)\oplus E^{-1,4}\Lambda(t)\otimes dt&
\gris&\gris\\\hline
\gris&
H^1(\wt D)\wh\oplus E^{-1,3}\Lambda(t)\otimes t&
H^3(\wt X)\oplus E^{-1,3}\Lambda(t)\otimes dt&
\gris&\gris\\\hline
\gris&
H^0(\wt D)\wh\oplus E^{-1,2}\Lambda(t)\otimes t&
\left(H^2(\wt X)\wh\oplus E^{0,2}\Lambda(t)\otimes t\right) \oplus E^{-1,2}\Lambda(t)\otimes dt&
E^{0,2}\Lambda(t)dt&
\gris\\\hline
\gris&\gris&
H^1(\wt X)\wh\oplus E^{0,1}\Lambda(t)&
E^{0,1}\Lambda(t)\otimes dt&
\gris\\\hline
\gris&\gris&
H^0(\wt X)\wh\oplus E^{0,0}\Lambda(t)&
E^{1,0}\Lambda(t)\otimes (t-1)\oplus E^{0,0}\Lambda(t)\otimes dt&
E^{1,0}\Lambda(t)\otimes dt\\\hline
\multicolumn{1}{c}{r=-2}&\multicolumn{1}{c}{r=-1}&\multicolumn{1}{c}{r=0}&\multicolumn{1}{c}{r=1}&\multicolumn{1}{c}{r=2}
\end{array}
$}\end{equation*}

Let $M_{\ov 1}$ be the bigraded sub-complex of $IE_{1,\ov 1}(X)$ given by:
\begin{equation*}
M_{\ov 1}=
\def\arraystretch{1.6}
\begin{array}{| c | c | c |}
\hline
H^4(\wt X)&\gris&\gris\\\hline
H^3(\wt X)&\gris&\gris\\\hline
\Ker(j^2)&0&\gris\\\hline
H^1(\wt X)\oplus \Ker(\gamma^3)^\du\otimes (t-1)&\Ker(\gamma^3)^\du\otimes dt&\gris\\\hline
H^0(\wt X)&\Ker(\eta)^\du\otimes (t-1)&\Ker(\eta)^\du\otimes  dt\\\hline\hline
\multicolumn{1}{c}{\text{\tiny{$r=0$}}}&\multicolumn{1}{c}{\text{\tiny{$r=1$}}}&\multicolumn{1}{c}{\text{\tiny{$r=2$}}}
\end{array}
\end{equation*}
The non-trivial differential of $M_{\ov 1}$ is given by $(t-1)\mapsto dt$.
The assignations $\Ker(\gamma^3)^\du\mapsto 0$ and $\Ker(\eta)^\du\mapsto 0$ define a surjective morphism of complexes $\psi_{\ov 1}:M_{\ov 1}\to IE_{2,\ov 1}(X)$.
Hence we have quasi-isomorphisms
$IE_{1,\ov 1}(X)\stackrel{\sim}{\longleftarrow}M_{\ov1}\stackrel{\sim}{\lra}IE_{2,\ov1}(X)$
compatible with the inclusion $M_{\ov 0}\lra M_{\ov 1}$.

The top-perversity $\ov t=\ov 2$ weight spectral sequence $IE_{1,\ov 2}(X)$ for $X$ is:
\begin{equation*}\resizebox{1\hsize}{!}{$
\def\arraystretch{2}
\begin{array}{| c | c | c | c | c |}
\hline
H^0(Z)\wh\oplus E^{-2,4}\Lambda(t)\otimes t&
\left( H^2(\wt D)\wh\oplus E^{-1,4}\Lambda(t)\otimes t\right) \oplus E^{-2,4}\Lambda(t)\otimes dt&
H^4(\wt X)\oplus E^{-1,4}\Lambda(t)\otimes dt&
\gris&\gris\\\hline
\gris&
H^1(\wt D)\wh\oplus E^{-1,3}\Lambda(t)&
H^3(\wt X)\oplus E^{-1,3}\Lambda(t)\otimes dt&
\gris&\gris\\\hline
\gris&
H^0(\wt D)\wh\oplus E^{-1,2}\Lambda(t)&
\left(H^2(\wt X)\wh\oplus E^{0,2}\Lambda(t)\right) \oplus E^{-1,2}\Lambda(t)\otimes dt&
E^{0,2}\Lambda(t)dt&
\gris\\\hline
\gris&\gris&
H^1(\wt X)\wh\oplus E^{0,1}\Lambda(t)&
E^{0,1}\Lambda(t)\otimes dt&
\gris\\\hline
\gris&\gris&
H^0(\wt X)\wh\oplus E^{0,0}\Lambda(t)&
E^{1,0}\Lambda(t)\otimes (t-1)\oplus E^{0,0}\Lambda(t)\otimes dt&
E^{1,0}\Lambda(t)\otimes dt\\\hline
\multicolumn{1}{c}{r=-2}&\multicolumn{1}{c}{r=-1}&\multicolumn{1}{c}{r=0}&\multicolumn{1}{c}{r=1}&\multicolumn{1}{c}{r=2}
\end{array}
$}\end{equation*}

Let $M_{\ov 2}$ be the bigraded sub-complex of $IE_{1,\ov 2}(X)$ given by:
\begin{equation*}
M_{\ov 2}=
\def\arraystretch{1.6}
\begin{array}{| c | c | c | c | c |}
\hline
\Ker(\eta)^\du&H^2(\wt D)\otimes t&H^4(\wt X)\oplus H^2(\wt D)\otimes dt&\gris&\gris\\\hline
\gris&H^1(\wt D)&H^3(\wt X)&\gris&\gris\\\hline
\gris&0&\Ker(j^2)&0&\gris\\\hline
\gris&\gris&H^1(\wt X)\oplus \Ker(\gamma^3)^\du\otimes (t-1)&\Ker(\gamma^3)^\du\otimes  dt&\gris\\\hline
\,\,\,\,\,\,\,\,\,\,\,\,\gris&\,\,\,\,\,\,\,\,\,\,\,\,\gris&H^0(\wt X)&\Ker(\eta)^\du\otimes (t-1)&\Ker(\eta)^\du\otimes dt\\\hline\hline
\multicolumn{1}{c}{\text{\tiny{$r=-2$}}}&\multicolumn{1}{c}{\text{\tiny{$r=-1$}}}&\multicolumn{1}{c}{\text{\tiny{$r=0$}}}&\multicolumn{1}{c}{\text{\tiny{$r=1$}}}&\multicolumn{1}{c}{\text{\tiny{$r=2$}}}
\end{array}
\end{equation*}
The non-trivial differentials of $M_{\ov 2}$ are given by the map
$H^2(\wt D)\otimes t\lra H^4(\wt X)\oplus H^2(\wt D)\otimes dt$ in degree $s=4$ defined by $a\cdot t\mapsto (\gamma^4(a),a\cdot dt)$,
the Gysin map
$\gamma^3:H^1(\wt D)\to H^3(\wt X)$ in degree $s=3$ and the map $(t-1)\mapsto dt$ defined by differentiation with respect to $t$.
We next define a morphism of complexes $\psi_{\ov 2}:M_{\ov 2}\to IE_{2,\ov 2}(X)$.
In degrees $s=0$ and $s=1$ we let $\psi_{\ov 2}=\psi_{\ov 1}$.
In degree $s=2$ we have a Poincar\'{e} duality isomorphism $\Ker(j^2)\cong \Coker(\gamma^2)$.
For $s=3$ choose a direct sum decomposition $H^1(\wt D)\cong \Ker(\gamma^3)\oplus C_1$
and consider the projection $H^1(\wt D)\to \Ker(\gamma^3)$ to the first component.
This gives a commutative diagram
$$
\xymatrix{
M_{\ov 2}^{*,3}=&H^1(\wt D)\ar[r]^-{\gamma^3}\ar@{->>}[d]&\ar@{->>}[d]H^3(\wt X)&\\
IE_{2,\ov 2}^{*,3}(X)=&\Ker(\gamma^3)\ar[r]^-0&\Coker(\gamma^3)
}
$$
For $s=4$ we have a commutative diagram
$$
\xymatrix{
M_{\ov 2}^{*,4}=&\Ker(\eta)\ar[r]^-{0}\ar[d]^{Id}&\ar[d]H^2(\wt D)\otimes t\ar[r]^-{d}&H^4(\wt X)\oplus H^2(\wt D)\otimes dt\ar@{->>}[d]^\pi\\
IE_{2,\ov 2}^{*,4}(X)=&\Ker(\eta)\ar[r]&0\ar[r]&T
}
$$
where $d(a\cdot t)=(\gamma^4(a),a\cdot dt)$ and $\pi(x,a\cdot dt)=\gamma^4(a)-x$.
Hence we have quasi-isomorphisms of complexes
$IE_{1,\ov 2}(X)\stackrel{\sim}{\longleftarrow}M_{\ov2}\stackrel{\sim}{\lra}IE_{2,\ov2}(X)$
compatible with the inclusion $M_{\ov 1}\lra M_{\ov 2}$.

The $\ov \infty$-perversity weight spectral sequence $IE_{1,\ov \infty}(X)$ for $X$ is:
\begin{equation*}
\resizebox{1\hsize}{!}{$
\def\arraystretch{2}
\begin{array}{| c | c | c | c | c |}
\hline
H^0(Z)\Lambda(t)&
H^2(\wt D)\Lambda(t)\oplus H^0(Z)\Lambda(t)dt&
H^4(\wt X)\oplus H^2(\wt D)\Lambda(t)dt&
\gris&\gris\\\hline
\gris&
H^1(\wt D)\Lambda(t)&
H^3(\wt X)\oplus H^1(\wt D)\Lambda(t)dt&
\gris&\gris\\\hline
\gris&
H^0(\wt D)\Lambda(t)&
H^2(\wt X)\oplus_{H^2(\wt D)}\left(H^2(\wt D)\Lambda(t)\oplus H^0(\wt D)\Lambda(t)dt\right)&
H^2(\wt D)\Lambda(t)dt&
\gris\\\hline
\gris&\gris&
H^1(\wt X)\oplus_{H^1(\wt D)}H^1(\wt D)\Lambda(t)&
H^1(\wt D)\Lambda(t)dt&
\gris\\\hline
\gris&\gris&
H^0(\wt X)\oplus_{H^0(\wt D)}H^0(\wt D)\Lambda(t)t&
H^0(Z)\Lambda(t)(t-1)\oplus H^0(\wt D)\Lambda(t)dt&
H^0(Z)\Lambda(t)dt\\\hline
\multicolumn{1}{c}{\text{\tiny{-2}}}&\multicolumn{1}{c}{\text{\tiny{-1}}}&\multicolumn{1}{c}{\text{\tiny{0}}}&\multicolumn{1}{c}{\text{\tiny{1}}}&\multicolumn{1}{c}{\text{\tiny{2}}}
\end{array}$}
\end{equation*}

Let $M_{\ov \infty}$ be the bigraded sub-complex of $IE_{1,\ov \infty}(X)$ given by:
\begin{equation*}
M_{\ov \infty}=
\def\arraystretch{1.6}
\begin{array}{| c | c | c | c | c |}
\hline
H^0(Z)&H^2(\wt D)\oplus H^2(\wt D)\otimes t&H^4(\wt X)\oplus H^2(\wt D)\otimes dt&\gris&\gris\\\hline
\gris&H^1(\wt D)&H^3(\wt X)&\gris&\gris\\\hline
\gris&0&\Ker(j^2)&0&\gris\\\hline
\gris&\gris&H^1(\wt X)\oplus \Ker(\gamma^3)^\du\otimes (t-1)&\Ker(\gamma^3)^\du\otimes  dt&\gris\\\hline
\,\,\,\,\,\,\,\,\,\,\,\,\gris&\,\,\,\,\,\,\,\,\,\,\,\,\gris&H^0(\wt X)&\Ker(\eta)^\du\otimes (t-1)&\Ker(\eta)^\du\otimes dt\\\hline\hline
\multicolumn{1}{c}{\text{\tiny{$r=-2$}}}&\multicolumn{1}{c}{\text{\tiny{$r=-1$}}}&\multicolumn{1}{c}{\text{\tiny{$r=0$}}}&\multicolumn{1}{c}{\text{\tiny{$r=1$}}}&\multicolumn{1}{c}{\text{\tiny{$r=2$}}}
\end{array}
\end{equation*}
Note that for $s<4$ we have $M_{\ov\infty}^{*,s}=M_{\ov t}^{*,s}$.
In degree $s=4$, the differential of $M_{\ov\infty}$ is given by the map
$H^0(Z)\to H^2(\wt D)\oplus H^2(\wt D)\otimes t$ defined by
$z\mapsto (\eta(z),0)$ and the map $H^2(\wt D)\oplus H^2(\wt D)\otimes t\to H^4(\wt X)\to H^2(\wt D)\otimes dt$
defined by $(a,b\cdot t)\mapsto (\gamma^4(a)+\gamma^4(b),b\cdot dt)$.
To define a surjective morphism of complexes $\psi_{\ov \infty}:M_{\ov \infty}\to IE_{2,\ov \infty}(X)$ it suffices to define
$\psi_{\ov \infty}:M^{*,4}_{\ov \infty}\to IE^{*,4}_{2,\ov \infty}(X)$.
Choose a decomposition $H^0(Z)\cong \Ker(\eta)\oplus C_0$ and
consider the projection $H^0(Z)\to \Ker(\eta)$ to the first component. Also,
choose a decomposition
$H^2(\wt D)\cong \Ker(\gamma^4)\oplus C_2$ and consider the composition
$\rho:H^2(\wt D)\twoheadrightarrow \Ker(\gamma^4)\twoheadrightarrow \Ker(\gamma^4)/\Img(\eta^4)$.
This gives a commutative diagram
$$
\xymatrix{
M_{\ov \infty}^{*,4}=&H^0(Z)\ar[r]^-{d}
\ar@{->>}[d]&\ar[d]^{(\rho,0)}H^2(\wt D)\oplus H^2(\wt D)\otimes t\ar[r]^-{d}&H^4(\wt X)\oplus H^2(\wt D)\otimes dt\ar@{->>}[d]\\
IE_{2,\ov \infty}^{*,4}(X)=&\Ker(\eta)\ar[r]^-0&\Ker(\gamma^4)/\Img(\eta^4)\ar[r]&0
}.
$$
Hence we have
quasi-isomorphisms of complexes
$IE_{1,\ov \infty}(X)\stackrel{\sim}{\longleftarrow}M_{\ov\infty}\stackrel{\sim}{\lra}IE_{2,\ov\infty}(X)$
compatible with the inclusion $M_{\ov 2}\lra M_{\ov \infty}$.

Consider on $M_{\ov\bullet}$ the multiplicative structure induced by the inclusion 
inclusion $\varphi_{\ov\bullet}:M_{\ov\bullet}\to IE_{1,\ov\bullet}(X)$. It is a matter of verification to
see that this structure is closed in $M_{\ov\bullet}$, so that $\varphi_{\ov\bullet}$
 is a morphism of perverse cdga's, which is a quasi-isomorphism.

We next show that for every pair of perversities $\ov p$ and $\ov q$ such that $\ov p+\ov q<\ov \infty$, the diagram
$$
\xymatrix{
M_{\ov p}\times M_{\ov q}\ar[d]_{(\psi_{\ov p}, \psi_{\ov q})}\ar[r]^-{\mu}&M_{\ov p+\ov q}\ar[d]^-{\psi_{\ov p+\ov q}}\\
IE_{2,\ov p}(X)\times IE_{2,\ov q}(X)\ar[r]^-{\ov \mu}&IE_{2,\ov p+\ov q}(X)
}
$$
commutes, so that $\psi_{\ov\bullet}$ is multiplicative for finite perversities. The only non-trivial case is
$$
\xymatrix{
H^1(\wt D)\times \Ker(\gamma^3)^\du\ar[d]_-{(\psi_{\ov 2},\psi_{\ov 0})}\ar[r]^-{\mu}&H^2(\wt D)\ar[d]^-{\psi_{\ov 2}}\\
\Ker(\gamma^3)\times \Ker(\gamma^3)^\du\ar[r]^-{\ov \mu}&H^4(\wt X)
}.
$$
Let $(a,b)\in H^1(\wt D)\times \Ker(\gamma^3)^\du$. Then $\psi_{\ov 2}\mu(a,b)=\gamma^4(a\cdot b)$.
Let $a=\ov a+c$ be a decomposition of $a$ such that $a\in \Ker(\gamma^3)$ and $c\in C_1$.
Then $\ov \mu(\psi_{\ov 2}(a),\psi_{\ov 0}(b))=\gamma^4(\ov a\cdot b)$. Hence to prove that the above diagram commutes it suffices to show that $\gamma^4(c\cdot b)=0$.
This follows from the fact that $\Ker(\gamma^3)\cap C_1=\{0\}$.
This proves that $X$ is GM-intersection-formal over $\CC$.

Assume now that $X$ has only one isolated singularity. Then $\Ker(\gamma^4)/\Img(\eta^4)=0$ and $X$ is
intersection-formal over $\CC$.
\end{proof}

\subsection{An example}
We end with an example of a projective surface with an 
isolated singularity and non-trivial weight filtration on its intersection cohomology.

\begin{example}[Cusp singularity]
Let $C$ be a nodal cubic curve in $\CC\PP^2$. Choose a smooth plane quartic $C'$
intersecting $C$ transversally, so that $|C\cap C'|=12$.
Consider the blow-up $\wt X=Bl_{C\cap C'}\CC\PP^2$ of $\CC\PP^2$ at the $12$ points of $C\cap C'$.
Then the proper transform $\wt C$ of $C$ has negative self-intersection, and we may consider the blow-down $X$ of $\wt C$ to a point. 
Then $X$ is a projective surface with a normal isolated singularity (see $\S7$ of \cite{ToChow},
see also Example 4.2 of \cite{ChCi1} for a more general construction).
To make $\wt C$ into a simple normal crossings divisor
we blow-up $2$ further times at the node of $\wt C$.
This gives a resolution $f:Y\to X$ where $Y\simeq \#_{15} \CC\PP^2$
and the exceptional divisor $D$ is a cycle of three rational curves, so that
$D^{(1)}=\sqcup_{i=1}^3\CC\PP^1$ and $D^{(2)}=\sqcup_{i=1}^3p_i$.
Let $j^s:H^s(Y;\QQ)\lra H^s(D^{(1)};\QQ)$ and $i^*:H^0(D^{(1)};\QQ)\to H^2(D^{(2)};\QQ)$ denote the restriction morphisms.
The rational intersection cohomology of $X$ is given by:
$$
IH^*_{\ov p}(X;\QQ)\cong 
\def\arraystretch{1.4}
\begin{tabular}{| c | }
\multicolumn{1}{c}{\tiny{$\ov{p}=\ov{0}$}}\\
\hline
$\QQ$\\ \hline
0\\ \hline
$\Ker(j^2)\oplus Van$\\ \hline
0\\  \hline
$\QQ$ \\ \hline \hline
\end{tabular}
\,\,\,;\,\,\,
\begin{tabular}{| c |}
\multicolumn{1}{c}{\tiny{$\ov{p}=\ov 1$}}\\
\hline
$\QQ$\\ \hline
0\\ \hline
$\Ker(j^2)$\\ \hline
0\\  \hline
$\QQ$ \\ \hline \hline
\end{tabular}
\,\,\,;\,\,\,
\begin{tabular}{| c | }
\multicolumn{1}{c}{\tiny{$\ov{p}=\ov{2}$}}\\
\hline
$\QQ$\\ \hline
0\\ \hline
$\Ker(j^2)^\vee\oplus Van^\du$\\ \hline
0\\  \hline
$\QQ$ \\ \hline
\end{tabular}
$$
where $Van:=\Coker(i^*)\cong \QQ$ and $\Ker(j^2)\cong \Ker(j^2)^\vee\cong \QQ^{12}$.
The weight filtration on $IH^*_{\ov \bullet}(X;\QQ)$ is non-trivial, with
$Gr^W_2IH^2_{\ov 0}(X;\QQ)\cong \QQ^{12}$, $Gr^W_1IH^2_{\ov 0}(X;\QQ)=0$ and 
$Gr^W_0IH^2_{\ov 0}(X;\QQ)\cong \QQ$.
\end{example}

\bibliographystyle{amsalpha}
\bibliography{bibliografia}

\end{document}